\documentclass[a4paper]{article}

\usepackage{amsfonts}
\usepackage{amsmath}
\usepackage{amssymb}
\usepackage{amsthm}
\usepackage[UKenglish]{babel}
\usepackage{Baskervaldx}
\usepackage{bm}
\usepackage{booktabs}
\usepackage{cite}
\usepackage[T1]{fontenc}
\usepackage{tabularx}
	\newcolumntype{Y}{>{\centering\arraybackslash}X}
\usepackage{tikz}
	\usetikzlibrary{cd}
	\usetikzlibrary{decorations.markings}
\usepackage{url}
\usepackage{xcolor}
	\definecolor{Blue}{HTML}{3d25b9}
	\definecolor{Pink}{HTML}{ec028d}


\usepackage{enumitem}
	\setlist{topsep=0pt,itemsep=0pt}
\usepackage[hang,flushmargin]{footmisc}
\usepackage[top=25mm,bottom=25mm,left=24.8mm,right=24.8mm]{geometry}
\usepackage{microtype}
\usepackage{sectsty}
	\sectionfont{\large}
	\subsectionfont{\normalsize}
	\subsubsectionfont{\normalsize}
\usepackage{titlesec}
	\titlespacing{\section}{0pt}{12pt}{0pt}
	\titlespacing{\subsection}{0pt}{6pt}{0pt}
	\titlespacing{\subsubsection}{0pt}{6pt}{0pt}
\usepackage{titling}
\usepackage[nottoc]{tocbibind} 
\usepackage{tocloft}

\linespread{1.1}

\setlength\parindent{0pt}
\setlength{\parskip}{6pt}
\setlength{\skip\footins}{12pt}
\setlength{\cftbeforetoctitleskip}{4pt}
\setlength{\cftaftertoctitleskip}{4pt}
\setlength{\cftbeforesecskip}{4pt}

\usepackage[colorlinks=true,linkcolor=Blue,citecolor=Blue]{hyperref}
	\hypersetup{breaklinks=true}
\usepackage[capitalise,noabbrev]{cleveref}
	\crefname{equation}{equation}{equations}
	\crefname{conjecture}{Conjecture}{Conjectures}


\theoremstyle{plain}
	\newtheorem{theorem}{Theorem}
	\newtheorem{proposition}[theorem]{Proposition}
	\newtheorem{corollary}[theorem]{Corollary}
	\newtheorem{lemma}[theorem]{Lemma}
	\newtheorem{conjecture}[theorem]{Conjecture}
	\newtheorem{theoremx}{Theorem}
	
	\newtheorem{conjecturex}[theoremx]{Conjecture}
	
	\numberwithin{theorem}{section}
\theoremstyle{definition}
	\newtheorem{definition}[theorem]{Definition}
	\newtheorem{example}[theorem]{Example}
\theoremstyle{remark}
	\newtheorem{remark}[theorem]{Remark}



\newcommand{\dd}{\mathrm{d}}
\newcommand{\h}{\hbar}
\renewcommand{\leq}{\leqslant}
\renewcommand{\geq}{\geqslant}



\newcommand{\UM}{\mathbf{U}}			
\newcommand{\um}{U}					
\DeclareMathOperator{\UW}{Wg^{\UM}}		

\newcommand{\SM}{\mathbf{S}}			
\newcommand{\sm}{S}					
\DeclareMathOperator{\SW}{Wg^{\SM}}		

\newcommand{\SG}{S} 					

\newcommand{\dhn}[2]{\vec{w}^{t}_{#1}(#2)}
\newcommand{\hn}[2]{\vec{h}^{t}_{#1}(#2)}
\newcommand{\dHN}[3]{\vec{W}^t_{#1,#2}(#3)}
\newcommand{\HN}[3]{\vec{H}^t_{#1,#2}(#3)}


\title{Integration on complex Grassmannians, deformed monotone Hurwitz numbers, and interlacing phenomena}
\author{Xavier Coulter \and Norman Do \and Ellena Moskovsky}


\begin{document}

\makeatletter
\textbf{\large \thetitle}

\textbf{\theauthor}
\makeatother

Department of Mathematics, The University of Auckland, Auckland 1142 New Zealand \\
School of Mathematics, Monash University, VIC 3800 Australia \\
School of Mathematics, Monash University, VIC 3800 Australia \\
Email: \href{mailto:xavier.coulter@auckland.ac.nz}{xavier.coulter@auckland.ac.nz}, \href{mailto:norm.do@monash.edu}{norm.do@monash.edu}, \href{mailto:ellena.moskovsky@gmail.com}{ellena.moskovsky@gmail.com}

{\em Abstract.} We introduce a family of polynomials, which arise in three distinct ways: in the large $N$ expansion of a matrix integral, as a weighted enumeration of factorisations of permutations, and via the topological recursion. More explicitly, we interpret the complex Grassmannian $\mathrm{Gr}(M,N)$ as the space of $N \times N$ idempotent Hermitian matrices of rank $M$ and develop a Weingarten calculus to integrate products of matrix elements over it. In the regime of large $N$ and fixed ratio $\frac{M}{N}$, such integrals have expansions whose coefficients count factorisations of permutations into monotone sequences of transpositions, with each sequence weighted by a monomial in $t = 1 - \frac{N}{M}$. This gives rise to the desired polynomials, which specialise to the monotone Hurwitz numbers when $t = 1$.

These so-called deformed monotone Hurwitz numbers satisfy a cut-and-join recursion, a one-point recursion, and the topological recursion. Furthermore, we conjecture on the basis of overwhelming empirical evidence that the deformed monotone Hurwitz numbers are real-rooted polynomials whose roots satisfy remarkable interlacing phenomena.

An outcome of our work is the viewpoint that the topological recursion can be used to ``topologise'' sequences of polynomials, and we claim that the resulting families of polynomials may possess interesting properties. As a further case study, we consider a weighted enumeration of dessins d'enfant and conjecture that the resulting polynomials are also real-rooted and satisfy analogous interlacing properties.

\emph{Acknowledgements.} X.C. was supported by a University of Auckland Doctoral Scholarship. N.D. was supported by the Australian Research Council grant DP180103891. E.M. was supported by an Australian Government Research Training Program (RTP) Scholarship and a Monash University Postgraduate Publication Award. N.D. would like to thank Maksim Karev for planting the seed of an idea that led to this work by asking the question: ``What does the topological recursion for Schr\"{o}der numbers calculate?''

\emph{2020 Mathematics Subject Classification.} 05A15, 05E10, 15B52, 60B20

~

\hrule

~

\tableofcontents

~

\hrule

~

\section{Introduction} \label{sec:introduction}

In this paper, we introduce a family of polynomials, which arise in three distinct ways: in the large $N$ expansion of a matrix integral, as a weighted enumeration of factorisations of permutations, and via the topological recursion. Our construction simultaneously generalises the Narayana polynomials and the monotone Hurwitz numbers, both of which have garnered considerable attention in the literature~\cite{gou-gua-nov14,sul00}. We prove or conjecture that the family of polynomials we introduce satisfies a number of remarkable properties concerning their coefficients and roots, such as symmetry, unimodality, real-rootedness and interlacing.

This work is inspired by the known relations between Weingarten calculus on unitary groups, Jucys--Murphy elements in the symmetric group algebra, and monotone Hurwitz numbers~\cite{nov10,gou-gua-nov14}. Our starting point is the space of $N \times N$ idempotent Hermitian matrices of rank $M$, where $M < N$. This space admits the following three descriptions, where $I_M$ denotes the $M \times M$ identity matrix and $I_{M,N}$ denotes the $N \times N$ matrix whose first $M$ diagonal entries are 1 and whose remaining entries are 0.
\begin{align*}
\SM(M,N) &= \{ S \in \mathrm{Mat}_{N \times N}(\mathbb{C}) \mid S^2 = S, S = S^* \text{ and } \mathrm{rank}(S) = M \} \\
&= \{ S = U^*U \mid U \in \mathrm{Mat}_{M \times N}(\mathbb{C}) \text{ and } UU^* = I_M \} \\
&= \{ S = U I_{M,N} U^* \mid U \in \UM(N) \}
\end{align*}
The unitary group $\UM(N)$ acts transitively on $\SM(M,N)$ by conjugation, thus endowing it with the structure of a homogeneous space. Since the stabiliser of $I_{M,N}$ is $\UM(M) \times \UM(N-M)$, we may identify $\SM(M,N)$ with the complex Grassmannian $\mathrm{Gr}(M,N) \cong \UM(N) \, / \, \UM(M) \times \UM(N-M)$, which parametrises $M$-dimensional subspaces of an $N$-dimensional complex vector space.

As a compact homogeneous space, $\SM(M,N)$ inherits a normalised $\UM(N)$-invariant Haar measure, which we denote succinctly by $\dd S$. For $1 \leq i, j \leq N$, define the function $S_{ij}: \SM(M,N) \to \mathbb{C}$ corresponding to a matrix element. Taking our cue from the general theory of Weingarten calculus, we consider integrals of the form
\[
\int_{\SM(M,N)} S_{i_1 j_1} S_{i_2 j_2} \cdots S_{i_k j_k} \, \dd S,
\]
where $1 \leq i_1, i_2, \ldots, i_k, j_1, j_2, \ldots, j_k \leq N$. Our primary goal is to study such matrix integrals in the regime of large $N$ and fixed ratio $\frac{M}{N}$.

In recent decades, Weingarten calculus has developed into a rich theory concerned with integration on compact groups and related objects, with respect to the Haar measure~\cite{col-mat-nov22}. Following the usual paradigm, we define a Weingarten function that takes as input a permutation $\sigma \in S_k$ and outputs the following elementary integral, where our notation suppresses the dependence on $M$ and $N$.
\[
\SW(\sigma) = \int_{\SM(M,N)} S_{1,\sigma(1)} S_{2,\sigma(2)} \cdots S_{k,\sigma(k)} \, \dd S
\]
Modern approaches to Weingarten calculus typically rely on abstract algebraic tools such as Schur--Weyl duality~\cite{col03, col-sni06}. These are not as amenable to the current setting as the ideas rooted in the pioneering work of Weingarten~\cite{wei78} and revisited more recently by Collins and Matsumoto~\cite{col-mat17}, which we use as inspiration to obtain the following.\footnote{This introduction includes only a cursory discussion of our main results and conjectures. The reader is encouraged to consult the main text for the relevant definitions, precise statements, and further details.}

\begin{theoremx}[Weingarten calculus] ~
\begin{itemize}
\item {\em Convolution formula} (\cref{thm:convolution}) \\
Arbitrary integrals of monomials in the matrix elements of $\SM(M,N)$ reduce to elementary integrals via the equation
\[
\int_{\SM(M,N)} \sm_{i_1j_1} \sm_{i_2j_2} \cdots \sm_{i_kj_k} \, \dd \sm = \sum_{\sigma \in \SG_k} \delta_{i_{\sigma(1)},j_1} \delta_{i_{\sigma(2)},j_2} \cdots \delta_{i_{\sigma(k)},j_k} \SW(\sigma).
\]

\item {\em Orthogonality relations} (\cref{thm:orthogonality}) \\
For each permutation $\sigma \in S_k$, the Weingarten function satisfies the relation
\[
\SW(\sigma) = -\frac{1}{N} \sum_{i=1}^{k-1} \SW(\sigma \circ (i~k)) + \delta_{\sigma(k),k} \, \frac{M}{N} \SW(\sigma^\downarrow) + \frac{1}{N} \sum_{i=1}^{k-1} \delta_{\sigma(i),k} \SW([\sigma \circ (i~k) ]^\downarrow).
\]
Here, for any permutation $\rho \in \SG_k$ that fixes $k$, we write $\rho^\downarrow \in \SG_{k-1}$ to denote the permutation that agrees with $\rho$ on the set $\{1, 2, \ldots, k-1\}$.
\end{itemize}
\end{theoremx}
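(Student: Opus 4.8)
For the convolution formula, I would exploit the invariance of $\dd\sm$ under conjugation $\sm\mapsto V\sm V^{*}$ by any $V\in\UM(N)$. Expanding $\prod_{\ell=1}^{k}(V\sm V^{*})_{i_\ell j_\ell}=\prod_{\ell}\sum_{a_\ell,b_\ell}V_{i_\ell a_\ell}\sm_{a_\ell b_\ell}\overline{V_{j_\ell b_\ell}}$ and integrating shows that the tensor $T_{\vec\imath,\vec\jmath}:=\int_{\SM(M,N)}\sm_{i_1j_1}\cdots\sm_{i_kj_k}\,\dd\sm$ satisfies $T_{\vec\imath,\vec\jmath}=\sum_{\vec a,\vec b}\bigl(\prod_{\ell}V_{i_\ell a_\ell}\overline{V_{j_\ell b_\ell}}\bigr)T_{\vec a,\vec b}$ for every $V\in\UM(N)$; that is, $T$ is invariant under the natural $\UM(N)$-action on its row and column indices. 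By the elementary description of such invariant tensors (they are spanned by the permutation tensors), $T$ must be a linear combination of the Kronecker monomials $\prod_{\ell=1}^{k}\delta_{i_{\sigma(\ell)},j_\ell}$ over $\sigma\in\SG_k$. To identify the coefficients I would specialise to $i_\ell=\ell$ and $j_\ell=\tau(\ell)$ for a fixed $\tau\in\SG_k$: the monomial indexed by $\sigma$ becomes $\prod_\ell\delta_{\sigma(\ell),\tau(\ell)}$, which vanishes unless $\sigma=\tau$, whereas the left-hand side is $\SW(\tau)$ by definition; hence the coefficient of the $\sigma$-monomial equals $\SW(\sigma)$. This isolation is valid once $k\le N$, which is exactly the regime of interest, the Kronecker monomials then being linearly independent.

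For the orthogonality relations, the plan is to play the convolution formula against the three defining properties of $\sm$ — $\sm=\sm^{*}$, $\sm^{2}=\sm$, and $\operatorname{tr}\sm=M$ — in the spirit of Weingarten's original calculus, the engine being a two-way evaluation of a $(k+1)$-point integral. Using idempotency in the form $\sum_{b=1}^{N}\sm_{kb}\sm_{b,\sigma(k)}=\sm_{k,\sigma(k)}$, one has
\[
\SW(\sigma)=\sum_{b=1}^{N}\int_{\SM(M,N)}\sm_{1,\sigma(1)}\cdots\sm_{k-1,\sigma(k-1)}\,\sm_{kb}\,\sm_{b,\sigma(k)}\,\dd\sm ,
\]
and expanding the integrand by the convolution formula yields a sum over $\tau\in\SG_{k+1}$. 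Carrying out the sum over $b$ pins $b$ to a prescribed value for most $\tau$ but leaves it free — hence contributes a factor $N$ — for a distinguished family, and the surviving Kronecker patterns reorganise into the terms $\SW(\sigma)$, $\SW(\sigma\circ(i~k))$, and the $(k-1)$-point integrals $\SW([\sigma\circ(i~k)]^{\downarrow})$. Inserting $\operatorname{tr}\sm=\sum_{a}\sm_{aa}=M$ as a free factor in the $k$-point integral and matching coefficients of the Kronecker monomials produces the remaining $\frac{M}{N}\,\delta_{\sigma(k),k}\,\SW(\sigma^{\downarrow})$ contribution. Assembling everything and again using linear independence of the Kronecker monomials (for $k\le N$) gives the stated identity.

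The hard part is purely combinatorial bookkeeping: after the sum over the auxiliary index one must classify which $\tau\in\SG_{k+1}$ survive the delta-constraints, match each with one of $\sigma$, $\sigma\circ(i~k)$, $\sigma^{\downarrow}$ or $[\sigma\circ(i~k)]^{\downarrow}$ by a short cycle surgery at the symbol $k$ — which is also the source of the dichotomy $\delta_{\sigma(k),k}$ versus $\delta_{\sigma(i),k}$ — and track multiplicities so that the factors $N$ and $M$ attach to the correct terms. As an alternative that sidesteps this bookkeeping, one can use the model $\sm=\um I_{M,N}\um^{*}$ with $\um$ Haar on $\UM(N)$: expanding $\sm_{ij}=\sum_{a=1}^{M}\um_{ia}\overline{\um_{ja}}$ and applying ordinary unitary Weingarten calculus gives the closed formula $\SW(\sigma)=\sum_{\beta\in\SG_k}M^{\,c(\beta)}\,\UW(\beta\sigma)$, where $c(\beta)$ is the number of cycles of $\beta$; the relation for $\SW$ then follows from the known orthogonality relation for $\UW$ via a cycle-counting bijection between $\{\beta\in\SG_k:\beta(\sigma(k))=k\}$ and $\SG_{k-1}$, with the extra factor $M$ in the first term of the right-hand side coming precisely from the $1$-cycle that is freed when the symbol $k$ is removed.
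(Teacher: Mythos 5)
Your treatment of the convolution formula is correct, but it takes a different route from the paper: you use conjugation-invariance of $\dd\sm$ to show the moment tensor commutes with $V^{\otimes k}$ and then invoke the commutant (Schur--Weyl/first fundamental theorem) description of invariant tensors, identifying the coefficients by specialising to the index pattern $i_\ell=\ell$, $j_\ell=\tau(\ell)$ (valid since $k\leq N$). The paper instead writes $\sm=\um I_{M,N}\um^{*}$ and reduces both sides to unitary integrals, quoting the known convolution formula for $\UM(N)$. Both arguments ultimately rest on unitary-group input and both are fine; yours is slightly more self-contained at the level of structure, the paper's avoids invariant theory explicitly.

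For the orthogonality relations your primary route has a genuine gap. Splitting the $k$-th factor as $\sm_{k,\sigma(k)}=\sum_b\sm_{kb}\sm_{b,\sigma(k)}$ turns each summand into a $(k+1)$-factor integral, so the convolution formula expands it over $\tau\in\SG_{k+1}$ and, after the $b$-sum, you obtain an identity expressing $\SW(\sigma)$ as a combination of values $\SW(\tau)$ with $\tau\in\SG_{k+1}$ --- an \emph{ascending} relation (e.g.\ for $k=1$ it reads $\SW((1))=\SW((1)(2))+N\,\SW((1\,2))$). No bookkeeping of Kronecker patterns can "reorganise" level-$(k+1)$ Weingarten values into the terms $\SW(\sigma\circ(i~k))\in\SG_k$ and $\SW([\sigma\circ(i~k)]^\downarrow)\in\SG_{k-1}$ appearing in \cref{thm:orthogonality}, which is a \emph{descending} relation. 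The fix is to place the resolution of the identity so that the expanded summands have exactly $k$ factors: when $\sigma(k)\neq k$, set $j=\sigma^{-1}(k)$ and start from the $(k-1)$-point integral $\SW([\sigma\circ(j~k)]^\downarrow)$, expanding $\sm_{j,\sigma(k)}=\sum_i\sm_{j,i}\sm_{i,\sigma(k)}$; when $\sigma(k)=k$, append the trace $\sum_i\sm_{ii}=M$ to the $k-1$ factors of $\sigma^\downarrow$. This is exactly the paper's two-case argument, and each summand is then a $k$-point integral whose convolution expansion stays inside $\SG_k$, yielding the stated coefficients $-\tfrac1N$, $\tfrac MN$, $\tfrac1N$.

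Your proposed alternative, by contrast, does work and is genuinely different from the paper's proof: expanding $\sm_{ij}=\sum_{a=1}^M \um_{ia}\overline{\um_{ja}}$ and applying the unitary Weingarten formula gives $\SW(\sigma)=\sum_{\beta\in\SG_k}M^{c(\beta)}\UW(\beta\sigma)$ (I checked this against the tabulated values for $k\leq 2$), and applying the known unitary orthogonality relation to each $\UW(\beta\sigma)$ leaves the sum $\sum_{\beta:\beta(\sigma(k))=k}M^{c(\beta)}\UW([\beta\sigma]^\downarrow)$, which the bijection $\beta\mapsto[\beta\cdot(k~\sigma(k))]^\downarrow$ (respectively $\beta\mapsto\beta^\downarrow$ when $\sigma(k)=k$) identifies with $\SW([\sigma\circ(j~k)]^\downarrow)$, respectively $M\,\SW(\sigma^\downarrow)$, the factor $M$ arising from the freed $1$-cycle exactly as you say. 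Promoting this alternative to your main argument (and supplying the cycle-count check) gives a complete proof of \cref{thm:orthogonality}; the route via $(k+1)$-point expansions, as written, does not.
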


The orthogonality relations allow for at least two distinct approaches to calculate values of the Weingarten function. By solving the non-degenerate linear system provided by the orthogonality relations, one can explicitly obtain values of the Weingarten function, which are rational functions of $M$ and $N$. Alternatively, by iteratively and infinitely applying the orthogonality relations, one obtains the following large $N$ expansion for the Weingarten function, for fixed ratio $\frac{M}{N}$.

\begin{theoremx} [Large $N$ expansion, \cref{thm:fixTexpansion}] \label{thm:theoremb}
For a permutation $\sigma \in S_k$, let $\dhn{r}{\sigma}$ denote the weighted enumeration of tuples $\bm{\tau} = (\tau_1, \tau_2, \ldots, \tau_r)$ of transpositions in $S_k$ such that
\begin{itemize}
\item $\tau_1 \tau_2 \cdots \tau_r = \sigma$;
\item the sequence $\bm{\tau}$ is monotone --- that is, if we write $\tau_i = (a_i ~ b_i)$ with $a_i < b_i$, then $b_1 \leq b_2 \leq \cdots \leq b_r$; and
\item the weight of $\bm{\tau}$ is $t^{|\{b_1, b_2, \ldots, b_r\}|}$.
\end{itemize}
The Weingarten function has the following large $N$ expansion, for fixed $t= 1 - \frac{N}{M}$.
\[
\SW(\sigma) = \frac{1}{(1-t)^k} \sum_{r=0}^\infty \dhn{r}{\sigma} \left( \frac{-1}{N} \right)^r
\]
\end{theoremx}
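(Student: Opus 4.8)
The natural strategy is to feed the orthogonality relations of Theorem A into an iteration and track what the monotone transposition factorisations record. Let me set up the recursion carefully. Define $\Phi_r(\sigma)$ via $\SW(\sigma) = \frac{1}{(1-t)^k}\sum_{r\geq 0}\Phi_r(\sigma)(-1/N)^r$, where we're trying to show $\Phi_r(\sigma) = \vec{w}^t_r(\sigma)$. The key point: the orthogonality relation expresses $\SW(\sigma)$ on $S_k$ in terms of $\SW$ of permutations on $S_k$ (with an extra transposition composed — these "reduce complexity" in the Jucys-Murphy sense) and $\SW$ of permutations on $S_{k-1}$. I need to check the prefactors match: the relation has $\frac{M}{N}$ and $\frac{1}{N}$ coefficients, while the expansion has a $\frac{1}{(1-t)^k}$ prefactor that changes to $\frac{1}{(1-t)^{k-1}}$ when we drop to $S_{k-1}$. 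Since $t = 1 - N/M$, we have $\frac{M}{N} = \frac{1}{1-t}$, so $\frac{M}{N}\cdot\frac{1}{(1-t)^{k-1}} = \frac{1}{(1-t)^k}$ — the prefactors are consistent, good. The $\frac{1}{N}$ terms with the $\delta_{\sigma(i),k}$ also land in $S_{k-1}$ but come with an extra $\frac{1}{N}$ rather than $\frac{M}{N}$, which should correspond to the fact that such transpositions do \emph{not} introduce a new value into $\{b_1,\dots,b_r\}$ — hence no extra factor of $t$.

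$\;$

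First I would establish the base case $k=0$ (or $k=1$): the empty integral is $1$, and $\vec{w}^t_0(\mathrm{id}) = 1$ with all higher terms zero, matching $\SW(\mathrm{id}) = 1$ when $k=0$; for $k=1$ one computes $\int S_{11}\,dS = M/N = \frac{1}{1-t}$ directly, matching $\frac{1}{(1-t)^1}\cdot 1$. Then I would argue by induction on $k$, and within fixed $k$ by a secondary induction that makes the infinite iteration of the orthogonality relation well-defined — the standard device is to note that repeatedly applying $\sigma \mapsto \sigma\circ(i~k)$ decreases the value of $b_r$ (the largest "$b$" that can still appear), or equivalently decreases the number of non-fixed points among $\{1,\dots,k\}$, so after finitely many steps every branch either terminates (lands in $S_{k-1}$) or contributes $0$; this is exactly the mechanism that makes $\vec{w}^t_r(\sigma)$ a finite sum for each $r$ and simultaneously shows the $N$-expansion is a well-defined formal object. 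Concretely, substituting the claimed expansion for every $\SW$ on the right-hand side of the orthogonality relation, extracting the coefficient of $(-1/N)^r$, and using $\frac{M}{N} = \frac{1}{1-t}$ to absorb prefactors, I would check that the resulting identity for $\Phi_r(\sigma)$ is precisely the combinatorial recursion satisfied by $\vec{w}^t_r(\sigma)$ obtained by peeling off the \emph{last} transposition $\tau_r$ of a monotone factorisation: either $b_r = k$ and $\tau_r = (i~k)$ contributes a factor $t$ if $k\notin\{b_1,\dots,b_{r-1}\}$ (the $\frac{M}{N}\,\delta$ term) or a factor $1$ if it does (the $\frac{1}{N}\,\delta$ term), or $b_r < k$ meaning $\sigma$ itself fixes $k$ and we are already in $S_{k-1}$; the leading $-\frac{1}{N}\sum\SW(\sigma\circ(i~k))$ term is the "shift" that moves $\sigma$ toward a permutation admitting such a last transposition. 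I'd verify the monotonicity constraint $b_1\leq\dots\leq b_r$ is respected: once we've fixed that the largest letter moved is $k$, all earlier transpositions live in $S_{k-1}\subset S_k$ or involve $k$ again, but monotonicity forces at most... actually the cleanest bookkeeping is to say $\tau_r$ is the unique transposition of the form $(i~k)$ appearing, or $\sigma(k)=k$; I would phrase the combinatorial recursion in exactly the form the orthogonality relation dictates and check the weight $t^{|\{b_i\}|}$ transforms correctly.

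$\;$

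The main obstacle I anticipate is the careful matching of the combinatorial recursion for $\vec{w}^t_r(\sigma)$ with the algebraic recursion, particularly handling the three terms on the right of the orthogonality relation and confirming that the "does $k$ already appear among $b_1,\dots,b_{r-1}$" dichotomy aligns with the $\frac{M}{N}$ versus $\frac{1}{N}$ coefficients — this requires proving a clean lemma of the shape "$\vec{w}^t_r(\sigma) = \sum_i \vec{w}^t_{r-1}(\sigma(i~k))$ restricted/weighted appropriately" by classifying monotone factorisations according to $\tau_r$, and in particular showing that when $\sigma(k)=k$ one can separate the contribution where no transposition touches $k$ (giving $\vec{w}^{t}_r(\sigma^\downarrow)$, the $S_{k-1}$ piece) from those where some $(i~k)$ appears as an interior transposition — but monotonicity and $\tau_1\cdots\tau_r = \sigma$ with $\sigma(k)=k$ force the transpositions involving $k$ to come in a specific controlled pattern. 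A secondary technical point is justifying the interchange of the infinite iteration with the coefficient extraction, i.e.\ that the formal power series manipulations are legitimate; this follows from the finiteness argument above (each $\vec{w}^t_r(\sigma)$ is a finite weighted count since a monotone factorisation of $\sigma\in S_k$ into $r$ transpositions has $r$ bounded by $\binom{k}{2}$-type considerations only after fixing that... actually $r$ is unbounded but the \emph{number} of such factorisations is finite for each $r$, and for the convergence of the iteration one uses that the recursion strictly reduces a well-founded complexity measure). Once these pieces are in place, the theorem follows by assembling the induction.
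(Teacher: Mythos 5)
Your route is genuinely different from the paper's. The paper does not induct on the orthogonality relations at the level of coefficients: it encodes their infinite iteration as weighted paths in a ``Weingarten graph'' with three edge types, writes $\SW(\sigma)$ as a sum over all paths from $\sigma$ to the empty permutation, and then maps paths to monotone factorisations; each factorisation $\bm{\tau}$ has exactly $2^{b(\bm{\tau})}$ preimages (each distinct hive value can be realised either by an $A$-then-$B$ pair of edges or by a single $C$ edge), and the signed weights of these preimages sum to $\big(1-\tfrac{1}{q}\big)^{b(\bm{\tau})}=t^{b(\bm{\tau})}$, which gives the theorem after $q=\tfrac{M}{N}=\tfrac{1}{1-t}$. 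Your plan --- substitute the claimed expansion into the orthogonality relation, extract the coefficient of $(-1/N)^r$, and verify the resulting identity for $\dhn{r}{\sigma}$ by peeling off the last transposition --- is viable and somewhat more elementary, since it avoids the path formalism; the price is that the hive-number weight emerges only through a signed correction term rather than through the clean $2^{b(\bm{\tau})}$-to-one correspondence.

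Two points in your sketch need repair. First, your matching of combinatorial cases to the three algebraic terms is mis-assigned. Writing $h=-\tfrac{1}{N}$ and multiplying through by $(1-t)^k$, the identity you must prove is
\[
\dhn{r}{\sigma} \;=\; \sum_{i=1}^{k-1} \dhn{r-1}{\sigma\circ(i~k)} \;+\; \delta_{\sigma(k),k}\,\dhn{r}{\sigma^\downarrow} \;+\; (t-1)\sum_{i=1}^{k-1}\delta_{\sigma(i),k}\,\dhn{r-1}{[\sigma\circ(i~k)]^\downarrow},
\]
and its proof runs: factorisations of $\sigma$ avoiding $k$ entirely give the $\delta_{\sigma(k),k}$ term (here $\tfrac{M}{N}=\tfrac{1}{1-t}$ merely absorbs the change of prefactor from $(1-t)^{-(k-1)}$ to $(1-t)^{-k}$; it is not a weight $t$); factorisations whose last transposition is $(i~k)$ are counted by $\dhn{r-1}{\sigma\circ(i~k)}$ with the wrong weight exactly when the truncated factorisation avoids $k$ --- in that case $k$ is a \emph{new} hive value and an extra factor $t$ is owed --- and the third term, which enters with net coefficient $t-1$ because $+\tfrac{1}{N}=-h$ carries a sign, supplies precisely that correction. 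So the $\tfrac{1}{N}\,\delta_{\sigma(i),k}$ term corresponds to the case where $k$ \emph{is} new, the opposite of what you wrote; moreover it is a correction inside an already-counted class, not a class of its own (as your own prefactor bookkeeping would have revealed: read your way, that term would be prefactor-inconsistent). Second, your convergence remark is wrong as stated: composing with $(i~k)$ does not decrease any complexity measure and the branching never terminates --- monotone factorisations of arbitrary length exist, which is why the expansion is an infinite series. What you actually need is (i) the displayed recursion is well-founded (the $\SG_k$ terms lower $r$, the $\SG_{k-1}$ terms lower $k$), so it determines the coefficients uniquely from the base case, and (ii) a justification that $\SW(\sigma)$ admits an expansion in $\tfrac{1}{N}$ whose coefficients obey the same recursion --- for instance by bounding the remainder after finitely many iterations of the orthogonality relations (as the paper implicitly does in its $\ell\to\infty$ limit), or by checking the series converges for large $N$ and invoking uniqueness of solutions to the orthogonality relations. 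With those two repairs your argument goes through.
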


Monotone Hurwitz numbers count monotone sequences of transpositions with prescribed length and with product of a prescribed cycle type, usually with an additional transitivity assumption. They are known to arise in the Weingarten calculus for unitary groups and in the large $N$ expansion of the HCIZ matrix integral~\cite{gou-gua-nov14}. The theorem above motivates a ``deformed'' version of the monotone Hurwitz numbers, in which each sequence $((a_1~b_1), (a_2~b_2), \ldots, (a_r~b_r))$ of transpositions is weighted by the monomial $t^{|\{b_1, b_2, \ldots, b_r\}|}$. The resulting polynomials in $t$ are referred to as {\em deformed monotone Hurwitz numbers} and denoted by $\HN{g}{n}{\mu_1, \ldots, \mu_n}$. Their precise definition appears in \cref{def:deformed} and they are the central objects introduced and studied in the present work.

The deformed monotone Hurwitz numbers obey various recursions, from which known results for the usual monotone Hurwitz numbers are recovered when $t = 1$~\cite{gou-gua-nov13a, do-dye-mat17, cha-do21}.

\begin{theoremx}[Recursions] \label{thmx:recursions} ~
\begin{itemize}
\item {\em Cut-and-join recursion} (\cref{thm:cutjoin}) \\
The deformed monotone Hurwitz numbers can be computed from the base case $\HN{0}{1}{1} = 1$ and the recursion
\begin{align*}
\mu_1 \HN{g}{n}{\mu_1, \mu_S} &= \sum_{i=2}^n (\mu_1 + \mu_i) \, \HN{g}{n-1}{\mu_1 + \mu_i, \mu_{S \setminus \{i\}}} + (t - 1)(\mu_1-1) \, \HN{g}{n}{\mu_1-1, \mu_S} \\
&+ \sum_{\alpha+\beta=\mu_1} \alpha \beta \Bigg[ \HN{g-1}{n+1}{\alpha, \beta, \mu_S} + \sum_{\substack{g_1 + g_2 = g \\ I_1 \sqcup I_2 = S}} \HN{g_1}{|I_1|+1}{\alpha, \mu_{I_1}} \, \HN{g_2}{|I_2|+1}{\beta, \mu_{I_2}} \Bigg].
\end{align*}

\item {\em One-point recursion} (\cref{thm:onepoint}) \\ The one-point deformed monotone Hurwitz numbers --- in other words, those with $n = 1$ --- satisfy
\[
d^2 \, \HN{g}{1}{d} = (d-1) (2d-3) (t+1) \, \HN{g}{1}{d-1} - (d-2) (d-3) (t-1)^2 \, \HN{g}{1}{d-2} + d^2(d-1)^2 \, \HN{g-1}{1}{d}.
\]

\item {\em Topological recursion} (\cref{thm:TR}) \\ The deformed monotone Hurwitz numbers are governed by the topological recursion on the genus zero spectral curve $xy^2 + (t-1)xy - y + 1 = 0$.
\end{itemize}
\end{theoremx}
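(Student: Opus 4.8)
The plan is to establish the cut-and-join recursion first, directly from the combinatorial/algebraic definition of the deformed numbers, then to derive the topological recursion from it via loop equations, and finally to extract the one-point recursion from the topological recursion on the explicit spectral curve.

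\emph{Cut-and-join.} I would work with the algebraic encoding of $\HN{g}{n}{\mu}$ through monotone factorisations. Let $\mathcal{J}_j=\sum_{i<j}(i~j)$ be the $j$-th Jucys--Murphy element and set $G_k=\prod_{j=1}^k\frac{1+(t-1)z\mathcal{J}_j}{1-z\mathcal{J}_j}$ in the group algebra of $S_k$; the level-$j$ factor assigns weight $t$ to any non-empty block of transpositions with second index $j$ and weight $1$ to the empty block, so the coefficient of $\sigma$ in $G_k$ is the $t$-weighted length generating function of monotone factorisations of $\sigma$ — matching the defining statistic $t^{|\{b_1,\ldots,b_r\}|}$ after passing to the connected part and the genus expansion. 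The engine is the one-step identity $G_k(1-z\mathcal{J}_k)=G_{k-1}(1+(t-1)z\mathcal{J}_k)$, equivalently $G_k=G_{k-1}+zG_k\mathcal{J}_k+(t-1)zG_{k-1}\mathcal{J}_k$. Reading off the coefficient of a permutation in which the largest letter lies in the marked cycle, and using that right-multiplication by a transposition $(i~k)$ either joins two cycles or cuts one, the term $zG_k\mathcal{J}_k$ produces the join and cut families (connected and disconnected), while the numerator term $(t-1)zG_{k-1}\mathcal{J}_k$ inserts a single top-level transposition of weight $(t-1)$ in one of $\mu_1-1$ admissible positions, producing exactly $(t-1)(\mu_1-1)\HN{g}{n}{\mu_1-1,\mu_S}$; this vanishes at $t=1$, recovering the classical monotone cut-and-join of \cite{gou-gua-nov13a}. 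I expect the main obstacle to be the bookkeeping that turns the group-algebra identity into the cycle-type recursion, in particular separating the connected from the disconnected cut contributions and verifying that the new statistic enters only through the numerator, once per top level.

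\emph{Topological recursion.} With the cut-and-join in hand, I would define the correlation differentials $\omega_{g,n}$ by a discrete Laplace transform (principal specialisation) of the $\HN{g}{n}{\mu}$ and prove they satisfy the Eynard--Orantin loop equations on the genus-zero curve $xy^2+(t-1)xy-y+1=0$. The unstable data $\omega_{0,1}=y\,\dd x$ and $\omega_{0,2}$ fix $x$, $y$ and the Bergman kernel; the curve is itself the generating function of the $(g,n)=(0,1)$ cut-and-join, and its rationality is what makes the recursion effective. The main computation is the standard translation: the join terms reproduce the simple pole of the recursion kernel at the branch point, the connected and disconnected cut terms produce the $\omega_{g-1,n+1}$ and bilinear pieces, and the linear deformation term is carried through the transform as the $(t-1)xy$ contribution to the curve. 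The obstacle here is fixing the correct transform and rational parametrisation and checking that the deformation and linear terms of the cut-and-join generate no spurious polar parts, so that the loop equations hold exactly.

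\emph{One-point recursion.} This cannot be read off from the $n=1$ cut-and-join alone, since setting $n=1$ leaves the two-point term $\sum_{\alpha+\beta=\mu_1}\alpha\beta\,\HN{g-1}{2}{\alpha,\beta}$, which is not expressible through one-point data. Instead I would sum the one-point topological recursion over the genus to obtain the quantum curve for the wave function $\psi=\exp\sum_g \hbar^{2g-1}\!\int^x\omega_{g,1}$, namely an $\hbar$-deformation of $xy^2+(t-1)xy-y+1$ with $\hat{y}$ a shift operator conjugate to $x$. Extracting the coefficient of $x^d$ and grading by $\hbar$ (equivalently by genus), the $\hat{y}$ and $\hat{x}\hat{y}^2$ terms produce the shifts carrying $(d-1)(2d-3)(t+1)$ and $-(d-2)(d-3)(t-1)^2$ — dictated by the Taylor data of the curve and its discriminant $((t-1)x-1)^2-4x$ — against $d^2\,\HN{g}{1}{d}$ on the left, while the non-commutative $\hbar^2$ correction from the quadratic term yields the genus-dropping contribution $d^2(d-1)^2\,\HN{g-1}{1}{d}$. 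I expect the main obstacle to be establishing the quantum curve and the ordering of $\hat{x}$ and $\hat{y}$: one must verify that the $\hbar$-corrections assemble into precisely these polynomial coefficients, which requires controlling the local expansion at the branch point to the order at which the genus shift first appears.
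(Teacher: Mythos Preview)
Your strategy is viable, but the logical architecture and the tools differ from the paper's.

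For the cut-and-join recursion, your Jucys--Murphy identity $G_k=G_{k-1}+zG_k\mathcal{J}_k+(t-1)zG_{k-1}\mathcal{J}_k$ is an algebraic repackaging of the paper's argument: the paper works directly on transitive monotone factorisations, removes the last transposition $(a~k)$, splits into cut/join and transitive/non-transitive cases, and identifies the $(t-1)$ correction as arising exactly when $k$ becomes an isolated orbit after the removal (so that the hive number jumps). Both routes track the same bookkeeping and are essentially equivalent.

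For the other two parts the paper does \emph{not} follow your sequential derivation cut-and-join $\Rightarrow$ TR $\Rightarrow$ one-point; the three proofs are independent. The topological recursion is obtained by recognising the deformed monotone Hurwitz numbers as weighted Hurwitz numbers with $f(z)=\frac{1-z+tz}{1-z}$ via the character formula, and then invoking the theorem of Bychkov--Dunin-Barkowski--Kazarian--Shadrin \cite{BDKS21} as a black box; your loop-equation approach would work (it is how the undeformed case was handled in \cite{do-dye-mat17}) but is far more labour. The one-point recursion is likewise not extracted from a quantum curve: the paper applies the ready-made algorithm of \cite{cha-do21} for one-point recursions of weighted Hurwitz numbers. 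Your quantum-curve plan is reasonable in outline, but you have not written down the operator, and verifying that the $\hbar$-corrections produce the precise coefficients $(d-1)(2d-3)(t+1)$, $-(d-2)(d-3)(t-1)^2$ and $d^2(d-1)^2$ is a genuine computation that remains to be done --- this is the least secure step in your proposal.
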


These recursions are all effective and can be used to produce explicit data, some of which is contained in \cref{app:data}. At the level of coefficients, each deformed monotone Hurwitz number is symmetric (the sequence of coefficients is palindromic) and unimodal (the sequence of coefficients increases to a point and then decreases). We prove this in \cref{prop:symmetry} using the cut-and-join recursion and note that these properties are not immediate from the combinatorial deformation of the deformed monotone Hurwitz numbers.

At the level of roots, it appears that the deformed monotone Hurwitz numbers are real-rooted and that they exhibit interlacing phenomena. Two real-rooted polynomials are said to {\em interlace} if their degrees differ by one and their roots weakly alternate on the real number line. We have gathered overwhelming numerical evidence to support the following conjectures.

\begin{conjecturex}[Roots] ~
\begin{itemize}
\item {\em Real-rootedness} (\cref{con:realrooted}) \\ The deformed monotone Hurwitz number $\HN{g}{n}{\mu_1, \mu_2, \ldots, \mu_n}$ is a real-rooted polynomial in $t$.

\item {\em Interlacing} (\cref{con:interlacing}) \\ The polynomial $\HN{g}{n}{\mu_1, \mu_2, \ldots, \mu_n}$ interlaces each of the $n$ polynomials
\[
\HN{g}{n}{\mu_1+1, \mu_2, \ldots, \mu_n}, \quad \HN{g}{n}{\mu_1, \mu_2+1, \ldots, \mu_n}, \quad \ldots, \quad \HN{g}{n}{\mu_1, \mu_2, \ldots, \mu_n+1}.
\]
\end{itemize}
\end{conjecturex}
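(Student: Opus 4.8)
The plan is to establish \cref{con:realrooted} and \cref{con:interlacing} simultaneously, by induction, using the recursions of \cref{thmx:recursions} to propagate a suitably strengthened statement. At the outset I would record two facts. First, each $\HN{g}{n}{\mu}$ has non-negative coefficients (immediate from \cref{def:deformed}) and, by \cref{prop:symmetry}, is palindromic (up to a power of $t$) and unimodal; consequently, writing $u = t + t^{-1}$, real-rootedness of $\HN{g}{n}{\mu}$ with its (then necessarily negative) roots is equivalent to real-rootedness, with all roots in $(-\infty,-2]$, of the half-degree polynomial $q_{g,n,\mu}(u)$ defined by $\HN{g}{n}{\mu} = t^{m} q_{g,n,\mu}(t + t^{-1})$. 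Since $(t-1)^2 = t(u - 2)$, it is likely cleanest to run the argument in the $u$-variable, where palindromy is automatic and the squares appearing in the recursions become linear. Second, the base of the induction is classical: up to normalisation $\HN{0}{1}{d}$ is the $d$-th Narayana polynomial, and Narayana polynomials are real-rooted with consecutive members interlacing; equivalently, the $(g,n) = (0,1)$ generating series is an algebraic function cut out by the spectral curve of \cref{thm:TR}.

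For the one-point numbers I would induct on $g$ using the one-point recursion,
\[
d^2 \HN{g}{1}{d} = (d-1)(2d-3)(t+1)\,\HN{g}{1}{d-1} - (d-2)(d-3)(t-1)^2\,\HN{g}{1}{d-2} + d^2(d-1)^2\,\HN{g-1}{1}{d}.
\]
For fixed $g$ this is a perturbation of the Narayana recurrence by the term $d^2(d-1)^2 \HN{g-1}{1}{d}$, which by the inductive hypothesis is a positive multiple of a real-rooted polynomial with negative roots (at $g = 0$ the perturbation vanishes and one recovers the base case). The classical way to run such an argument is to carry along, over the sequence $(\HN{g}{1}{d})_{d \ge 1}$, not just real-rootedness but the precise interlacing of consecutive members and of $\HN{g}{1}{d}$ with $\HN{g-1}{1}{d}$, together with sign data at $t = 0$ and at the common interlacing nodes, and then to check that the recursion preserves the whole package, in the spirit of Sturm sequences and orthogonal polynomials. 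The delicate point is the middle term: $(t-1)^2$ is a non-negative square, hence harmless in the $u$-variable, but its coefficient is \emph{quadratic} in $t$, so the recursion is not literally an orthogonal-polynomial recurrence and off-the-shelf theorems do not apply; one must verify directly that $\HN{g}{1}{d-2}$, suitably scaled, interlaces the positive combination $(t+1)\HN{g}{1}{d-1} + d^2 \HN{g-1}{1}{d}$, so that subtracting it merely displaces roots along the negative axis without creating complex ones.

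For the general case I would induct on $2g - 2 + n$ using the cut-and-join recursion, with the inductive hypothesis comprising the full conjectural interlacing structure of \cref{con:interlacing} at lower complexity, plus real-rootedness and negativity of roots; the bare conjectures are almost certainly not self-propagating. With this in hand the two ``quadratic-type'' families of terms are routine: a product of real-rooted polynomials with negative roots is again such, disposing of $\HN{g_1}{|I_1|+1}{\alpha, \mu_{I_1}} \HN{g_2}{|I_2|+1}{\beta, \mu_{I_2}}$ and of $\HN{g-1}{n+1}{\alpha, \beta, \mu_S}$; and a sum of real-rooted polynomials sharing a common interlacer is real-rooted, so one must exhibit a single lower-complexity Hurwitz number interlacing every summand of $\sum_{i \ge 2}(\mu_1 + \mu_i)\HN{g}{n-1}{\mu_1 + \mu_i, \mu_{S \setminus \{i\}}}$ and of the $\alpha\beta$-block, which is exactly where the strengthened hypothesis is needed. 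The genuinely awkward term is $(t-1)(\mu_1-1)\HN{g}{n}{\mu_1-1, \mu_S}$: multiplication by $(t-1)$ inserts the root $t = 1$, outside the negative-real locus of everything else, so this is not a positive-combination contribution. Since the left-hand side is palindromic (up to a power of $t$) while $(t-1)$ is anti-palindromic, some cancellation internal to the recursion must already be forcing this term's contribution to respect the palindromy; understanding and exploiting that cancellation — most plausibly by pairing the $(t-1)$ term against a piece of the $\mu_1 + \mu_i$ sum, or by re-expressing everything in the $u$-variable — looks essential.

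The main obstacle I anticipate is identifying the correct strengthened inductive hypothesis: one must adjoin to \cref{con:realrooted,con:interlacing} precisely the right auxiliary interlacings — among $\HN{g}{n}{\mu}$ and the lower objects $\HN{g}{n-1}{\cdot}$, $\HN{g-1}{n+1}{\cdot}$ on the right-hand sides, with degrees and endpoint signs matched — so that every term of every recursion lands in one interlacing class, and it is not clear a priori that such a self-closing package exists. A complementary analytic attack would use the topological recursion of \cref{thm:TR}: the spectral curve $xy^2 + (t-1)xy - y + 1 = 0$ has discriminant $(t-1)^2 x^2 - 2(t+1)x + 1$, with real branch points $x_\pm = (\sqrt{t} \mp 1)^{-2}$ for $t > 0$, and one could try to extract from the $\omega_{g,n}$ a contour-integral representation of $\HN{g}{n}{\mu}$ whose manifest sign pattern pins the roots to $(-\infty, 0)$; establishing such a representation and controlling it uniformly in $(g, n, \mu)$ would itself be a substantial undertaking.
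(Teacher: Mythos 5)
The statement you are addressing is presented in the paper as a conjecture (\cref{con:realrooted,con:interlacing}), supported by extensive computation; the authors prove only the $(g,n)=(0,1)$ and $(1,1)$ cases (\cref{prop:genus01}), by feeding the $g=0$ specialisation of the one-point recursion, together with a separate genus-one three-term recursion established via the topological recursion (\cref{eq:genus1point}), into the interlacing lemma \cref{lem:interlacing}. Your proposal is a research programme rather than a proof, and its decisive steps are exactly the parts that remain open. Concretely: (i) for the one-point family, your induction on $g$ must absorb the genus-lowering term $d^2(d-1)^2\,\HN{g-1}{1}{d}$ together with a middle coefficient quadratic in $t$; \cref{lem:interlacing} and the standard common-interlacing closure theorems do not apply there, and your sentence that ``one must verify directly that $\HN{g}{1}{d-2}$, suitably scaled, interlaces the positive combination'' is the entire difficulty rather than a reduction of it — note also that the remark closing \cref{subsec:interlacing} shows that no recursion of the simpler three-term shape exists already for $\HN{2}{1}{d}$, so the mechanism that works for $g\leq 1$ demonstrably requires new input beyond ``carrying more sign data''; (ii) in the cut-and-join induction, the term $(t-1)(\mu_1-1)\,\HN{g}{n}{\mu_1-1,\mu_S}$ changes sign on the negative real axis, which is precisely where all the roots are expected to lie, so the right-hand side is not a nonnegative combination of real-rooted polynomials with a common interlacer, and your only remedy is the unsubstantiated hope that ``some cancellation internal to the recursion'' restores the structure; (iii) the blocks $\sum_{\alpha+\beta=\mu_1}\alpha\beta\,\HN{g_1}{|I_1|+1}{\alpha,\mu_{I_1}}\,\HN{g_2}{|I_2|+1}{\beta,\mu_{I_2}}$ are sums of products, and while each product is real-rooted, the sum (and its interlacing against the left-hand side) requires a common interlacing that your strengthened hypothesis has not been shown to supply — as you concede, it is not known that any self-closing inductive package exists.

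Where your sketch overlaps the paper, it is consistent with it: the base case is the Narayana family, and the $u=t+t^{-1}$ reduction is legitimate because \cref{prop:symmetry} gives palindromy (though it does not cure the sign problem in (ii), since $(t-1)(\mu_1-1)\HN{g}{n}{\mu_1-1,\mu_S}$ has darga matching the other terms only after the authors' regrouping in the proof of \cref{prop:symmetry}, which controls coefficients, not roots). Your proposed analytic alternative via the spectral curve of \cref{thm:TR} is likewise only a suggestion: no contour-integral representation with a manifest sign pattern is produced, and nothing in the paper provides one. So the proposal should be regarded as a plausible plan of attack on an open conjecture, not as a proof; if you want an unconditional contribution in this direction, the realistic targets are the cases the paper actually handles — $(g,n)=(0,1)$ and $(1,1)$ — where your one-point strategy essentially coincides with the paper's argument.
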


In the case $(g,n) = (0,1)$, the deformed monotone Hurwitz numbers recover the sequence of Narayana polynomials via the equation
\begin{equation} \label{eq:hurwitznarayana} 
(\mu+1) \, \HN{0}{1}{\mu+1} = \mathrm{Nar}_{\mu}(t) := \sum_{i=1}^{\mu} \frac{1}{\mu} \binom{\mu}{i} \binom{\mu}{i-1} \, t^i.
\end{equation}
Thus, we consider the deformed monotone Hurwitz numbers to be a ``topological generalisation'' of the Narayana polynomials. We propose the topological recursion as a mechanism to ``topologise'' sequences of polynomials more generally. In particular, we claim that doing so can preserve interesting behaviour in the polynomials, such as symmetry, unimodality, real-rootedness, and interlacing properties. This is not only the case for the deformed monotone Hurwitz numbers, but we also observe these phenomena in the weighted enumeration of dessins d'enfant --- in other words, bicoloured maps --- in which each black vertex in a dessin d'enfant is assigned a multiplicative weight $t$.

Our results concerning integration on complex Grassmannians and deformed monotone Hurwitz numbers suggest various avenues for further research. It would be natural to consider integration on real Grassmannians $\mathrm{Gr}(M,N)$, also in the regime of large $N$ with fixed ratio $\frac{M}{N}$, and the first two authors are currently pursuing this line of investigation. Matsumoto considered Weingarten calculus on compact symmetric spaces~\cite{mat13} and the particular case of the symmetric space AIII bears a strong resemblance to the present work. It would be interesting to further develop the parallels between these two settings. The real-rootedness and interlacing conjectures for deformed monotone Hurwitz numbers (\cref{con:realrooted,con:interlacing}) and the weighted dessin d'enfant enumeration (\cref{con:dessins}) not only require proof, but also invite a deeper exploration of how common these phenomena might be.

The structure of the paper is as follows.
\begin{itemize}
\item In \cref{sec:weingarten}, we develop the Weingarten calculus for integration over $\SM(M,N)$. This includes a convolution formula (\cref{thm:convolution}) and orthogonality relations (\cref{thm:orthogonality}). We use the latter to express the Weingarten function of a permutation as a weighted enumeration of monotone sequences of transpositions (\cref{thm:fixTexpansion}). As a consequence, the Weingarten function can be written succinctly in terms of Jucys--Murphy elements in the symmetric group algebra (\cref{prop:weingartenjucys}).

\item In \cref{sec:monotone}, we define the notion of deformed monotone Hurwitz numbers, which are polynomials in the deformation parameter $t$, motivated by the results of the previous section. We prove ``deformed'' analogues of existing results concerning the usual monotone Hurwitz numbers, such as a character formula (\cref{prop:reptheory}), a cut-and-join recursion (\cref{thm:cutjoin}), and a one-point recursion (\cref{thm:onepoint}). It follows from the cut-and-join recursion that the coefficients of deformed monotone Hurwitz numbers are symmetric and unimodal (\cref{prop:symmetry}). On the basis of extensive numerical evidence, we conjecture that these polynomials are real-rooted (\cref{con:realrooted}) and that their roots satisfy remarkable interlacing phenomena (\cref{con:interlacing}).

\item In \cref{sec:TR}, we briefly introduce the topological recursion of Chekhov, Eynard and Orantin~\cite{che-eyn06, eyn-ora07} and then use a powerful result of Bychkov, Dunin-Barkowski, Kazarian and Shadrin~\cite{BDKS21} to prove that topological recursion on the spectral curve $xy^2 + (t-1)xy - y + 1 = 0$ governs the deformed monotone Hurwitz numbers (\cref{thm:TR}). We propose the topological recursion as a mechanism to produce topological generalisations of sequences of polynomials with interesting properties. As a case study, we consider the weighted enumeration of dessins d'enfant --- in other words, bicoloured maps --- in which each black vertex receives a multiplicative weight $t$. This produces a family of polynomials that also satisfies a cut-and-join recursion (\cref{prop:dessincutjoin}) and the topological recursion (\cref{thm:dessinTR}). In analogy with the case of deformed monotone Hurwitz numbers studied in the previous section, we conjecture that these polynomials exhibit real-rootedness and interlacing phenomena (\cref{con:dessins}).
\end{itemize}

\section{Weingarten calculus} \label{sec:weingarten}

\subsection{Convolution formula and orthogonality relations}

As mentioned in the introduction, the present work is concerned with integration on the complex Grasmannian $\mathrm{Gr}(M,N)$ for $M < N$. We interpret this Grassmannian as the space of $N \times N$ idempotent Hermitian matrices of rank $M$, which admits three equivalent descriptions as per the following definition.

\begin{definition}
Let $I_M$ denote the $M \times M$ identity matrix and $I_{M,N}$ denote the $N \times N$ matrix whose first~$M$ diagonal entries are 1 and whose remaining entries are 0. For $M < N$, define the space
\begin{align*}
\SM(M,N) &= \{ S \in \mathrm{Mat}_{N \times N}(\mathbb{C}) \mid S^2 = S, S = S^* \text{ and } \mathrm{rank}(S) = M \} \\
&= \{ S = U^*U \mid U \in \mathrm{Mat}_{M \times N}(\mathbb{C}) \text{ and } UU^* = I_M \} \\
&= \{ S = U I_{M,N} U^* \mid U \in \UM(N) \}.
\end{align*}
\end{definition}

The unitary group $\UM(N)$ acts transitively on $\SM(M,N)$ by conjugation, thus endowing it with the structure of a compact homogeneous space. Thus, the Haar measure on $\UM(N)$ induces a $\UM(N)$-invariant normalised Haar measure on $\SM(M,N)$, which we denote succinctly by $\dd S$. (See~\cite{die-spa14} for an introduction to Haar measures.) The fact that the stabiliser of $I_{M,N} \in \SM(M,N)$ is $\UM(M) \times \UM(N-M)$ allows us to identify $\SM(M,N)$ with the complex Grassmannian $\mathrm{Gr}(M,N) \cong \UM(N) \, / \, \UM(M) \times \UM(N-M)$.

For $1 \leq i, j \leq N$, define the function $S_{ij}: \SM(M,N) \to \mathbb{C}$ corresponding to the $(i,j)$ matrix element. Our primary goal is to calculate integrals of the form
\[
\int_{\SM(M,N)} S_{i_1 j_1} S_{i_2 j_2} \cdots S_{i_k j_k} \, \dd S,
\]
where $1 \leq i_1, i_2, \ldots, i_k, j_1, j_2, \ldots, j_k \leq N$. We impose the technical assumption that $k \leq N$ for future convenience. However, this assumption has little bearing on our work, since we will study these matrix integrals in the regime of large $N$ and fixed ratio $\frac{M}{N}$. The following elementary integrals will be of particular importance.

\begin{definition}[Weingarten function] \label{def:weingarten}
For each permutation $\sigma \in S_k$, define the integral
\[
\SW(\sigma) = \int_{\SM(M,N)} \sm_{1,\sigma(1)} \sm_{2,\sigma(2)} \cdots \sm_{k,\sigma(k)} \, \dd \sm.
\]
We refer to the function $\SW: \SG_0 \sqcup \SG_1 \sqcup \SG_2 \sqcup \cdots \to \mathbb{C}$ as the {\em Weingarten function} for $\SM(M,N)$. Here, we include the symmetric group $S_0$, whose unique element is denoted $(\,)$ and represents the empty permutation. In the notation for the Weingarten function, we suppress the dependence on $M$ and $N$ to avoid clutter.
\end{definition}

The setup described above sits firmly in the realm of Weingarten calculus, which is broadly concerned with the calculation of integrals on compact groups and related objects with respect to the Haar measure~\cite{col-mat-nov22}. Modern accounts of Weingarten calculus often rely on elegant algebraic approaches via Schur--Weyl duality~\cite{col03, col-sni06}. A direct use of such an argument is not immediately available for the case of integration over $\SM(M,N)$. We instead follow the approach via orthogonality relations utilised by Collins and Matsumoto~\cite{col-mat17}, which is in turn inspired by the ideas contained in the seminal paper of Weingarten~\cite{wei78}.

In the remainder of this section, we develop the Weingarten calculus for integration on $\SM(M,N)$ in three parts. First, we prove a convolution formula that reduces general integrals of monomials in the matrix elements to the elementary ones defined in \cref{def:weingarten}. Second, we prove so-called orthogonality relations that completely determine all values of the Weingarten function. Third, we solve the linear system provided by these orthogonality relations, which connects naturally to the representation theory of the symmetric groups, particularly to the Jucys--Murphy elements in the symmetric group algebra.

\begin{theorem}[Convolution formula] \label{thm:convolution}
Arbitrary integrals of monomials in the matrix elements of $\SM(M,N)$ reduce to elementary integrals via the equation
\[
\int_{\SM(M,N)} \sm_{i_1j_1} \sm_{i_2j_2} \cdots \sm_{i_kj_k} \, \dd \sm = \sum_{\sigma \in \SG_k} \delta_{i_{\sigma(1)},j_1} \delta_{i_{\sigma(2)},j_2} \cdots \delta_{i_{\sigma(k)},j_k} \SW(\sigma).
\]
\end{theorem}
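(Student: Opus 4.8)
The plan is to reduce the general monomial integral to the elementary Weingarten integrals by exploiting the $\UM(N)$-invariance of the measure $\dd S$. The key observation is that for any $V \in \UM(N)$, the map $S \mapsto VSV^*$ preserves $\SM(M,N)$ and preserves $\dd S$, so the integral $\int S_{i_1j_1}\cdots S_{i_kj_k}\,\dd S$ is unchanged if we replace each $S_{i_aj_a}$ by $(VSV^*)_{i_aj_a} = \sum_{p_a,q_a} V_{i_a p_a} S_{p_a q_a} \overline{V_{j_a q_a}}$. Averaging this identity over $V$ with respect to Haar measure on $\UM(N)$, the integral equals $\int_{\SM(M,N)} \left( \int_{\UM(N)} \prod_{a=1}^k V_{i_a p_a} \overline{V_{j_a q_a}} \, \dd V \right) S_{p_1q_1}\cdots S_{p_kq_k}\,\dd S$, summed over all $p_a, q_a$.

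Next I would invoke the classical Weingarten formula for the unitary group: $\int_{\UM(N)} \prod_a V_{i_a p_a} \overline{V_{j_a q_a}}\,\dd V = \sum_{\alpha,\beta \in S_k} \prod_a \delta_{i_a, j_{\alpha(a)}} \delta_{p_a, q_{\beta(a)}} \, \UW(\beta\alpha^{-1})$, where $\UW$ is the unitary Weingarten function. Substituting this in, the $\delta_{p_a, q_{\beta(a)}}$ factors force $p_a = q_{\beta(a)}$ for all $a$; relabelling the summation over $(p,q)$ and using the defining invariance/permutation-covariance of the integrals $\int S_{p_1q_1}\cdots S_{p_kq_k}\,\dd S$ under simultaneous relabelling of indices, the inner $S$-integral collapses — after reindexing — to $\SW(\gamma)$ for an appropriate $\gamma \in S_k$ depending on $\alpha$ and $\beta$. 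Carefully tracking the bookkeeping of which permutation appears, and using that $\SW$ depends only on the conjugacy-type data encoded via the single permutation argument in \cref{def:weingarten}, the double sum over $(\alpha,\beta)$ should telescope to a single sum over $\sigma \in S_k$ with the $\delta_{i_{\sigma(1)},j_1}\cdots\delta_{i_{\sigma(k)},j_k}$ pattern stated in the theorem, with $\SW(\sigma)$ as coefficient.

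An alternative — and perhaps cleaner — route avoids the unitary Weingarten calculus entirely: decompose $L^2(\SM(M,N))$ into $\UM(N)$-isotypic components, or more directly, observe that the linear functional $T \mapsto \int_{\SM(M,N)} T \,\dd S$ restricted to the span of degree-$(k,k)$ monomials in the matrix elements is $\UM(N)$-invariant, hence factors through the space of invariants. The invariants in $(\mathbb{C}^N)^{\otimes k} \otimes (\overline{\mathbb{C}^N})^{\otimes k}$ are spanned (for $k \le N$) by the permutation tensors indexed by $S_k$, which is exactly the content of the right-hand side; matching coefficients against the basis and using \cref{def:weingarten} to identify each coefficient as $\SW(\sigma)$ finishes the argument. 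I expect the main obstacle to be purely notational: keeping the index contractions straight through the averaging step and verifying that the composite permutation that emerges is precisely $\sigma$ with the stated placement of Kronecker deltas, rather than its inverse or a conjugate. Checking the formula against a small case, say $k=1$ (where it reads $\int S_{i_1 j_1}\,\dd S = \delta_{i_1 j_1}\,\SW((\,)\text{ of size }1) = \delta_{i_1 j_1}\frac{M}{N}$), serves as a useful consistency test.
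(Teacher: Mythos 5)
Your second (invariant-theoretic) route is sound and is genuinely different from the paper's argument. The paper instead writes $S = U I_{M,N} U^*$, re-expresses both sides of the claimed identity as integrals over $\UM(N)$, and applies the known convolution formula for $\UM(N)$ of Collins--\'Sniady to both sides, so that no Weingarten values ever need to be evaluated or matched. Your route treats $T = \int_{\SM(M,N)} S^{\otimes k}\,\dd S$ as an element of the commutant of the diagonal $\UM(N)$-action on $(\mathbb{C}^N)^{\otimes k}$, which by Schur--Weyl is spanned by the permutation operators (a basis when $k \leq N$, the standing assumption in the paper), and then pins the coefficients down by specialising to $i_a = a$, $j_a = \sigma(a)$: the delta pattern then selects a single permutation and the left side is $\SW(\sigma)$ by \cref{def:weingarten}. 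This is complete in outline and arguably cleaner; what the paper's route buys is that it stays entirely elementary, leaning only on the unitary convolution formula rather than on the description of the commutant.

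Your first route, however, has a concrete gap at the ``collapse'' step. After the factors $\delta_{p_a, q_{\beta(a)}}$ are imposed, a free summation over $q_1, \ldots, q_k$ remains, and the inner quantity is $\sum_{q_1, \ldots, q_k} \int_{\SM(M,N)} S_{q_{\beta(1)} q_1} \cdots S_{q_{\beta(k)} q_k} \, \dd S$, which is \emph{not} a single elementary integral $\SW(\gamma)$: already for $k=1$ and $\beta = \mathrm{id}$ it equals $\int \mathrm{Tr}(S)\,\dd S = M = N \SW((1))$. Permutation-relabelling invariance cannot rescue this, since the $q$-tuples in the sum have varying coincidence patterns. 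The correct evaluation uses $S^2 = S$ and $\mathrm{Tr}(S) = M$ pointwise: the sum over $q$ equals $\prod_{c \,\in\, \mathrm{cycles}(\beta)} \mathrm{Tr}\big(S^{|c|}\big) = M^{\#\mathrm{cycles}(\beta)}$, a constant. Carrying this through gives the right-hand side as a sum of delta patterns with explicit coefficients of the form $\sum_{\beta} M^{\#\mathrm{cycles}(\beta)} \UW(\cdot)$, and you must then still identify these coefficients with $\SW(\sigma)$ --- which can be done by the same index specialisation as in your second route (and, pleasantly, this identification is equivalent to \cref{prop:weingartenjucys} via Jucys's identity $\prod_i (M + J_i) = \sum_\beta M^{\#\mathrm{cycles}(\beta)} \beta$ and Novak's formula for $\UW$). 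So the first route is repairable, but not as written.
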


\begin{proof}
Write $S \in \SM(M,N)$ in the form $S = UI_{M,N}U^*$ for $U \in \UM(N)$. Then the two sides of the desired equation can be equivalently expressed as integrals over $\UM(N)$.
\begin{align*}
\text{LHS} &= \sum_{m_1, \ldots, m_k = 1}^N \bigg(\prod_{i=1}^k \left[I_{M,N}\right]_{m_im_i} \bigg) \int_{\UM(N)} \um_{i_1m_1} \cdots \um_{i_km_k} \um_{m_1j_1}^* \cdots \um_{m_kj_k}^* \, \dd \um \\
\text{RHS} &= \sum_{\sigma \in \SG_k} \bigg( \prod_{a=1}^k \delta_{i_{\sigma(a)},j_a} \bigg) \sum_{m_1, \ldots, m_k = 1}^N \bigg(\prod_{i=1}^k \left[I_{M,N}\right]_{m_im_i} \bigg) \int_{\UM(N)} \um_{1m_1} \cdots \um_{km_k} \um_{m_1,\sigma(1)}^* \cdots \um_{m_k,\sigma(k)}^* \, \dd \um
\end{align*}
Thus, the result would follow from the equation
\begin{multline*}
\int_{\UM(N)} \um_{i_1m_1} \cdots \um_{i_km_k} \um_{m_1j_1}^* \cdots \um_{m_kj_k}^* \, \dd \um \\
= \sum_{\sigma \in \SG_k} \delta_{i_{\sigma(1)},j_1} \cdots \delta_{i_{\sigma(k)},j_k} \int_{\UM(N)} \um_{1m_1} \cdots \um_{km_k} \um_{m_1,\sigma(1)}^* \cdots \um_{m_k,\sigma(k)}^* \, \dd \um.
\end{multline*}
However, this is a direct consequence of applying the convolution formula for $\UM(N)$~\cite[Corollary~2.4]{col-sni06} to both sides.
\end{proof}

The following orthogonality relations for $\SW$ are inspired by the orthogonality relations obtained by Collins and Matsumoto in other settings for Weingarten calculus~\cite{col-mat17}. The proof crucially relies on the convolution formula of \cref{thm:convolution}. For future reference, observe that we use the notational convention of composing permutations from right to left.

\begin{theorem}[Orthogonality relations] \label{thm:orthogonality}
For each permutation $\sigma \in S_k$, the Weingarten function satisfies the relation
\[
\SW(\sigma) = -\frac{1}{N} \sum_{i=1}^{k-1} \SW(\sigma \circ (i~k)) + \delta_{\sigma(k),k} \, \frac{M}{N} \SW(\sigma^\downarrow) + \frac{1}{N} \sum_{i=1}^{k-1} \delta_{\sigma(i),k} \SW([\sigma \circ (i~k) ]^\downarrow).
\]
Here, for any permutation $\rho \in \SG_k$ that fixes $k$, we write $\rho^\downarrow \in \SG_{k-1}$ to denote the permutation that agrees with $\rho$ on the set $\{1, 2, \ldots, k-1\}$.
\end{theorem}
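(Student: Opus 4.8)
The plan is to deduce the relation from the convolution formula of \cref{thm:convolution}, using the idempotency $S^2=S$ and the identity $\operatorname{tr} S=M$ to manufacture monomial integrals to which that formula applies; the cases $\sigma(k)\neq k$ and $\sigma(k)=k$ are treated separately.

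Suppose first $\sigma(k)\neq k$ and set $a_0:=\sigma^{-1}(k)\in\{1,\dots,k-1\}$. In $\prod_{a=1}^k S_{a,\sigma(a)}$ the two distinct factors $S_{k,\sigma(k)}$ and $S_{\sigma(k),\sigma^2(k)}$ chain at the index $\sigma(k)$, and their product is the $w=\sigma(k)$ term of $\sum_{w=1}^N S_{k,w}S_{w,\sigma^2(k)}=S_{k,\sigma^2(k)}$; using this, the $(k-1)$-fold integral
\[
J:=\int_{\SM(M,N)}\Big(\prod_{a\in\{1,\dots,k\}\setminus\{k,\sigma(k)\}}S_{a,\sigma(a)}\Big)S_{k,\sigma^2(k)}\,\dd S
\]
can be rewritten as $\sum_{w=1}^N$ of an integral of a $k$-fold monomial in the $S_{ij}$. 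I would then evaluate $J$ in two ways. Its row- and column-index sets both equal $\{1,\dots,k\}\setminus\{\sigma(k)\}$, so by invariance of $\dd S$ under coordinate permutations one recognises $J=\SW\!\big([\sigma\circ(a_0~k)]^\downarrow\big)$. On the other hand, applying \cref{thm:convolution} to each $k$-fold monomial, summing over $w$, and running a short case analysis on the value $\pi^{-1}(k)$ of the permutation $\pi\in\SG_k$ returned by the formula, one finds that precisely two families of terms survive: a term $N\,\SW(\sigma)$, from the unique $\pi$ (which works out to be $\sigma$ itself) for which $w$ is left unconstrained, and terms $\SW(\sigma\circ(i~k))$ for $i=1,\dots,k-1$, from the $\pi$ for which $w$ is pinned to a particular value. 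Equating the two evaluations gives $\SW\!\big([\sigma\circ(a_0~k)]^\downarrow\big)=N\,\SW(\sigma)+\sum_{i=1}^{k-1}\SW(\sigma\circ(i~k))$, which rearranges to the claimed identity once one notes that the $\delta_{\sigma(k),k}$ term is absent and the final sum collapses to its $i=a_0$ term.

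When $\sigma(k)=k$, so that $\sigma^\downarrow\in\SG_{k-1}$ is defined, I would instead start from $M\,\SW(\sigma^\downarrow)=\int_{\SM(M,N)}\big(\sum_{c=1}^N S_{cc}\big)\prod_{a<k}S_{a,\sigma(a)}\,\dd S$ and split the sum over $c$. For $c\geq k$, invariance of $\dd S$ under the transposition of the coordinates $c$ and $k$ gives $\int S_{cc}\prod_{a<k}S_{a,\sigma(a)}\,\dd S=\SW(\sigma)$, contributing $(N-k+1)\,\SW(\sigma)$ in total. For each $c<k$, the integrand $S_{cc}\,S_{c,\sigma(c)}\prod_{a<k,\,a\neq c}S_{a,\sigma(a)}$ is a $k$-fold monomial, and \cref{thm:convolution} together with the analogous case analysis on $\pi^{-1}(k)$ yields $\int S_{cc}\prod_{a<k}S_{a,\sigma(a)}\,\dd S=\SW(\sigma)+\SW\!\big(\sigma\circ(\sigma^{-1}(c)~k)\big)$. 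Summing over $c<k$ and reindexing via $i=\sigma^{-1}(c)$ gives $M\,\SW(\sigma^\downarrow)=N\,\SW(\sigma)+\sum_{i=1}^{k-1}\SW(\sigma\circ(i~k))$, the claimed identity in this case; for $k=1$ one only needs the base value $\SW((\,))=1$.

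The step I expect to be the main obstacle is the combinatorial bookkeeping in the two case analyses: deciding which permutations $\pi\in\SG_k$ produced by \cref{thm:convolution} contribute a nonzero product of Kronecker deltas, determining in each instance whether the auxiliary index ($w$, resp.\ $c$) is free (yielding a factor $N$) or pinned, identifying the resulting $\pi$ in terms of $\sigma$ and the transpositions $(i~k)$, and finally checking that the surviving terms assemble — with the correct signs and with the coefficients $-\tfrac1N$, $\tfrac MN$, $\tfrac1N$ — into exactly the stated right-hand side, while verifying that the reduced integrals really are Weingarten values at level $k-1$.
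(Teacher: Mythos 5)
Your proposal is correct and takes essentially the same route as the paper: in each of the two cases one evaluates an auxiliary summed integral in two ways, once using $\mathrm{Tr}(S)=M$ (for $\sigma(k)=k$) or $S^2=S$ (for $\sigma(k)\neq k$) and once via the convolution formula of \cref{thm:convolution}, and the Kronecker-delta bookkeeping you defer does work out exactly as you claim (the only surviving $\pi$ are $\sigma$, contributing for every value of the auxiliary index, and $\sigma\circ(i~k)$, each contributing for exactly one value). The sole cosmetic difference is that for $\sigma(k)\neq k$ you chain at $S_{k,\sigma(k)}S_{\sigma(k),\sigma^2(k)}$ rather than at $S_{\sigma^{-1}(k),k}S_{k,\sigma(k)}$ as the paper does, so your reduced $(k-1)$-fold integral lives on the index set $\{1,\dots,k\}\setminus\{\sigma(k)\}$ and you need the (valid) extra step of relabelling and conjugacy invariance of $\SW$ to identify it with $\SW\bigl([\sigma\circ(\sigma^{-1}(k)~k)]^\downarrow\bigr)$, whereas the paper's choice of chaining makes this identification immediate.
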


\begin{proof}
Let $\sigma \in S_k$ and consider the cases $\sigma(k) = k$ and $\sigma(k) \neq k$ separately.

{\em Case 1.} Suppose $\sigma(k) = k$. \\
Consider the integral
\begin{equation*} \label{eq:orthogonality1}
\sum_{i=1}^N \int_{\SM(M,N)} \sm_{1,\sigma(1)} \sm_{2,\sigma(2)} \cdots \sm_{k-1,\sigma(k-1)} \sm_{i,i} \, \dd \sm. \tag{$\ast$}
\end{equation*}
On the one hand, we can use $\sum_{i=1}^N \sm_{ii} = \mathrm{Tr}(\sm) = \mathrm{Tr}(UI_{M,N}U^*) = \mathrm{Tr}(I_{M,N}) = M$ and the definition of $\SW$ to express the integral as $M \SW(\sigma^\downarrow)$. On the other hand, we can apply the convolution formula directly to each summand. For the $i$th summand, where $1 \leq i < k$, the convolution formula yields
\[
\int_{ \SM(M,N) } \sm_{1,\sigma(1)} \cdots S_{i,\sigma(i)} \cdots \sm_{k-1,\sigma(k-1)} \sm_{i,i} \, \dd \sm = \SW(\sigma) + \SW(\sigma \circ (i~k)).
\]
For the $i$th summand, where $k \leq i \leq N$, the convolution formula yields
\[
\int_{ \SM(M,N) } \sm_{1, \sigma(1)} \sm_{2, \sigma(2)} \cdots \sm_{k-1, \sigma(k-1)} \sm_{i,i} \, \dd \sm = \SW(\sigma).
\]
Adding these contributions over $i = 1, 2, \ldots, N$ and equating with the expression we previously obtained for the integral~($\ast$) leads to
\begin{align} \label{eq:weingarten1}
M \SW(\sigma^\downarrow) &= N \SW(\sigma) + \sum_{i=1}^{k-1} \SW(\sigma \circ (i~k)) \notag \\
\Rightarrow \qquad \SW(\sigma) &= - \frac{1}{N} \sum_{i=1}^{k-1} \SW(\sigma \circ (i~k)) + \frac{M}{N} \SW(\sigma^\downarrow).
\end{align}

{\em Case 2.} Suppose $\sigma(k) \neq k$. \\
Let $j = \sigma^{-1}(k)$ and consider the integral
\begin{equation*} \label{eq:orthogonality2}
\sum_{i=1}^{N} \int_{\SM(M,N)} \left( \sm_{1, \sigma(1)} \cdots \sm_{j-1, \sigma(j-1)} \right) \sm_{j,i} \left( \sm_{j+1, \sigma(j+1)} \cdots \sm_{k-1, \sigma(k-1)} \right) \sm_{i, \sigma(k)} \, \dd S. \tag{$\ast\ast$}
\end{equation*}
On the one hand, we have $S^2 = S$ for all $S \in \SM(M,N)$, so it follows that $\sum_{i=1}^{N} \sm_{j,i} \sm_{i, \sigma(k)} = \sm_{j, \sigma(k)}$. Combining this observation with the convolution formula allows us to express the integral as
\[
\int_{\SM(M,N)} \sm_{1, \sigma(1)} \cdots \sm_{j-1, \sigma(j-1)} \sm_{j, \sigma(k)} \sm_{j+1, \sigma(j+1)} \cdots \sm_{k-1, \sigma(k-1)} \, \dd \sm = \SW(\left[\sigma \circ (j~k)\right]^\downarrow).
\]
On the other hand, we can apply the convolution formula directly to each summand, resulting in a calculation analogous to that of Case 1. For the $i$th summand, where $1 \leq i < k$, the convolution formula yields
\[
\int_{\SM(M,N)} \left( \sm_{1, \sigma(1)} \cdots \sm_{j-1, \sigma(j-1)} \right) \sm_{j,i} \left( \sm_{j+1, \sigma(j+1)} \cdots \sm_{k-1, \sigma(k-1)} \right) \sm_{i, \sigma(k)} \, \dd S = \SW(\sigma) + \SW(\sigma \circ (i~k)).
\]
For the $i$th summand, where $k \leq i \leq N$, the convolution formula yields
\[
\int_{\SM(M,N)} \left( \sm_{1, \sigma(1)} \cdots \sm_{j-1, \sigma(j-1)} \right) \sm_{j,i} \left( \sm_{j+1, \sigma(j+1)} \cdots \sm_{k-1, \sigma(k-1)} \right) \sm_{i, \sigma(k)} \, \dd S = \SW(\sigma).
\]
Adding these contributions over $i = 1, 2, \ldots, N$ and equating with the expression we previously obtained for the integral~($\ast\ast$) leads to
\begin{align} \label{eq:weingarten2}
\SW(\left[\sigma \circ (j~k)\right]^\downarrow) &= N \SW(\sigma) + \sum_{i=1}^{k-1} \SW(\sigma \circ (i~k)) \notag \\
\Rightarrow \qquad \SW(\sigma) &= -\frac{1}{N} \sum_{i=1}^{k-1} \SW(\sigma \circ (i~k)) + \frac{1}{N} \SW(\left[\sigma \circ (j~k)\right]^\downarrow).
\end{align}

Finally, the desired result is obtained by writing the two expressions for $\SW(\sigma)$ obtained in \cref{eq:weingarten1,eq:weingarten2} from the two separate cases in one formula, making use of the Kronecker delta notation.
\end{proof}

The orthogonality relations provide a non-degenerate linear system of equations that uniquely determines the Weingarten function. The example below shows that values of the Weingarten function can be computed explicitly and are rational functions of $M$ and $N$.

\begin{example} \label{ex:weingarten}
By the orthogonality relations of \cref{thm:orthogonality} and the conjugacy invariance of $\SW$, we obtain the following equations.
\begin{align*}
\SW((1)(2)(3)) &= -\frac{1}{N} \left[ \SW((1~3)(2)) + \SW((2~3)(1)) \right] + \frac{M}{N} \SW((1)(2)) \\
&= -\frac{2}{N} \SW((1~2)(3)) + \frac{M}{N} \SW((1)(2)) \\
\SW((1~2)(3)) &= -\frac{1}{N} \left[ \SW((1~3~2)) + \SW((1~2~3) \right] + \frac{M}{N} \SW((1~2)) \\
&= -\frac{2}{N} \SW((1~2~3)) + \frac{M}{N}\SW((1~2)) \\
\SW((1~2~3)) &= -\frac{1}{N} \left[ \SW((2~3)(1)) + \SW((1~2)(3) \right] + \frac{1}{N} \SW((1~2)) \\
&= -\frac{2}{N} \SW((1~2)(3)) + \frac{1}{N} \SW((1~2))
\end{align*}

Using the values $\SW((1)(2)) = \frac{M(MN-1)}{N (N^2-1)}$ and $\SW((12)) = \frac{-M(M - N)}{N (N^2-1)}$, one can solve this linear system of equations to obtain the following unique solution.
\begin{align*}
\SW((1)(2)(3)) &= \frac{-2(MN-2)}{N(N^2-4)} \SW((1~2)) + \frac{M}{N} \SW((1)(2)) = \frac{M(M^2N^2 - 2M^2 - 3MN + 4)}{N (N^2-1) (N^2-4)} \\
\SW((1~2)(3)) &= \frac{MN-2}{N^2-4} \SW((1~2)) = \frac{-M (M-N) (MN-2)}{N (N^2-1) (N^2-4)} \\
\SW((1~2~3))) &=\frac{N-2M}{N^2-4}\SW((1~2)) = \frac{M (M-N) (2M-N)}{N (N^2-1) (N^2-4)}
\end{align*}
In this way, one can begin with the base case $\SW((\,)) = 1$ and inductively obtain $\SW(\sigma)$ for $\sigma\in\SG_k$ in terms of $\SW(\sigma')$ for $\sigma'\in\SG_{k-1}$. Further values can be found in \cref{app:data}.
\end{example}

\subsection{Large \texorpdfstring{$N$}{N} expansion}

A priori, computing the values of the Weingarten function $\SW$ via the integral definition is far from straightforward. However, as evidenced by the calculations of \cref{ex:weingarten}, the orthogonality relations of \cref{thm:orthogonality} uniquely determine the Weingarten function and imply that its values are rational functions of $M$ and $N$. We will shortly see that their structure also leads directly to a large $N$ expansion for $\SW(\sigma)$, with coefficients that enumerate factorisations of $\sigma$ into transpositions that satisfy a certain monotonicity condition. This combinatorial structure can be understood in terms of paths in the Weingarten graph, which encode the result of recursively applying the orthogonality relations ad infinitum. The notion of a Weingarten graph was originally introduced by Collins and Matsumoto~\cite{col-mat17}. In the setting of integration over unitary groups, the Weingarten graph $\mathcal{G}^\UM$ has two types of edges, which reflects the fact that the orthogonality relations in that case express the Weingarten function of a permutation in terms of two types of terms. In the setting of integration over $\SM(M,N)$, we have three terms appearing on the right side of the orthogonality relations, motivating the following definition.

\begin{definition}
Define the {\em Weingarten graph} $\mathcal{G}^{\SM}$ to be the infinite directed graph with vertex set $\SG = \bigsqcup_{i=0}^\infty \SG_i$ and edge set $E = E_A \sqcup E_B \sqcup E_C$, where:
\begin{itemize}
\item the set $E_A$ comprises the ``type $A$'' edges, which are of the form 
\[\begin{tikzcd}
	\sigma & & {\sigma \circ (i~k)}
	\arrow[-stealth, very thick, blue!90!black, from=1-1, to=1-3]
\end{tikzcd}\]
for $\sigma \in \SG_k$ and $1 \leq i <k$;
\item the set $E_B$ comprises the ``type $B$'' edges, which are of the form
\[\begin{tikzcd}
	\sigma & & \sigma^\downarrow
	\arrow[-stealth, very thick, red!90!black, dashed, from=1-1, to=1-3]
\end{tikzcd}\]
for $\sigma \in \SG_k$ with $\sigma(k)=k$; and
\item the set $E_C$ comprises the ``type $C$'' edges, which are of the form
\[\begin{tikzcd}
	\sigma & & {[\sigma\circ(i~k)]^\downarrow}
	\arrow[-stealth, very thick, green!80!black, dotted, from=1-1, to=1-3]
\end{tikzcd}\]
for $\sigma\in \SG_k$ such that $\sigma(i) = k$ for some $1 \leq i <k$.
\end{itemize}
\end{definition}

\begin{figure} [ht!]
\centering
\begin{tikzpicture}[
downstyle/.style={very thick, red!90!black, dashed, decoration={markings, mark= at position 0.55 with {\arrow{stealth}}}, postaction={decorate}},
sidestyle/.style={very thick, blue!90!black, decoration={markings, mark= at position 0.35 with {\arrowreversed{stealth}}, mark= at position 0.65 with {\arrow{stealth}}}, postaction={decorate}},
downsidestyle/.style={very thick, green!80!black, dotted, decoration={markings, mark= at position 0.55 with {\arrow{stealth}}}, postaction={decorate}}
]

\def\x{1.25}
\def\y{1.5}

\draw[downstyle] (0,\y) -- (0,0);

\draw[downstyle] (-\x,2*\y) -- (0,\y);
\draw[downsidestyle] (\x,2*\y) -- (0,\y);

\draw[downsidestyle] (-5*\x,3*\y) -- (-\x,2*\y);
\draw[downstyle] (-3*\x,3*\y) -- (-\x,2*\y);
\draw[downsidestyle] (-\x,3*\y) -- (-\x,2*\y);
\draw[downsidestyle] (\x,3*\y) -- (\x,2*\y);
\draw[downstyle] (3*\x,3*\y) -- (\x,2*\y);
\draw[downsidestyle] (5*\x,3*\y) -- (\x,2*\y);

\draw[sidestyle] (-\x,2*\y) -- (\x,2*\y);

\draw[sidestyle] (-5*\x,3*\y) -- (-3*\x,3*\y);
\draw[sidestyle] (-3*\x,3*\y) -- (-\x,3*\y);
\draw[sidestyle] (-\x,3*\y) -- (\x,3*\y);
\draw[sidestyle] (\x,3*\y) -- (3*\x,3*\y);
\draw[sidestyle] (3*\x,3*\y) -- (5*\x,3*\y);
\draw[sidestyle] (-5*\x,3*\y) to[out=20,in=160] (5*\x,3*\y);

\filldraw [fill=white,draw=white] (-5*\x,3*\y) ellipse (0.75 and 0.3);
\filldraw [fill=white,draw=white] (-3*\x,3*\y) ellipse (0.75 and 0.3);
\filldraw [fill=white,draw=white] (-\x,3*\y) ellipse (0.75 and 0.3);
\filldraw [fill=white,draw=white] (\x,3*\y) ellipse (0.75 and 0.3);
\filldraw [fill=white,draw=white] (3*\x,3*\y) ellipse (0.75 and 0.3);
\filldraw [fill=white,draw=white] (5*\x,3*\y) ellipse (0.75 and 0.3);
\filldraw [fill=white,draw=white] (-\x,2*\y) ellipse (0.6 and 0.3);
\filldraw [fill=white,draw=white] (\x,2*\y) ellipse (0.6 and 0.3);
\filldraw [fill=white,draw=white] (0,1*\y) ellipse (0.5 and 0.3);
\filldraw [fill=white,draw=white] (0,0) ellipse (0.35 and 0.3);

\node at (-5*\x,3*\y) {$(13)(2)$};
\node at (-3*\x,3*\y) {$(1)(2)(3)$};
\node at (-\x,3*\y) {$(23)(1)$};
\node at (\x,3*\y) {$(123)$};
\node at (3*\x,3*\y) {$(12)(3)$};
\node at (5*\x,3*\y) {$(132)$};

\node at (-\x,2*\y) {$(1)(2)$};
\node at (\x,2*\y) {$(12)$};

\node at (0,\y) {$(1)$};

\node at (0,0) {$(\,)$};
\end{tikzpicture}
\caption{The part of the Weingarten graph $\mathcal{G}^\SM$ induced by vertices belonging to $\SG_0 \sqcup \SG_1 \sqcup \SG_2 \sqcup \SG_3$.}
\end{figure}
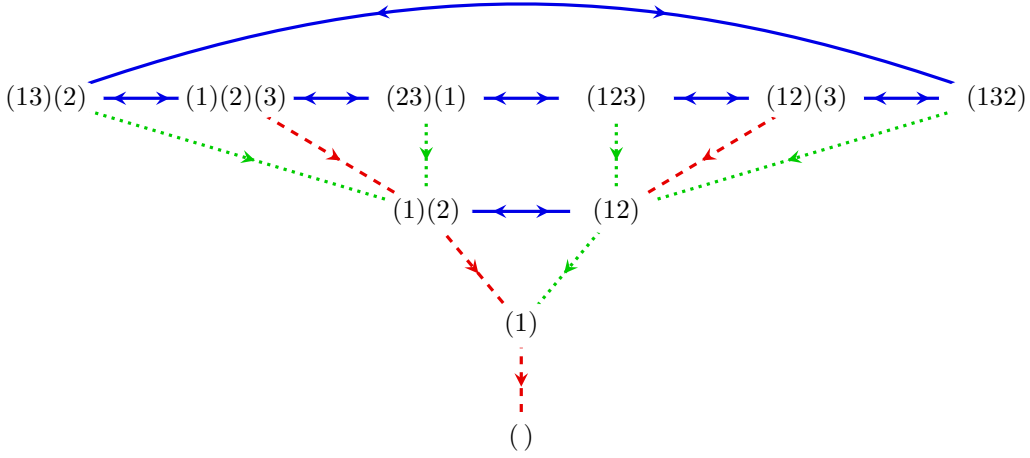

\begin{remark}
The Weingarten graph $\mathcal{G}^\UM$ in the setting of integration over $\UM(N)$ appears in~\cite{col-mat17} and is the subgraph of $\mathcal{G}^\SM$ obtained by removing all type $C$ edges.
\end{remark}

Repeated application of the orthogonality relations leads to paths in the Weingarten graph $\mathcal{G}^\SM$. In this way, one is motivated to enumerate paths in the Weingarten graph in which edges of types $A$, $B$, $C$ receive multiplicative weights $-\frac{1}{N}$, $\frac{M}{N}$, $\frac{1}{N}$, respectively.

\begin{definition}
A {\em path} in $\mathcal{G}^\SM$ is a sequence of permutations 
\[
\bm{\rho} = (\rho_0, \rho_1, \rho_2, \ldots, \rho_\ell) \in \SG^{\ell+1},
\]
where $(\rho_{i-1}, \rho_{i})$ is a directed edge of $\mathcal{G}^\SM$ for each $i = 1, 2, \ldots, \ell$. We call the integer $\ell = \ell(\bm{\rho})$ the {\em length} of the path and denote by $\mathcal{P}(\sigma, \sigma^\prime)$ the set of all paths from $\sigma$ to $\sigma'$. Define the {\em weight} $w(\bm{\rho})$ of a path $\bm{\rho}$ by 
\[
\left( -\frac{1}{N} \right)^{\ell_A(\bm{\rho})} \left( \frac{M}{N} \right)^{\ell_B(\bm{\rho})} \left( \frac{1}{N} \right)^{\ell_C(\bm{\rho})},
\]
where $\ell_K(\bm{\rho})$ denotes the number of edges of type $K\in \{A,B,C\}$ on the path $\bm{\rho}$.
\end{definition}

By construction, the Weingarten graph is a combinatorial encoding of the orthogonality relations and their repeated application. Starting with $\sigma \in \SG_k$, iterating the orthogonality relations $\ell$ times produces the formula
\[
\SW(\sigma) = \sum_{\substack{\bm{\rho} \in \mathcal{P}(\sigma,(\,)) \\ \ell(\bm{\rho}) \leq \ell }} w(\bm{\rho}) + \sum_{\sigma' \in \SG_1 \sqcup \cdots \sqcup \SG_k} \SW(\sigma') \sum_{\substack{\bm{\rho} \in \mathcal{P}(\sigma, \sigma') \\ \ell(\bm{\rho}) = \ell}} w(\bm{\rho}).
\]
By sending the number of iterations $\ell$ to infinity, one obtains the following large $N$ expansion of the Weingarten function.

\begin{proposition}
For any permutation $\sigma$, we have the large $N$ expansion 
\[
\SW(\sigma) = \sum_{\bm{\rho} \in \mathcal{P}(\sigma,(\,))} w(\bm{\rho}) = \sum_{\bm{\rho} \in \mathcal{P}(\sigma,(\,))} \bigg( -\frac{1}{N} \bigg)^{\ell_A(\bm{\rho})} \bigg( \frac{M}{N} \bigg)^{\ell_B(\bm{\rho})} \bigg( \frac{1}{N} \bigg)^{\ell_C(\bm{\rho})}.
\]
\end{proposition}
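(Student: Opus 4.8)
The plan is to let $\ell \to \infty$ in the identity stated just above the proposition, namely
\[
\SW(\sigma) = \sum_{\substack{\bm{\rho} \in \mathcal{P}(\sigma,(\,)) \\ \ell(\bm{\rho}) \leq \ell }} w(\bm{\rho}) + \sum_{\sigma' \in \SG_1 \sqcup \cdots \sqcup \SG_k} \SW(\sigma') \sum_{\substack{\bm{\rho} \in \mathcal{P}(\sigma, \sigma') \\ \ell(\bm{\rho}) = \ell}} w(\bm{\rho}),
\]
which holds for every $\ell \geq 0$ because one application of the orthogonality relations of \cref{thm:orthogonality} rewrites $\SW(\tau)$, for any vertex $\tau$ of $\mathcal{G}^\SM$, as the sum of $w(e)\,\SW(\tau')$ over the out-edges $e\colon \tau \to \tau'$ of $\mathcal{G}^\SM$. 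It therefore suffices to prove that (a) the series $\sum_{\bm{\rho} \in \mathcal{P}(\sigma, \sigma')} w(\bm{\rho})$ converges absolutely for every target $\sigma'$, and (b) the remainder sum tends to $0$ as $\ell \to \infty$. Granting these, the first term (a finite partial sum, since out-degrees are finite) converges to $\sum_{\bm{\rho} \in \mathcal{P}(\sigma,(\,))} w(\bm{\rho})$ and the second vanishes, which is the assertion.

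For (a) I would exploit the rigid level structure of $\mathcal{G}^\SM$. Every edge is either of type $A$, staying within $\SG_j$, or of type $B$ or $C$, dropping from $\SG_j$ to $\SG_{j-1}$; a short check shows that each vertex of $\SG_j$ has exactly $j-1$ out-edges of type $A$ (the edge $\sigma \mapsto \sigma \circ (i~j)$ for $1 \leq i < j$) and exactly one ``descending'' out-edge, of type $B$ when $\sigma(j) = j$ and of type $C$, with $i = \sigma^{-1}(j)$, otherwise. Hence a path from $\sigma \in \SG_k$ to $\sigma' \in \SG_{k'}$ is precisely a type-$A$ walk inside level $k$, then the forced descending edge, then a type-$A$ walk inside level $k-1$, and so on down to a type-$A$ walk inside level $k'$. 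A type-$A$ walk of length $m$ inside $\SG_j$ has weight of modulus $N^{-m}$, and from a given vertex there are at most $(j-1)^m$ of them, while each descending edge has weight of modulus $\max(M/N, 1/N) \leq 1$; summing over the innermost walk and working outward therefore gives
\[
\sum_{\bm{\rho} \in \mathcal{P}(\sigma, \sigma')} |w(\bm{\rho})| \leq \prod_{j=\max(k',1)}^{k} \sum_{m=0}^{\infty} \left( \frac{j-1}{N} \right)^{m} = \prod_{j=\max(k',1)}^{k} \frac{N}{N-j+1} < \infty,
\]
the product being finite because $j - 1 \leq k - 1 < N$ under the standing assumption $k \leq N$. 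Taking $\sigma' = (\,)$ in particular gives absolute convergence of the series in the statement.

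For (b), observe that $\SG_1 \sqcup \cdots \sqcup \SG_k$ is a finite set and each $\SW(\sigma')$ is a fixed scalar, so it is enough to know that the sum of $|w(\bm{\rho})|$ over $\bm{\rho} \in \mathcal{P}(\sigma, \sigma')$ of length exactly $\ell$ tends to $0$ for each such $\sigma'$; but this is just the statement that the tail of the convergent series in (a) vanishes. Letting $\ell \to \infty$ in the displayed identity then yields $\SW(\sigma) = \sum_{\bm{\rho} \in \mathcal{P}(\sigma, (\,))} w(\bm{\rho})$, as required.

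The step I expect to require genuine care, rather than mere bookkeeping, is the absolute-convergence estimate in (a). The naive bound --- out-degree at most $k$, hence at most $k^\ell$ paths of length $\ell$, each of weight $\lesssim N^{k-\ell}$ --- gives a convergent geometric series only when $N > k$, and so breaks down exactly at the boundary $N = k$ of the standing hypothesis. Getting convergence for all $N \geq k$ seems to need the sharper observation that the geometric ratios $\tfrac{j-1}{N}$ appear level by level and that the descending edges, being unique at each vertex, introduce no branching at all; this is what keeps the product $\prod_{j} \tfrac{N}{N-j+1}$ finite throughout the range $N \geq k$.
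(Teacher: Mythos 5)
Your proposal is correct and follows essentially the same route as the paper, which obtains the $\ell$-fold iteration identity from the orthogonality relations encoded in the Weingarten graph and then passes to the limit $\ell \to \infty$ without spelling out the analytic details. Your added bookkeeping --- that a vertex of $\SG_j$ has exactly $j-1$ type-$A$ out-edges and a unique descending (type-$B$ or type-$C$) out-edge of weight at most $1$, giving the absolute-convergence bound $\prod_{j} \frac{N}{N-j+1}$ valid precisely in the standing range $k \leq N$, together with the vanishing of the length-$\ell$ remainder --- simply makes rigorous the convergence step the paper leaves tacit, and it checks out.
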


At this point, let us consider the Weingarten function in the regime of large $N$ with fixed ratio $q = \frac{M}{N}$.\footnote{One could of course consider other regimes, such as fixed $M$, although this line of investigation did not appear to be as fruitful.} For $\bm{\rho} \in \mathcal{P}(\sigma,(\,))$ and $\sigma \in \SG_k$, we have $\ell_B(\bm{\rho}) + \ell_C(\bm{\rho}) = k$, which leads to
\[
\SW(\sigma) = q^k \sum_{r=0}^\infty \left( -\frac{1}{N} \right)^r \sum_{\substack{\bm{\rho} \in \mathcal{P}(\sigma,(\,)) \\ \ell_A(\bm{\rho}) + \ell_C(\bm{\rho}) = r }} \left( -\frac{1}{q} \right)^{\ell_C(\bm{\rho})}.
\]
To develop a clearer combinatorial description of the coefficients appearing in the expansion above, we consider the correspondence between paths in the Weingarten graph and monotone factorisations. This connection already appears in the Weingarten calculus for unitary groups~\cite{col-mat17}.

\begin{definition} \label{def:monotone}
A {\em monotone factorisation} of $\sigma \in \SG_k$ is a sequence $\bm{\tau} = (\tau_1, \tau_2, \ldots, \tau_r)$ of transpositions in $\SG_k$ such that
\begin{itemize}
\item $\tau_1 \tau_2 \cdots \tau_r = \sigma$; and
\item if we write $\tau_i = (a_i ~ b_i)$ with $a_i < b_i$, then $b_1 \leq b_2 \leq \cdots \leq b_r$.
\end{itemize}
Moreover, we call a monotone factorisation {\em transitive} if $\tau_1, \tau_2, \ldots, \tau_r$ generate a transitive subgroup of $\SG_k$. Denote the set of monotone factorisations of $\sigma$ by $\mathcal{M}(\sigma)$ and the length of $\bm{\tau}$ by $r(\bm{\tau})$. We refer to the number of distinct elements of the multiset $\{b_1, b_2, \ldots, b_r\}$ as the {\em hive number} of $\bm{\tau}$ and denote this quantity by $b(\bm{\tau})$.
\end{definition}

Given a path in the Weingarten graph $\mathcal{G}^\SM$ from $\sigma$ to $(\,)$, one can record the sequence of transpositions arising from the type $A$ edges and the type $C$ edges. The composition of these transpositions in reverse order recovers the permutation $\sigma$. Thus, a path $\bm{\rho} \in \mathcal{P}(\sigma,(\,))$ gives rise to a monotone factorisation $\bm{\tau}\in \mathcal{M}(\sigma)$ satisfying $\ell_A(\bm{\rho}) + \ell_C(\bm{\rho}) = r(\bm{\tau})$. Represent this construction via the map
\[
\mathcal{F}: \mathcal{P}(\sigma,(\,)) \to \mathcal{M}(\sigma).
\]

In the analogous construction for the unitary case, one obtains a one-to-one correspondence between paths and monotone factorisations, but that is not the case here. Given a monotone factorisation $\bm{\tau}$ of $\sigma$, there exists a unique path in $\mathcal{F}^{-1}(\bm{\tau})$ containing only edges of types $A$ and $B$. The number of pairs of consecutive $A$--$B$ edges in the path is equal to the hive number $b(\bm{\tau})$. Any such pair can be replaced by a type $C$ edge to obtain an element of $\mathcal{F}^{-1}(\bm{\tau})$ and every element of $\mathcal{F}^{-1}(\bm{\tau})$ can be obtained in this way. Thus, we have $|\mathcal{F}^{-1}(\bm{\tau})| = 2^{b(\bm{\tau})}$. Moreover, keeping track of the effect on edge weights, we have the equality
\[
\sum_{\bm{\rho} \in \mathcal{F}^{-1}(\bm{\tau})} \left(-\frac{1}{q}\right)^{\ell_C(\bm{\rho})} = \left(1 -\frac{1}{q}\right)^{b(\bm{\tau})}.
\]
These observations allow us to express the coefficients of the large $N$ expansion of $\SW(\sigma)$ in terms of monotone factorisations via the equation
\[
\SW(\sigma) = q^k \sum_{r=0}^\infty \left( -\frac{1}{N} \right)^r \sum_{\substack{\bm{\tau} \in \mathcal{M}(\sigma) \\ r(\bm{\tau}) = r }} \left( 1 -\frac{1}{q} \right)^{b(\bm{\tau})}.
\]

The discussion above motivates the following definition and theorem, whose proof is immediate after setting $t = 1 - \frac{1}{q}$ in the previous equation.

\begin{definition} \label{def:weightedcount}
For $r \geq 0$ and $\sigma$ a permutation, let $\dhn{r}{\sigma}$ denote the weighted count of monotone factorisations of~$\sigma$, where the weight of a monotone factorisation $\bm{\tau}$ is $t^{b(\bm{\tau})}$. Let $\hn{r}{\sigma}$ denote the analogous count restricted to transitive monotone factorisations.
\end{definition}

\begin{theorem} [Large $N$ expansion] \label{thm:fixTexpansion}
For a permutation $\sigma \in \SG_k$, we have the following large $N$ expansion for fixed $t = 1 - \frac{N}{M}$.
\begin{equation}
\label{eq:fixTexpansion}
\SW(\sigma) = \frac{1}{(1-t)^k} \sum_{r=0}^\infty \dhn{r}{\sigma} \left( -\frac{1}{N} \right)^r
\end{equation}
\end{theorem}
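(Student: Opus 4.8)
The theorem is essentially a restatement of the displayed identity
\[
\SW(\sigma) = q^k \sum_{r=0}^\infty \left(-\frac1N\right)^r \sum_{\substack{\bm{\tau} \in \mathcal{M}(\sigma) \\ r(\bm{\tau}) = r }} \left(1-\frac1q\right)^{b(\bm{\tau})},
\qquad q = \frac MN,
\]
derived in the discussion immediately preceding \cref{def:weightedcount}: once that identity is granted, one substitutes $t = 1-\frac1q = 1-\frac NM$, so that $q = \frac1{1-t}$, hence $q^k = \frac1{(1-t)^k}$ and $\bigl(1-\frac1q\bigr)^{b(\bm\tau)} = t^{b(\bm\tau)}$; by \cref{def:weightedcount} the inner sum is then exactly $\dhn{r}{\sigma}$, which is \cref{eq:fixTexpansion}. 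So the only genuine task is to make sure the preceding chain of reductions is in place, and I would present that chain as the proof.

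Concretely, I would first invoke the path-sum expansion $\SW(\sigma) = \sum_{\bm\rho \in \mathcal{P}(\sigma,(\,))} w(\bm\rho)$ obtained by iterating the orthogonality relations of \cref{thm:orthogonality} (with the residual term after $\ell$ iterations tending to $0$ for large $N$, as in the preceding Proposition). Next I would record that along any complete path $\bm\rho \in \mathcal{P}(\sigma,(\,))$ with $\sigma \in \SG_k$, type $A$ edges preserve the symmetric-group index while type $B$ and type $C$ edges each lower it by one; since the path runs from $\SG_k$ to $\SG_0$ this forces $\ell_B(\bm\rho)+\ell_C(\bm\rho) = k$, so that
\[
w(\bm\rho) = q^k \left(-\frac1N\right)^{\ell_A(\bm\rho)+\ell_C(\bm\rho)} \left(-\frac1q\right)^{\ell_C(\bm\rho)},
\]
and grouping paths by $r = \ell_A(\bm\rho)+\ell_C(\bm\rho)$ turns the path sum into $q^k \sum_r (-1/N)^r \sum_{\bm\rho:\,\ell_A+\ell_C=r}(-1/q)^{\ell_C(\bm\rho)}$. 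The combinatorial core is then the analysis of the fibres of $\mathcal{F}\colon \mathcal{P}(\sigma,(\,)) \to \mathcal{M}(\sigma)$ (which satisfies $\ell_A(\bm\rho)+\ell_C(\bm\rho) = r(\mathcal{F}(\bm\rho))$): for fixed $\bm\tau$, there is a unique lift in $\mathcal{F}^{-1}(\bm\tau)$ using only type $A$ and type $B$ edges, its number of ``$AB$-corners'' (a type $A$ edge immediately followed by a type $B$ edge, which can occur only at the last type $A$ step at a given level, just before descending) equals the hive number $b(\bm\tau)$, and each corner may be collapsed independently to one type $C$ edge. This gives $|\mathcal{F}^{-1}(\bm\tau)| = 2^{b(\bm\tau)}$, and tracking the effect on $\ell_C$,
\[
\sum_{\bm\rho \in \mathcal{F}^{-1}(\bm\tau)} \left(-\frac1q\right)^{\ell_C(\bm\rho)} = \sum_{j=0}^{b(\bm\tau)}\binom{b(\bm\tau)}{j}\left(-\frac1q\right)^j = \left(1-\frac1q\right)^{b(\bm\tau)},
\]
which collapses the inner sum over $\bm\rho$ into the inner sum over $\bm\tau$ displayed above; the substitution $t = 1-\frac1q$ then finishes the proof.

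The main obstacle is the fibre computation in the previous paragraph: one must pin down the distinguished $A$/$B$ lift of a monotone factorisation, verify that its $AB$-corners biject with the set of distinct values among $b_1,\dots,b_r$, and check that these corners can be replaced by type $C$ edges independently and with precisely the claimed effect on the weight $(-1/q)^{\ell_C}$. A secondary point needing care is the convergence statement underlying the path-sum expansion, namely that $\sum_{\sigma'} \SW(\sigma') \sum_{\bm\rho \in \mathcal{P}(\sigma,\sigma'),\,\ell(\bm\rho)=\ell} w(\bm\rho) \to 0$ as $\ell \to \infty$ for $N$ large; this follows from a crude bound on the number of length-$\ell$ paths out of $\sigma$ together with the constraint $\ell_B(\bm\rho) \leq k$. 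Everything after these two points is the one-line substitution recorded at the outset.
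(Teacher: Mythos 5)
Your proposal is correct and follows essentially the same route as the paper: the paper's proof of \cref{thm:fixTexpansion} is exactly the one-line substitution $t = 1 - \tfrac{1}{q}$ into the identity $\SW(\sigma) = q^k \sum_r (-1/N)^r \sum_{\bm{\tau}} (1-\tfrac1q)^{b(\bm{\tau})}$, which the paper obtains (as you do) from the iterated orthogonality relations, the constraint $\ell_B(\bm{\rho})+\ell_C(\bm{\rho})=k$, and the fibre analysis of $\mathcal{F}$ with $|\mathcal{F}^{-1}(\bm{\tau})| = 2^{b(\bm{\tau})}$ via replacing consecutive $A$--$B$ edge pairs by type $C$ edges. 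Your identification of the two points needing care (the corner-to-hive-number bijection and the convergence of the residual term) matches what the paper leaves implicit in its preceding discussion.
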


\subsection{Representation-theoretic interpretation}

The unitary invariance of the Haar measure implies that the Weingarten function $\SW$ is a function of permutations that is constant on conjugacy classes. It is natural to ask whether it admits a natural description in the class algebra of the symmetric group. Such a description of the unitary Weingarten function $\UW$ is well understood and given in terms of the Jucys--Murphy elements in the symmetric group algebra~\cite{mat-nov13,nov10}. We will show that the Weingarten function $\SW$ can be expressed similarly.

The {\em Jucys--Murphy elements} $J_1, J_2, \ldots, J_k$ are elements of the symmetric group algebra defined by
\[
J_i = (1~i) + (2~i) + \cdots + (i-1~i) \in \mathbb{C}[\SG_k],
\]
where we interpret the formula for $i = 1$ as $J_1 = 0$. They were introduced independently by Jucys~\cite{juc74} and Murphy~\cite{mur81} and their seemingly simple definition belies their remarkable properties. For example, they commute with each other and indeed, generate a maximal commutative subalgebra of $\mathbb{C}[\SG_k]$. Any symmetric function of the Jucys--Murphy elements lies in the class algebra $Z\mathbb{C}[\SG_k]$ and the class expansions of such expressions are of significant interest, appearing in various contexts~\cite{fer12}. Furthermore, the Jucys--Murphy elements are essential elements of the Okounkov--Vershik approach to the representation theory of symmetric groups~\cite{oko-ver96}. We will subsequently require the following results that date back to the seminal work of Jucys.

\begin{proposition} [Jucys~\cite{juc74}] \label{prop:jucys} ~
\begin{enumerate}[label=(\alph*)]
\item The Jucys--Murphy elements $J_1, J_2, \ldots, J_k \in \mathbb{C}[\SG_k]$ satisfy
\[
(x + J_1) (x + J_2) \cdots (x + J_k) = \sum_{\sigma \in S_k} x^{\# \mathrm{cycles}(\sigma)} \, \sigma.
\]
\item Let $\lambda$ be a partition of $k$ and let $\chi^\lambda$ the corresponding irreducible character of the symmetric group. We adopt the usual abuse of notation and consider $\chi^\lambda$ as an element of the class algebra $Z\mathbb{C}[\SG_k]$. If $f$ is a symmetric polynomial in $n$ variables, then
\[
f(J_1, J_2, \ldots, J_k) \, \chi^\lambda = f( \mathrm{cont}(\lambda) ) \, \chi^\lambda,
\]
where $\mathrm{cont}(\lambda)$ denotes the multiset of contents in the Young diagram for $\lambda$. (The content of a box in row $i$ and column $j$ of a Young diagram is the number $j-i$.)
\end{enumerate}
\end{proposition}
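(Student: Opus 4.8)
The plan is to treat the two parts separately, since both are classical: part (a) is a self-contained combinatorial induction, while part (b) reduces to the representation theory of the symmetric group.

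For part (a), I would induct on $k$. The base case $k=1$ reads $x + J_1 = x$, which matches the right-hand side since the unique element of $S_1$ has one cycle. For the inductive step, assume $(x+J_1)\cdots(x+J_{k-1}) = \sum_{\sigma} x^{c(\sigma)}\sigma$, the sum being over $\sigma \in S_{k-1}$ (viewed inside $S_k$ as the permutations fixing $k$) with $c(\sigma)$ the number of cycles of $\sigma$ as an element of $S_{k-1}$. Multiplying on the right by $x + J_k = x + \sum_{i=1}^{k-1}(i~k)$ produces two families of terms: the terms $x^{c(\sigma)+1}\sigma$, and the terms $x^{c(\sigma)}\,\sigma\circ(i~k)$ for $1 \le i \le k-1$. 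Regarded as an element of $S_k$, the permutation $\sigma$ has $c(\sigma)+1$ cycles (the extra one being the fixed point $k$), so the first family contributes $\sum x^{\#\mathrm{cycles}(\sigma)}\sigma$ over permutations fixing $k$. For the second family, since $\sigma$ fixes $k$, the point $k$ lies in its own cycle while $i\neq k$ lies in another, so right-multiplication by $(i~k)$ merges these two cycles; hence $\sigma\circ(i~k)$ has $c(\sigma)+1-1 = c(\sigma) = \#\mathrm{cycles}(\sigma\circ(i~k))$ cycles in $S_k$, and this family contributes $\sum x^{\#\mathrm{cycles}(\pi)}\pi$ over the permutations $\pi$ with $\pi(k)\neq k$. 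The combinatorial crux is that $(\sigma, i) \mapsto \sigma\circ(i~k)$ is a bijection from pairs with $\sigma \in S_{k-1}$ and $1\le i\le k-1$ onto the permutations of $S_k$ not fixing $k$: given such a $\pi$ one recovers $i = \pi^{-1}(k)$ and then $\sigma = \pi\circ(i~k)$, which indeed fixes $k$. Combining the two families gives exactly $\sum_{\sigma\in S_k} x^{\#\mathrm{cycles}(\sigma)}\sigma$.

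For part (b), first note that since $J_1,\dots,J_k$ commute, $f(J_1,\dots,J_k)$ is a well-defined element of $\mathbb{C}[S_k]$, and because $f$ is symmetric it lies in the centre $Z\mathbb{C}[S_k]$, hence acts as a scalar on each irreducible $V^\lambda$; it remains to identify this scalar as $f(\mathrm{cont}(\lambda))$. I would do this via Young's seminormal (or orthogonal) form: $V^\lambda$ has a Gelfand--Tsetlin basis $\{v_T\}$ indexed by the standard Young tableaux $T$ of shape $\lambda$, on which each $J_i$ acts diagonally, $J_i v_T = \mathrm{ct}_T(i)\,v_T$, where $\mathrm{ct}_T(i)$ is the content of the box of $T$ containing $i$ (this is the Okounkov--Vershik description~\cite{oko-ver96}). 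Consequently $f(J_1,\dots,J_k)v_T = f(\mathrm{ct}_T(1),\dots,\mathrm{ct}_T(k))\,v_T$, and since the multiset $\{\mathrm{ct}_T(1),\dots,\mathrm{ct}_T(k)\}$ equals $\mathrm{cont}(\lambda)$ independently of $T$, we obtain $f(J_1,\dots,J_k)v_T = f(\mathrm{cont}(\lambda))\,v_T$ for every $T$. Thus $f(J_1,\dots,J_k)$ acts as the scalar $f(\mathrm{cont}(\lambda))$ on $V^\lambda$; passing to the class algebra, where $\chi^\lambda$ spans a line in the block indexed by $\lambda$ and is annihilated in every other block, this is precisely the asserted identity. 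Alternatively one can bypass the seminormal form and derive part (b) from part (a): by (a) the central element $\prod_i(x+J_i)$ acts on $V^\lambda$ by the scalar $\prod_{(i,j)\in\lambda}(x+j-i)$ (the classical content evaluation), so the elementary symmetric polynomials $e_1(J),\dots,e_k(J)$ act on $V^\lambda$ by $e_1(\mathrm{cont}(\lambda)),\dots,e_k(\mathrm{cont}(\lambda))$, and the claim for an arbitrary symmetric $f$ follows by expressing $f$ as a polynomial in $e_1,\dots,e_k$.

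The routine part is (a); the genuine content, and the step I expect to be the crux, is the input to (b) --- the eigenvalues of the Jucys--Murphy elements on the Gelfand--Tsetlin basis, equivalently the content evaluation of $\prod_i(x+J_i)$ on irreducibles. This is exactly where one invokes the structure theory of $\mathbb{C}[S_k]$, and it is the reason the proposition is presented as a result of Jucys rather than proved from first principles here.
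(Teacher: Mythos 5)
Your proposal is correct, but there is nothing in the paper to compare it against: the proposition is stated as a citation to Jucys~\cite{juc74} (and, implicitly, to the Okounkov--Vershik theory~\cite{oko-ver96}) and the paper offers no proof of its own. Your part (a) is the standard induction, and the bookkeeping is right: the factor $(x+J_k)$ multiplied on the right either keeps $k$ as a fixed point (adding one cycle) or merges the cycle of $i$ with the fixed point $k$ (so $\sigma\circ(i~k)$ has exactly $\#\mathrm{cycles}(\sigma)$ cycles, matching the unchanged power of $x$), and the map $(\sigma,i)\mapsto\sigma\circ(i~k)$ is indeed a bijection onto permutations not fixing $k$, with inverse $i=\pi^{-1}(k)$, $\sigma=\pi\circ(i~k)$. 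For part (b), the Gelfand--Tsetlin argument is the cleanest route: note that the diagonal action $J_i v_T=\mathrm{ct}_T(i)\,v_T$ with eigenvalue multiset $\mathrm{cont}(\lambda)$ independent of $T$ by itself shows both that $f(J_1,\ldots,J_k)$ acts as the scalar $f(\mathrm{cont}(\lambda))$ on $V^\lambda$ and (since this holds for every $\lambda$) that it is central, so you do not even need to invoke centrality of symmetric functions of Jucys--Murphy elements as a separate input; the passage to $\chi^\lambda$, which is proportional to the primitive central idempotent $e_\lambda$, is then immediate. Your alternative derivation of (b) from (a) is also fine, with the caveat that the ``classical content evaluation'' $\sum_\sigma x^{\#\mathrm{cycles}(\sigma)}\chi^\lambda(\sigma)/\chi^\lambda(\mathrm{id})=\prod_{\square\in\lambda}(x+c(\square))$ must then be taken as an independently known identity (it is often itself proved via Jucys--Murphy elements, so quoting it uncritically would be mildly circular); and the restriction to elementary symmetric polynomials $e_1,\ldots,e_k$ suffices exactly as you say, since any symmetric polynomial in $k$ variables is a polynomial in them.
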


It was shown by Novak~\cite{nov10} that the unitary Weingarten function can be naturally expressed in terms of the Jucys--Murphy elements via the formula
\[
\sum_{\sigma \in \SG_k} \UW(\sigma) \, \sigma = \prod_{i = 1}^k \dfrac{1}{N+J_i}.
\]
By considering the right side as a series in $-\frac{1}{N}$, one observes that the coefficients are homogeneous symmetric functions of the Jucys--Murphy elements. This leads directly to the notion of monotone Hurwitz numbers, which count monotone factorisations of a given permutation with a prescribed length. The analogue of the equation above for the Weingarten function $\SW$ is the following, which leads to a deformation of the monotone Hurwitz numbers, which are now weighted counts of monotone factorisations with weight equal to $t$ to the power of the hive number. This idea was already briefly introduced in \cref{def:weightedcount}, but will be studied in further detail in \cref{sec:monotone}.

\begin{proposition} \label{prop:weingartenjucys}
For each positive integer $k$, we have the following equality in $Z\mathbb{C}[\SG_k]$, the centre of the symmetric group algebra.
\[
\sum_{\sigma \in \SG_k} \SW(\sigma) \, \sigma = \prod_{i = 1}^k \dfrac{M+J_i}{N+J_i}
\]
\end{proposition}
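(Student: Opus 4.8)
The plan is to deduce the identity directly from the large $N$ expansion of \cref{thm:fixTexpansion} by resumming the weighted count of monotone factorisations into a product involving the Jucys--Murphy elements. By \cref{thm:fixTexpansion}, for every $\sigma \in \SG_k$ we have $(1-t)^k \, \SW(\sigma) = \sum_{r \geq 0} \dhn{r}{\sigma} \, (-1/N)^r$, so multiplying by $\sigma$ and summing over $\SG_k$ reduces the claim to identifying the generating series $\sum_{r \geq 0} z^r \sum_{\sigma \in \SG_k} \dhn{r}{\sigma} \, \sigma$ in $\mathbb{C}[\SG_k][[z]]$ and then specialising at $z = -1/N$.

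The key step is a block decomposition of monotone factorisations. In a monotone factorisation $\bm{\tau} = ((a_1\,b_1), \ldots, (a_r\,b_r))$, the monotonicity $b_1 \leq \cdots \leq b_r$ forces the transpositions sharing a fixed larger element $c$ to occur consecutively, and summing over the choices of smaller elements shows that such a block of length $e \geq 1$ contributes exactly $J_c^e = \big( \sum_{a < c} (a\,c) \big)^e$ to the product $\tau_1 \cdots \tau_r$. Grouping monotone factorisations by their set of distinct larger elements $\{c_1 < \cdots < c_m\} \subseteq \{1, \ldots, k\}$ (for which $m = b(\bm{\tau})$) and by the block lengths, and using that the Jucys--Murphy elements commute, we obtain
\[
\sum_{r \geq 0} z^r \sum_{\sigma \in \SG_k} \dhn{r}{\sigma}\, \sigma \;=\; \sum_{m \geq 0} t^m \sum_{1 \leq c_1 < \cdots < c_m \leq k} \prod_{l=1}^m \frac{z J_{c_l}}{1 - z J_{c_l}} \;=\; \prod_{c=1}^k \left( 1 + t \, \frac{z J_c}{1 - z J_c} \right) \;=\; \prod_{c=1}^k \frac{1 - (1-t)\, z J_c}{1 - z J_c},
\]
the middle equality being the standard identity $\sum_{m} t^m e_m(y_1, \ldots, y_k) = \prod_{c=1}^k (1 + t y_c)$ for the elementary symmetric polynomials $e_m$. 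Setting $z = -1/N$ gives $(1-t)^k \sum_\sigma \SW(\sigma)\, \sigma = \prod_{c=1}^k \frac{N + (1-t) J_c}{N + J_c}$; dividing by $(1-t)^k$ and absorbing one factor of $1-t$ into each numerator yields $\sum_\sigma \SW(\sigma)\, \sigma = \prod_{c=1}^k \frac{N/(1-t) + J_c}{N + J_c}$; and finally substituting $t = 1 - \frac{N}{M}$, so that $1-t = \frac{N}{M}$ and $N/(1-t) = M$, produces $\prod_{c=1}^k \frac{M + J_c}{N + J_c}$, as required.

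I expect the point requiring the most care to be the block decomposition itself: one should verify that sending a monotone factorisation to the data consisting of its distinct larger elements $c_1 < \cdots < c_m$ together with, for each $l$, a nonempty word over $\{1, \ldots, c_l - 1\}$ recording the smaller elements of the $l$th block in order, is a bijection, and that the exponent of $t$ is exactly the number of blocks $m$. The only other subtlety is bookkeeping: all the manipulations above should be read as identities of formal power series in $1/N$ with coefficients in the finite-dimensional algebra $\mathbb{C}[\SG_k]$, after which \cref{thm:fixTexpansion} licenses the passage to the stated equality of rational functions of $M$ and $N$ in $Z\mathbb{C}[\SG_k]$; since the Jucys--Murphy elements have eigenvalues that are integers of bounded size, each $N + J_c$ is invertible for all but finitely many $N$ and the geometric expansions $\frac{1}{1 - z J_c} = \sum_{e \geq 0} z^e J_c^e$ cause no difficulty. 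As an alternative, one could instead check that $\prod_{c=1}^k \frac{M+J_c}{N+J_c}$ satisfies the recursion in $k$ encoded by the orthogonality relations of \cref{thm:orthogonality} together with the trivial base case $k=0$, but the route above is more direct.
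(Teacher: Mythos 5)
Your argument is correct, but it takes a genuinely different route from the paper. The paper proves \cref{prop:weingartenjucys} by induction on $k$ directly from the orthogonality relations of \cref{thm:orthogonality}: multiplying the claimed recursion $\sum_{\sigma \in \SG_k} \SW(\sigma)\,\sigma = \frac{M+J_k}{N+J_k} \sum_{\sigma \in \SG_{k-1}} \SW(\sigma)\,\sigma^{\uparrow}$ through by $N+J_k$ and matching coefficients, so that the identity is an exact algebraic reformulation of \cref{thm:orthogonality} with no limiting process involved; this is essentially the ``alternative'' you mention in your last sentence. You instead start from the large $N$ expansion of \cref{thm:fixTexpansion} and resum it via the block decomposition of monotone factorisations, which is sound: the blocks with common larger element $c$ do occur consecutively, summing over the smaller elements of a block of length $e$ yields $J_c^e$, the exponent of $t$ is the number of blocks, and the elementary-symmetric-function identity (valid because the $J_c$ commute, cf.\ \cref{prop:jucys}) gives the closed product, after which $z=-\frac{1}{N}$ and $1-t=\frac{N}{M}$ produce the stated formula. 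What your route buys is a conceptual explanation of \emph{why} the Jucys--Murphy product appears, in the spirit of Jucys's identity and Novak's treatment of the unitary case, directly from the combinatorics that define the deformed enumeration. What it costs is the need to upgrade an asymptotic/formal statement to an exact identity of rational functions in $M$ and $N$: you address this (boundedness of the contents makes the geometric series converge for $N$ large, both sides are rational, and rational functions are determined by their expansion at infinity along each ray $M = qN$, hence in both variables), but this bookkeeping is precisely what the paper's induction avoids, and note also that \cref{thm:fixTexpansion} is itself obtained in the paper by an infinite iteration of the orthogonality relations, so the paper's direct proof is both shorter and logically leaner. No circularity arises, since \cref{thm:fixTexpansion} is established before and independently of \cref{prop:weingartenjucys}.
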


\begin{proof}
This is essentially an algebraic reformulation of the orthogonality relations of \cref{thm:orthogonality} and we will prove it by induction on $k$. The base case $k=1$ corresponds to the fact that $\SW((1)) = \frac{M}{N}$, which is an immediate consequence of the orthogonality relation for $\sigma = (1)$.

It remains to show that, for $k \geq 2$,
\[
\sum_{\sigma\in \SG_k} \SW(\sigma) \, \sigma =\frac{M+J_k}{N+J_k} \sum_{\sigma\in \SG_{k-1}} \SW(\sigma) \, \sigma^{\uparrow},
\]
where for any permutation $\sigma \in \SG_{k-1}$, we write $\sigma^\uparrow \in \SG_k$ to denote the permutation that agrees with $\sigma$ on the set $\{1, 2, \ldots, k-1\}$ and fixes $k$. Consider multiplying both sides of this equation by $N+J_k$ and use the definition of the Jucys--Murphy element $J_k$ to show that
\begin{align}
(N + J_k) \sum_{\sigma \in \SG_k} \SW(\sigma) \, \sigma &= \sum_{\sigma \in \SG_k} \bigg( N\SW(\sigma) + \sum_{i=1}^{k-1} \SW((i ~ k) \circ \sigma) \bigg) \sigma, \label{eq:proof1} \\
(M+J_k) \sum_{\sigma \in \SG_{k-1}} \SW(\sigma) \, \sigma^{\uparrow} &= \sum_{\sigma \in \SG_k}\bigg( \delta_{\sigma(k),k} \, M \SW(\sigma^\downarrow) + \sum_{i=1}^{k-1} \delta_{\sigma(i),k} \SW( [\sigma \circ (i ~ k)]^\downarrow \bigg) \sigma. \label{eq:proof2}
\end{align}
The second equality is more subtle than the first and relies on the observation that for $1 \leq i \leq k-1$,
\[
\sum_{\sigma\in \SG_{k-1}} \SW(\sigma) \, (i ~ k) \circ \sigma^{\uparrow} = \sum_{\sigma \in \SG_k : \sigma(k) = k} \SW(\sigma^\downarrow) \, (i ~ k) \circ \sigma = \sum_{\sigma \in \SG_k : \sigma(i) = k} \SW([ \sigma\circ (i ~ k)]^\downarrow) \, \sigma.
\]
Finally, the desired equality between \cref{eq:proof1,eq:proof2} follows directly from the orthogonality relations of \cref{thm:orthogonality}, which completes the proof.
\end{proof}

\cref{prop:weingartenjucys} implies the following character formula for the Weingarten function.

\begin{corollary}[Character formula] \label{cor:characterformula}
Consider a permutation $\sigma\in S_k$ and let $\mathrm{id} \in S_k$ be the identity. Then
\[
\SW(\sigma) = \frac{1}{k!} \sum_{\lambda \vdash k} \chi^\lambda(\mathrm{id}) \, \chi^\lambda(\sigma) \prod_{\square \in \lambda} \frac{M+c(\square)}{N+c(\square)},
\]
where $c(\square)$ denotes the content of the box $\square$ in a Young diagram of a partition. Here and throughout, we use the notation $\lambda \vdash k$ to denote that $\lambda$ is a partition of $k$.
\end{corollary}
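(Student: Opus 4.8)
The plan is to derive the character formula as a direct consequence of \cref{prop:weingartenjucys} by projecting onto irreducible components. First I would recall that the group algebra $\mathbb{C}[\SG_k]$ decomposes as a direct sum of matrix algebras indexed by partitions $\lambda \vdash k$, and that the central primitive idempotents are
\[
e_\lambda = \frac{\chi^\lambda(\mathrm{id})}{k!} \sum_{\sigma \in \SG_k} \chi^\lambda(\sigma^{-1}) \, \sigma,
\]
which satisfy $\sum_{\lambda \vdash k} e_\lambda = \mathrm{id}$ and $e_\lambda e_\mu = \delta_{\lambda\mu} e_\lambda$. The key input from \cref{prop:jucys}(b) is that any symmetric polynomial in the Jucys--Murphy elements acts on the component of $e_\lambda$ by the scalar obtained by evaluating that polynomial on the multiset $\mathrm{cont}(\lambda)$ of contents. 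Since $\prod_{i=1}^k \frac{M+J_i}{N+J_i}$ is a symmetric function of $J_1, \ldots, J_k$ (a ratio of the elementary-type symmetric expressions appearing in \cref{prop:jucys}(a)), it acts on the $e_\lambda$-component by the scalar $\prod_{\square \in \lambda} \frac{M+c(\square)}{N+c(\square)}$.

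The main steps are then: (1) write $\sum_{\sigma} \SW(\sigma)\,\sigma = \prod_{i=1}^k \frac{M+J_i}{N+J_i}$ from \cref{prop:weingartenjucys}; (2) multiply both sides by $e_\lambda$ and sum over $\lambda$, using $\sum_\lambda e_\lambda = \mathrm{id}$, to get $\sum_{\sigma} \SW(\sigma)\,\sigma = \sum_{\lambda \vdash k} \left( \prod_{\square \in \lambda} \frac{M+c(\square)}{N+c(\square)} \right) e_\lambda$; (3) substitute the explicit formula for $e_\lambda$ and extract the coefficient of a fixed $\sigma \in \SG_k$ on both sides. This yields
\[
\SW(\sigma) = \frac{1}{k!} \sum_{\lambda \vdash k} \chi^\lambda(\mathrm{id}) \, \chi^\lambda(\sigma^{-1}) \prod_{\square \in \lambda} \frac{M+c(\square)}{N+c(\square)},
\]
and since characters of the symmetric group are real and $\sigma$ is conjugate to $\sigma^{-1}$, we may replace $\chi^\lambda(\sigma^{-1})$ by $\chi^\lambda(\sigma)$ to obtain the stated formula.

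I expect the only genuine subtlety to be justifying that $\prod_{i=1}^k \frac{M+J_i}{N+J_i}$ is legitimately a symmetric \emph{polynomial} (rather than merely a rational function) in the $J_i$, so that \cref{prop:jucys}(b) applies. This is handled by noting that the $J_i$ are simultaneously diagonalisable with eigenvalues among the integer contents, so $N + J_i$ is invertible in the relevant matrix blocks (as $N$ exceeds all possible $|c(\square)| < k \le N$), and on each block the operator is simply multiplication by the scalar $\prod_{\square \in \lambda} \frac{M+c(\square)}{N+c(\square)}$; equivalently, one can expand $\frac{1}{N+J_i}$ as a terminating geometric-type series using the fact that $J_i$ satisfies a polynomial equation, making the whole product a polynomial in the $J_i$ that is manifestly symmetric. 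Everything else is bookkeeping with idempotents and orthogonality of characters.
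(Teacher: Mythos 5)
Your proposal is correct and is essentially the paper's own argument: the paper likewise writes the identity of $Z\mathbb{C}[\SG_k]$ as $\frac{1}{k!}\sum_{\lambda \vdash k} \chi^\lambda(\mathrm{id})\,\chi^\lambda$ (i.e.\ the sum of the central idempotents $e_\lambda$), lets $\prod_{i=1}^k \frac{M+J_i}{N+J_i}$ act by the content scalars via \cref{prop:jucys}(b), and extracts the coefficient of $\sigma$ using \cref{prop:weingartenjucys}. Your extra care about invertibility of $N+J_i$ (valid since $|c(\square)|\leq k-1 < N$ under the standing assumption $k\leq N$) and the replacement $\chi^\lambda(\sigma^{-1})=\chi^\lambda(\sigma)$ are sound refinements of details the paper leaves implicit.
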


\begin{proof}
By the orthogonality of characters, the identity in $Z\mathbb{C}[S_k]$ can be expressed as $e = \frac{1}{k!} \sum_{\lambda \vdash k} \chi^\lambda(\mathrm{id}) \, \chi^\lambda$. Use part (b) of \cref{prop:jucys} to write
\[
\prod_{i=1}^k \frac{M+J_i}{N+J_i} \cdot e = \frac{1}{k!} \sum_{\lambda\vdash k} \left( \prod_{\square \in \lambda} \dfrac{M + c(\square)}{N+c(\square)} \right) \chi^{\lambda}(\mathrm{id}) \, \chi^\lambda .
\]
By \cref{prop:weingartenjucys}, $\SW(\sigma)$ is the coefficient of $\sigma$ in the above expression, and the result follows.
\end{proof}

In summary, integrals on the space $\SM(M,N)$ of matrices admit a convolution formula involving the Weingarten function $\SW$. The Weingarten function is in turn determined by orthogonality relations, from which it follows that its values have a representation-theoretic interpretation in terms of Jucys--Murphy elements. An alternative perspective shows that the large $N$ expansion of the Weingarten function for fixed $t = 1 - \frac{N}{M}$ has coefficients that are weighted counts of monotone factorisations. In the next section, we investigate the combinatorics of these enumerations in further detail.

\section{Deformed monotone Hurwitz numbers} \label{sec:monotone}

\subsection{Deformed monotone Hurwitz numbers}

Monotone Hurwitz numbers enumerate monotone factorisations of permutations and arise as coefficients in the large $N$ expansion of the unitary Weingarten function~\cite{gou-gua-nov14}. Weingarten calculus on the space $\SM(M,N)$ naturally motivates a deformation of the monotone Hurwitz numbers, obtained by counting with a weight equal to $t$ to the power of the hive number. This construction produces a family of polynomials with remarkable properties.

Recall from \cref{def:weightedcount} that $\dhn{r}{\sigma}$ denotes the weighted count of monotone factorisations of $\sigma$, while $\hn{r}{\sigma}$ restricts the count to transitive monotone factorisations. To fit the usual nomenclature concerning Hurwitz-type problems, we introduce a new notation to reindex by ``topology'', consider the enumeration as a function on cycle type, and normalise by the product of cycle lengths.

\begin{definition} [Deformed monotone Hurwitz numbers] \label{def:deformed}
For $\mu_1, \ldots, \mu_n \geq 1$, let $\sigma$ be any permutation with $n$ cycles of lengths $\mu_1, \ldots, \mu_n$ and define
\begin{align*}
\dHN{g}{n}{\mu_1, \ldots, \mu_n} &= \frac{1}{\mu_1 \cdots \mu_n} \, \dhn{|\mu|+2g-2+n}{\sigma}, \\
 \HN{g}{n}{\mu_1, \ldots, \mu_n} &= \frac{1}{\mu_1 \cdots \mu_n} \, \hn{|\mu|+2g-2+n}{\sigma}.
\end{align*}
Here and throughout, we use the notation $|\mu|$ as a shorthand for the sum $\mu_1 + \cdots + \mu_n$. We refer to $\dHN{g}{n}{\mu_1, \ldots, \mu_n}$ as a {\em disconnected deformed monotone Hurwitz number} and $ \HN{g}{n}{\mu_1, \ldots, \mu_n}$ as a {\em connected deformed monotone Hurwitz number}.
\end{definition}

\begin{remark}
The pair $(g,n)$ here can be interpreted as encoding the topology of a surface, with $g$ denoting the genus and $n$ the number of punctures, or marked points. A monotone factorisation $(\tau_1, \tau_2, \ldots, \tau_r)$ of $\sigma$ can be interpreted as the monodromy of a branched cover $\mathcal{S} \to \mathbb{CP}^1$ of Riemann surfaces, with $\sigma$ representing the monodromy over $\infty \in \mathbb{CP}^1$. So $\dHN{g}{n}{\mu_1, \ldots, \mu_n}$ and 
$\HN{g}{n}{\mu_1, \ldots, \mu_n}$ enumerate certain branched covers of $\mathbb{CP}^1$ by a genus $g$ surface with $n$ preimages of $\infty \in \mathbb{CP}^1$, the latter restricting the enumeration to connected covers. Consistent with this interpretation, we have that $\HN{g}{n}{\mu_1, \ldots, \mu_n} = 0$ unless $g$ is a non-negative integer, while $\dHN{g}{n}{\mu_1, \ldots, \mu_n}$ can be non-zero when $g$ is a negative integer. Note that the genus of a disconnected surface may be negative, since we consider the Euler characteristic $2 - 2g$ to be additive over disjoint union, rather than the genus itself.
\end{remark}

The character formula of \cref{cor:characterformula} translates into one for deformed monotone Hurwitz numbers as follows.

\begin{proposition} [Character formula] \label{prop:reptheory}
Let $[\h^r] F(\h)$ denote the coefficient of $\h^r$ in the series for $F(\h)$ at $\h = 0$ and let $\chi^\lambda_\mu$ denote the value of the symmetric group character $\chi^\lambda$ on a permutation of cycle type $\mu$. Then
\[
\dHN{g}{n}{\mu_1, \ldots, \mu_n} = \frac{1}{\mu_1 \cdots \mu_n} \, \big[ \h^{|\mu|+2g-2+n} \big] \sum_{\lambda \vdash |\mu|} \frac{\chi^\lambda_{11 \cdots 1} \, \chi^\lambda_\mu}{|\mu|!} \prod_{\square \in \lambda} \frac{1 - \h \, c(\square) + t \h \, c(\square)}{1 - \h \, c(\square)}.
\]
\end{proposition}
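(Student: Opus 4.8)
The plan is to derive the character formula for $\dHN{g}{n}{\mu_1,\ldots,\mu_n}$ directly from the combinatorial generating function for the weighted counts $\dhn{r}{\sigma}$, using the identification established in \cref{prop:weingartenjucys} together with the eigenvalue property of Jucys--Murphy elements from \cref{prop:jucys}(b). The key observation is that \cref{thm:fixTexpansion} and \cref{prop:weingartenjucys} together express the generating function $\sum_r \dhn{r}{\sigma} (-\h)^r$ — after the substitution $-\frac{1}{N} \mapsto \h$ and $q = \frac{M}{N} \mapsto \frac{1}{1-t}$ — as the coefficient of $\sigma$ in $\prod_{i=1}^k \frac{M+J_i}{N+J_i}$, suitably renormalised.

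First I would make precise the passage from the Weingarten function to the purely combinatorial generating function. Combining \cref{prop:weingartenjucys} with the large $N$ expansion of \cref{thm:fixTexpansion}, one has, for $\sigma \in \SG_k$,
\[
\sum_{r=0}^{\infty} \dhn{r}{\sigma} \, \h^r = (1-t)^k \cdot \big[\text{coefficient of } \sigma \text{ in } \textstyle\prod_{i=1}^k \frac{M+J_i}{N+J_i}\big],
\]
where on the right one substitutes $\h = -\frac{1}{N}$ and writes $M = qN$ with $q = \frac{1}{1-t}$, so that $\frac{M+J_i}{N+J_i} = \frac{q + J_i/N}{1 + J_i/N} = \frac{q - \h J_i}{1 - \h J_i}$ and $(1-t)^k = q^{-k}$. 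A short manipulation gives $q^{-1}\cdot\frac{q - \h J_i}{1 - \h J_i} = \frac{1 - \h J_i + t\h J_i}{1 - \h J_i}$, using $1 - q^{-1} = t$. Hence $\sum_r \dhn{r}{\sigma}\h^r$ is the coefficient of $\sigma$ in $\prod_{i=1}^k \frac{1 - \h J_i + t\h J_i}{1 - \h J_i}$, an element of $Z\mathbb{C}[\SG_k]$ since it is a symmetric function of the $J_i$.

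Next I would extract the coefficient of $\sigma$ via character theory, exactly as in the proof of \cref{cor:characterformula}. Writing the identity element as $e = \frac{1}{k!}\sum_{\lambda \vdash k}\chi^\lambda(\mathrm{id})\,\chi^\lambda$ and applying \cref{prop:jucys}(b) with the symmetric function $f(x_1,\ldots,x_k) = \prod_i \frac{1 - \h x_i + t\h x_i}{1-\h x_i}$, the Jucys--Murphy element $J_i$ acts on the $\chi^\lambda$-isotypic part by the content $c(\square)$ of the $i$-th box, so
\[
\prod_{i=1}^k \frac{1 - \h J_i + t\h J_i}{1-\h J_i}\cdot e = \frac{1}{k!}\sum_{\lambda \vdash k}\Big(\prod_{\square\in\lambda}\frac{1 - \h c(\square) + t\h c(\square)}{1 - \h c(\square)}\Big)\chi^\lambda(\mathrm{id})\,\chi^\lambda.
\]
Taking the coefficient of $\sigma$ (of cycle type $\mu$, so $k = |\mu|$) and using $\chi^\lambda(\mathrm{id}) = \chi^\lambda_{11\cdots1}$ and $\chi^\lambda(\sigma) = \chi^\lambda_\mu$ yields $\dhn{r}{\sigma} = [\h^r]\,\frac{1}{|\mu|!}\sum_{\lambda\vdash|\mu|}\chi^\lambda_{11\cdots1}\chi^\lambda_\mu\prod_{\square\in\lambda}\frac{1 - \h c(\square) + t\h c(\square)}{1-\h c(\square)}$. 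Finally, specialising $r = |\mu| + 2g - 2 + n$ and dividing by $\mu_1\cdots\mu_n$ per \cref{def:deformed} gives the claimed formula. The one point needing care — and the mild obstacle — is bookkeeping the sign and parameter substitutions consistently ($\h \leftrightarrow -\frac{1}{N}$, $t = 1 - \frac{N}{M}$ versus $q = \frac{M}{N}$, and the $(1-t)^k$ prefactor), making sure the product over boxes emerges with the stated numerator $1 - \h c(\square) + t\h c(\square)$ rather than some sign-twisted variant; everything else is a direct transcription of \cref{cor:characterformula}.
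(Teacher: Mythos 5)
Your proposal is correct and is essentially the paper's argument: the paper simply equates the monotone-factorisation expansion of $\SW(\sigma)$ (\cref{thm:fixTexpansion}) with the character formula of \cref{cor:characterformula} under the substitutions $\h = -\tfrac{1}{N}$, $t = 1 - \tfrac{N}{M}$, multiplies by $(1-t)^k$, and extracts the coefficient of $\h^{|\mu|+2g-2+n}$. The only difference is that you re-derive \cref{cor:characterformula} inline from \cref{prop:weingartenjucys} and \cref{prop:jucys}(b) rather than citing it, and your parameter bookkeeping ($q^{-1} = 1-t$, giving the numerator $1 - \h\,c(\square) + t\h\,c(\square)$) matches the paper's computation.
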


\begin{proof}
Equate the expressions for $\SW(\sigma)$ in terms of monotone factorisations (\cref{thm:fixTexpansion}) and characters of the symmetric group (\cref{cor:characterformula}), using the notation $\h = -\frac{1}{N}$ and $t = 1 - \frac{N}{M}$.
\[
\frac{1}{(1-t)^k} \sum_{r=0}^\infty \dhn{r}{\sigma} \, \h^r = \frac{1}{k!} \sum_{\substack{\lambda\vdash k}} \chi^\lambda(\mathrm{id}) \, \chi^\lambda(\sigma) \prod_{\square \in \lambda} \frac{1 - \h \, c(\square) + t \h \, c(\square)}{(1-t) (1 - \h \, c(\square))}
\]
The desired result is obtained by multiplying through by $(1-t)^k$, extracting the coefficient of $\h^{|\mu|+2g-2+n}$ from both sides, and using \cref{def:deformed} for $\dHN{g}{n}{\mu_1, \ldots, \mu_n}$.
\end{proof}

The family of deformed monotone Hurwitz numbers is stored in the large $N$ expansion of the following matrix integral, which we interpret as the partition function for the enumeration. As usual, the partition function naturally stores the disconnected enumeration, while its logarithm stores the connected enumeration.

\begin{proposition} [Matrix integral] \label{prop:integral}
The large $N$ expansion of the formal matrix integral below is a partition function for the deformed monotone Hurwitz numbers. Here, we use the notation $\h = -\frac{1}{N}$ and $p_i$ represents the $i$th power sum symmetric function, evaluated on the eigenvalues of the $N \times N$ complex matrix $A$. (It is common to see a power of $\h^{2g-2+n}$ in the partition function --- the extra $\h^{|\mu|}$ here can be removed by rescaling.)
\begin{align*}
\int_{\SM(M,N)} \exp \frac{N \, \mathrm{tr}(A\sm)}{M} \, \dd \sm &= 1 + \sum_{g=-\infty}^{\infty} \sum_{n=1}^\infty \sum_{\mu_1, \ldots, \mu_n = 1}^\infty \dHN{g}{n}{\mu_1, \ldots, \mu_n} \, \frac{\h^{|\mu|+2g-2+n}}{n!} \, p_{\mu_1} \cdots p_{\mu_n} \\
&= \exp \Bigg[ \sum_{g=0}^{\infty} \sum_{n=1}^\infty \sum_{\mu_1, \ldots, \mu_n = 1}^\infty \HN{g}{n}{\mu_1, \ldots, \mu_n} \, \frac{\h^{|\mu|+2g-2+n}}{n!} \, p_{\mu_1} \cdots p_{\mu_n} \Bigg]
\end{align*}
\end{proposition}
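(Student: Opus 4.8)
The plan is to expand the matrix integral $\int_{\SM(M,N)} \exp\!\big(\tfrac{N}{M}\operatorname{tr}(A\sm)\big)\,\dd\sm$ as a power series in the entries of $A$, apply the convolution formula of \cref{thm:convolution} term by term, and then recognise the resulting sum as the generating function for deformed monotone Hurwitz numbers via the large $N$ expansion of the Weingarten function (\cref{thm:fixTexpansion}). First I would write $\operatorname{tr}(A\sm) = \sum_{i,j} A_{ji}\sm_{ij}$ and expand the exponential, so that
\[
\int_{\SM(M,N)} \exp\frac{N\operatorname{tr}(A\sm)}{M}\,\dd\sm = \sum_{k=0}^\infty \frac{1}{k!}\Big(\frac{N}{M}\Big)^k \sum_{i_1,j_1,\ldots,i_k,j_k} A_{j_1 i_1}\cdots A_{j_k i_k} \int_{\SM(M,N)} \sm_{i_1 j_1}\cdots \sm_{i_k j_k}\,\dd\sm.
\]
Applying \cref{thm:convolution} converts the inner integral into $\sum_{\sigma\in\SG_k}\prod_a \delta_{i_{\sigma(a)},j_a}\,\SW(\sigma)$; summing the Kronecker deltas against the $A$-entries collapses the free indices so that $\sum_{i_1,\ldots,i_k}\prod_a A_{j_a i_a}$ with $j_a = i_{\sigma(a)}$ becomes a product of traces of powers of $A$ indexed by the cycles of $\sigma$, namely $\prod_{c \text{ cycle of }\sigma} \operatorname{tr}(A^{\ell(c)}) = \prod_c p_{\ell(c)}$.

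Next I would reorganise the sum over $\sigma\in\SG_k$ by cycle type $\mu\vdash k$. The number of permutations of cycle type $\mu=(1^{m_1}2^{m_2}\cdots)$ is $k!/\prod_i i^{m_i}m_i!$, and each contributes $\SW(\sigma)\prod_i p_i^{m_i}$, with $\SW(\sigma)$ depending only on $\mu$. Substituting the large $N$ expansion of \cref{thm:fixTexpansion}, $\SW(\sigma) = (1-t)^{-k}\sum_r \dhn{r}{\sigma}\,\h^r$ with $\h=-\tfrac1N$, and using $(N/M)^k = (1-t)^{-k}$... wait — actually $t = 1-\tfrac{N}{M}$ gives $\tfrac{N}{M} = 1-t$, so $(N/M)^k (1-t)^{-k} = 1$ and the two prefactors cancel cleanly. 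Then $\dHN{g}{n}{\mu_1,\ldots,\mu_n} = \tfrac{1}{\mu_1\cdots\mu_n}\dhn{|\mu|+2g-2+n}{\sigma}$ from \cref{def:deformed} lets one absorb the $\prod_i i^{m_i}$ denominator into the normalisation, leaving the combinatorial prefactor $\tfrac{1}{\prod_i m_i!}$, which matches $\tfrac{1}{n!}$ once one further orders the parts (the $n!$ comes from summing over ordered tuples $(\mu_1,\ldots,\mu_n)$ rather than partitions). Reindexing $r = |\mu|+2g-2+n$ produces exactly the claimed first equality.

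The second equality — that the partition function is the exponential of the connected generating function — follows from the standard exponential formula relating disconnected and connected enumerations: a monotone factorisation of a permutation with cycles distributed among several orbits factors uniquely as a disjoint union of transitive monotone factorisations on each orbit, and hive numbers add over the disjoint pieces, so the weights $t^{b(\bm\tau)}$ are multiplicative. The Euler characteristic $2-2g-n$ being additive over disjoint union (as noted in the remark following \cref{def:deformed}) is what makes the $\h$-powers add correctly. I expect the main obstacle to be bookkeeping: carefully matching the symmetry factors ($\prod m_i!$ versus $n!$, the $1/k!$ from the exponential, the $\prod i^{m_i}$ from cycle-type counting versus the $\mu_1\cdots\mu_n$ normalisation in \cref{def:deformed}), and justifying that all manipulations are legitimate at the level of formal power series in $\h$ with polynomial coefficients in the $p_i$ — there are no convergence issues, but one must be precise that "large $N$ expansion" here means the formal series obtained by substituting the expansion of each $\SW(\sigma)$, so that the identity is an identity of formal power series rather than of analytic functions.
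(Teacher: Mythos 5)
Your argument is correct, and it is the standard derivation that the paper leaves implicit (the proposition is stated without proof): expand the exponential, apply the convolution formula of \cref{thm:convolution}, collect traces by cycle type, cancel $(N/M)^k$ against the $(1-t)^{-k}$ in \cref{thm:fixTexpansion}, match the symmetry factors, and invoke the exponential formula, which works because supports of orbits are disjoint so monotone sequences merge uniquely and hive numbers, lengths, and Euler characteristics are additive. The only caveat worth stating explicitly is that the identity is one of formal power series in $\h$ and the $p_i$ (so the $k \leq N$ assumption on the convolution formula and the interchange of the sums over $k$ and $r$ are harmless, since each monomial $p_{\mu_1}\cdots p_{\mu_n}\h^r$ receives contributions from the single value $k = |\mu|$), which you have noted.
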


\begin{remark}
Recall that the parameter $t$ is encoded in the parameters of the space $\SM(M,N)$ via the relation $t = 1 - \frac{N}{M}$. The usual monotone Hurwitz numbers are recovered from their deformed counterparts defined above by setting $t = 1$. Perhaps surprisingly, this particular value of $t$ does not correspond to valid parameters and $M$ and $N$ in the matrix integral interpretation. It would be interesting to have a more direct connection between integration over $\UM(N)$ and $\SM(M,N)$.
\end{remark}

\begin{remark}
It would also be natural to consider ``double'' deformed monotone Hurwitz numbers that take two partitions as arguments, rather than one. These enumerate monotone factorisations whose product with a permutation of cycle type given by the first partition produces a permutation of cycle type given by the second. Although this ``double'' enumeration deserves further attention, we refrain from developing this theory in the present work, since they do not give rise to polynomials with the same remarkable properties as the ``single'' enumeration.
\end{remark}

\subsection{Cut-and-join recursion}

The notion of cut-and-join analysis was originally developed for single Hurwitz numbers, before being adapted to work in the monotone case~\cite{gou-jac-vai00, gou-gua-nov13a}. Deformed monotone Hurwitz numbers are amenable to a similar analysis, resulting in the following recursion.

\begin{theorem}[Cut-and-join recursion] \label{thm:cutjoin}
Apart from the initial condition $\HN{0}{1}{1} = 1$, the deformed monotone Hurwitz numbers satisfy
\begin{align} \label{eq:cutandjoin}
\mu_1 \HN{g}{n}{\mu_1, \mu_S} &= \sum_{i=2}^n (\mu_1 + \mu_i) \HN{g}{n-1}{\mu_1 + \mu_i, \mu_{S \setminus\{i\}}} + (t - 1) (\mu_1-1) \HN{g}{n}{\mu_1-1, \mu_S} \nonumber \\
&+ \sum_{\alpha+\beta=\mu_1} \alpha \beta \Bigg[ \HN{g-1}{n+1}{\alpha, \beta, \mu_S} + \sum_{\substack{g_1+g_2 = g \\ I_1 \sqcup I_2= S}} \HN{g_1}{|I_1|+1}{\alpha, \mu_{I_1}} \, \HN{g_2}{|I_2|+1}{\beta, \mu_{I_2}} \Bigg],
\end{align}
where we use the notation $S = \{2, 3, \ldots, n\}$ and $\mu_I = \{\mu_{i_1}, \mu_{i_2}, \ldots, \mu_{i_k}\}$ for $I = \{i_1, i_2, \ldots, i_k\}$.
\end{theorem}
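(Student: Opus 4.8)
The plan is to prove \cref{eq:cutandjoin} by a direct cut-and-join analysis of transitive monotone factorisations, running in parallel with the classical argument for the undeformed monotone Hurwitz numbers~\cite{gou-gua-nov13a} and isolating the single new phenomenon produced by the hive-number weighting. First I would unwind \cref{def:deformed}: multiplying the claimed identity through by $\mu_2 \cdots \mu_n$ turns it into a recursion among the weighted counts $\hn{r}{\sigma}$ of transitive monotone factorisations for permutations of cycle types $(\mu_1,\mu_S)$, $(\mu_1+\mu_i,\mu_{S\setminus\{i\}})$, $(\mu_1-1,\mu_S)$ and $(\alpha,\beta,\mu_S)$, with $r = |\mu|+2g-2+n$. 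I would fix $\sigma$ on the symbols $\{1,2,\ldots,|\mu|\}$ so that its first cycle has length $\mu_1$ and contains the largest symbol $d := |\mu|$; then, given a transitive monotone factorisation $\bm\tau = (\tau_1,\ldots,\tau_r)$ of $\sigma$, I would delete the last transposition $\tau_r = (a_r~b_r)$ with $a_r < b_r$, noting that transitivity together with monotonicity forces $b_r$ to be the largest symbol, so $b_r = d$. This leaves a monotone factorisation $\bm\tau' = (\tau_1,\ldots,\tau_{r-1})$ of $\sigma' := \sigma \circ (a_r~d)$.

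I would classify $\bm\tau'$ by the orbits of $\langle \tau_1,\ldots,\tau_{r-1}\rangle$ and by whether $(a_r~d)$ joins or cuts cycles of $\sigma$. If $a_r$ and $d$ lie in distinct cycles of $\sigma$, then $\bm\tau'$ is necessarily transitive and $\sigma'$ has the first cycle merged with some cycle of length $\mu_i$; after the standard reindexing this yields the join term $\sum_{i=2}^n (\mu_1+\mu_i)\HN{g}{n-1}{\mu_1+\mu_i,\mu_{S\setminus\{i\}}}$. If $a_r$ and $d$ lie in the first cycle of $\sigma$ and $\bm\tau'$ is still transitive, then $\sigma'$ has the first cycle split in two, yielding the connected cut term $\sum_{\alpha+\beta=\mu_1}\alpha\beta\,\HN{g-1}{n+1}{\alpha,\beta,\mu_S}$; if instead $\langle\tau_1,\ldots,\tau_{r-1}\rangle$ has two orbits, necessarily separated by $a_r$ and $d$, one obtains the disconnected cut term $\sum_{\alpha+\beta=\mu_1}\alpha\beta\sum_{g_1+g_2=g,\,I_1\sqcup I_2=S}\HN{g_1}{|I_1|+1}{\alpha,\mu_{I_1}}\,\HN{g_2}{|I_2|+1}{\beta,\mu_{I_2}}$. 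The Euler-characteristic accounting (the genus drop in the connected cut, additivity of $2-2g$ in the disconnected cut), the $\alpha\beta$ and $(\mu_1+\mu_i)$ coefficients (which come from the $\prod\mu$ normalisations of $\hn{r-1}{\cdot}$ rather than from counting choices of $a_r$), and the bijective content here are exactly as in the undeformed case, so I would quote~\cite{gou-gua-nov13a} rather than reproduce them.

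The new ingredient is that the hive weight $t^{b(\bm\tau)}$ is not multiplicative under the deletion: $\tau_r = (a_r~d)$ contributes a fresh hive value — an extra factor of $t$ — precisely when $d$ does not already occur as a second entry in $\bm\tau'$, equivalently when $\bm\tau'$ does not involve $d$ at all. This forces $\sigma'$ to fix $d$, hence $a_r = \sigma^{-1}(d)$, and makes $\bm\tau'$ a transitive monotone factorisation on $\{1,\ldots,d-1\}$ of a permutation of type $(\mu_1-1,\mu_S)$ with the same genus $g$ and the same number $n$ of cycles. This is exactly the degenerate instance of the disconnected cut in which one of the two components is the single point $\HN{0}{1}{1}$; its mirror instance — where the single point appears as the other factor — instead carries weight $1$, since there $\bm\tau'$ still involves $d$. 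Written verbatim the disconnected cut term counts both degenerate instances with weight $1$, so one must add back the asymmetry: $(t-1)$ times their common value $\alpha\beta\big|_{\alpha=1,\beta=\mu_1-1}=\mu_1-1$ times $\HN{g}{n}{\mu_1-1,\mu_S}$, which is precisely the term $(t-1)(\mu_1-1)\HN{g}{n}{\mu_1-1,\mu_S}$. Collecting the four contributions yields \cref{eq:cutandjoin}; the small cases $\mu_1\in\{1,2\}$ — where $(t-1)(\mu_1-1)$ degenerates or the two endpoints of the first cycle coincide — should be checked separately, as should the base case $\HN{0}{1}{1}=1$. I expect this weight-and-boundary bookkeeping to be the main obstacle, everything else being a transcription of the undeformed argument. (An alternative, more algebraic route is available: \cref{prop:weingartenjucys} gives $\sum_{\sigma\in S_k}\SW(\sigma)\,\sigma=\prod_{i=1}^k\frac{M+J_i}{N+J_i}$, from which one extracts an operator recursion in $Z\mathbb{C}[S_k]$ for $\sum_{\sigma}\sum_r \dhn{r}{\sigma}\,\h^r\,\sigma$; passing to symmetric functions, under which left multiplication by sums of transpositions becomes the cut-and-join differential operator, turns this into a partial differential equation for the partition function of \cref{prop:integral}, and extracting coefficients and then taking a logarithm to pass to the connected numbers again produces \cref{eq:cutandjoin}, the deformation of the Jucys--Murphy factor $\frac{M+J_i}{N+J_i}$ supplying the $(t-1)$ correction.)
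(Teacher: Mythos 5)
Your proposal is correct and follows essentially the same route as the paper's proof: strip the final transposition (forced by monotonicity and transitivity to involve the largest symbol), split into join, connected-cut and disconnected-cut cases with the standard bookkeeping quoted from the undeformed argument of Goulden--Guay-Paquet--Novak, and observe that the hive weight fails to be multiplicative exactly in the degenerate disconnected cut where the largest symbol is isolated, which is precisely how the paper produces the $(t-1)(\mu_1-1)\,\HN{g}{n}{\mu_1-1,\mu_S}$ correction by reweighting one of the two $\HN{0}{1}{1}\,\HN{g}{n}{\mu_1-1,\mu_S}$ contributions. No substantive gap beyond the small-case checks you already flag, which the paper also treats implicitly.
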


\begin{proof} 
The proof is based on the case for the usual monotone Hurwitz numbers~\cite{gou-gua-nov13a}, with some additional care required to keep track of the deformation parameter $t$. Consider multiplying \cref{eq:cutandjoin} by $\mu_2 \cdots \mu_n$ and consider the right side to be composed of four terms in the obvious way.

Fix a permutation $\sigma \in \SG_k$ of cycle type $(\mu_1, \ldots, \mu_n)$ and consider the quantity $\hn{r+1}{\sigma}$ for some $r \geq 0$. Note that a transitive monotone factorisation $\bm{\tau} = (\tau_1, \tau_2, \ldots, \tau_{r+1})$ of $\sigma$ must have $\tau_{r+1} = (a~k)$ for some $1\leq a \leq k-1$. So a transitive monotone factorisation $\bm{\tau}$ of $\sigma$ with length $r+1$ is equivalent to a monotone factorisation $\bm{\tau}'$ of $\sigma\circ (a~k)$ with length $r$ for some $1 \leq a \leq k-1$, where $\langle \bm{\tau}', (a~k)\rangle \leq \SG_k$ is transitive. This can be expressed via the equation
\[
\hn{r+1}{\sigma} = \sum_{\substack{\bm{\tau} \in \mathcal{M}_{r+1}(\sigma) \\ \langle\bm{\tau}\rangle \text{ transitive}}} t^{b(\bm{\tau})} = \sum_{a=1}^{k-1}\sum_{\substack{\bm{\tau}' \in \mathcal{M}_r(\sigma \circ (a~ k)) \\ \langle\bm{\tau}',(a~k)\rangle \text{ transitive}}} t^{|\bm{b}(\bm{\tau}')\cup\{k\}|},
\]
where we introduce the boldface notation $\bm{b}(\bm{\tau})$ to denote the set $\{b_1, b_2, \ldots, b_r\}$, excluding multiplicities, for $\bm{\tau} = ((a_1~b_1), (a_2~b_2), \ldots, (a_r~b_r))$.

The sum on the right side can be broken down into further sums, depending on various cases. For a fixed value of $a$, either $a$ is in the same cycle of $\sigma$ as $k$, or $a$ is in a different cycle. The transposition $(a~k)$ is referred to as a cut or a join of $\sigma$ depending on these two cases, respectively. Moreover, if $(a~k)$ is a join of $\sigma$, then any monotone factorisation $\bm{\tau}'$ of $\sigma \circ (a~k)$, with $\langle \bm{\tau}', (a~k)\rangle$ transitive, must be transitive itself. Based on these considerations, each term in the equation above falls into one of three cases: for each $1\leq a\leq k-1$ and $\bm{\tau}'\in\mathcal{M}_r(\sigma \circ (a~k))$ with $\langle\bm{\tau}',(a~k)\rangle$ transitive, either
\begin{itemize}
\item $(a~k)$ is a \textit{join} of $\sigma$ and $\langle\bm{\tau}'\rangle \leq \SG_k$ is transitive;
\item $(a~k)$ is a \textit{cut} of $\sigma$ and $\langle\bm{\tau}'\rangle \leq \SG_k$ is transitive; or
\item $(a~k)$ is a \textit{cut} of $\sigma$ and $\langle\bm{\tau}' \rangle \leq \SG_k$ is {\em not} transitive.
\end{itemize}

The first two cases can be expressed as one term to give the equation
\begin{equation} \label{eq:cutandjointemp}
\hn{r+1}{\sigma} = \sum_{a=1}^{k-1} \hn{r}{\sigma \circ (a~k)} + 
 \sum_{\substack{a=1\\ (a~k) \text{ cuts } \sigma}}^{k-1}\sum_{\substack{\bm{\tau}' \in \mathcal{M}_r(\sigma \circ (a~ k)) \\ \langle\bm{\tau}',(a~k)\rangle \text{ transitive}\\\langle\bm{\tau}'\rangle \text{ not transitive} }} t^{|\bm{b}(\bm{\tau}')\cup\{k\}|}.
\end{equation}
In the usual notation for deformed monotone Hurwitz numbers, the first term here precisely gives rise to the first and third terms of \cref{eq:cutandjoin}, while the second term here {\em almost} gives rise to the fourth term of \cref{eq:cutandjoin}. The remainder of the proof explains the meaning of ``almost'' here and how the second term of \cref{eq:cutandjoin} accounts correctly for this anomaly.

If $\bm{\tau}'$ falls into the third case for some value of $1 \leq a \leq k-1$, then the transitive orbit of $\langle \bm{\tau}', (a~k) \rangle$ is split into two orbits of $\langle \bm{\tau}' \rangle$ --- one containing $a$ and one containing $k$. In fact, these orbits are given by a partition of the disjoint cycles of $\sigma \circ (a~k)$ into two classes, thus giving rise to the quadratic terms in \cref{eq:cutandjoin}. The only instance that $b(\bm{\tau}')$ and $b(\bm{\tau}', (a~k))$ differ is when $k$ is in an orbit of size $1$ under $\langle \bm{\tau}' \rangle$, which then requires an additional weight of $t$ to be contributed. The $t-1$ factor in the second term of \cref{eq:cutandjoin} precisely handles this adjustment, since we wish to remove one of the $\HN{0}{1}{1} \, \HN{g}{n}{\mu_1 -1, \vec{\mu}_S}$ contributions appearing in the fourth term and weight it by an additional factor of $t$.

Thus, in the correct final accounting of all terms, we see that the first term of \cref{eq:cutandjointemp} corresponds to the first and third terms of \cref{eq:cutandjoin}, while the second term of \cref{eq:cutandjointemp} corresponds to the second and fourth terms of \cref{eq:cutandjoin}.
\end{proof}

The cut-and-join recursion provides an effective algorithm for calculating deformed monotone Hurwitz numbers, which is more efficient than naive approaches. With this improved computational power comes the ability to make large-scale empirical observations on the structure of the deformed monotone Hurwitz numbers and a recursive means to prove them. The most evident of these observations are the symmetric and unimodal nature of the coefficients of $\HN{g}{n}{\mu_1, \ldots, \mu_n}$.

Adopting the terminology of Brenti~\cite{bre90} and Zeilberger~\cite{zei89}, we define the following.

\begin{definition}
A polynomial $P(t) = a_0 + a_1t + a_2t^2 + \cdots + a_nt^n$ with real coefficients is said to be:
\begin{itemize}
\item {\em symmetric} if the sequence $\ldots, 0, 0, a_0, a_1, a_2, \ldots, a_n, 0, 0, \ldots$ is palindromic;
\item {\em unimodal} if there exists an integer $k$ such that $a_0 \leq a_1 \leq \cdots \leq a_k \geq a_{k+1} \geq \cdots \geq a_n$; and
\item a {\em $\Lambda$-polynomial} if it is a non-zero polynomial with non-negative coefficients that is both palindromic and unimodal.
\end{itemize}
If $P$ is a $\Lambda$-polynomial, then there exists a unique integer $\mathrm{dar}(P)$ such that $P(t) = t^{\mathrm{dar}(P)} P(t^{-1})$. The quantity $\mathrm{dar}(P)$ is called the {\em darga} of $P$.
\end{definition}

\begin{proposition} \label{prop:symmetry}
For $g \geq 0$, $n \geq 1$ and $|\mu| \geq 2$, the deformed monotone Hurwitz number $\HN{g}{n}{\mu_1, \ldots, \mu_n}$ is a $\Lambda$-polynomial of degree $|\mu|-1$ and darga $|\mu|$.
\end{proposition}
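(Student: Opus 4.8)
The plan is to prove all three assertions — that $\HN{g}{n}{\mu_1,\ldots,\mu_n}$ is a $\Lambda$-polynomial, that its darga is $|\mu|$, and that its degree is $|\mu|-1$ — simultaneously, by strong induction on $r = |\mu|+2g-2+n$, the common length of the transitive monotone factorisations being enumerated; note $r\geq 2+0-2+1=1$ in the stated range, so the base case is $r=1$, i.e.\ $\HN{0}{1}{2}=\tfrac{t}{2}$, which is visibly a $\Lambda$-polynomial of degree $1$ and darga $2$. Two facts need no induction. By \cref{def:weightedcount,def:deformed}, $\HN{g}{n}{\mu_1,\ldots,\mu_n}$ is $\tfrac{1}{\mu_1\cdots\mu_n}$ times a sum of monomials $t^{b(\bm{\tau})}$ over transitive monotone factorisations $\bm{\tau}$ of a fixed permutation of cycle type $\mu$, so its coefficients are non-negative; and since every transposition $(a_i~b_i)$ occurring has $b_i\in\{2,\ldots,|\mu|\}$, the hive number satisfies $b(\bm{\tau})\leq|\mu|-1$, so $\deg\HN{g}{n}{\mu_1,\ldots,\mu_n}\leq|\mu|-1$. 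The polynomial is non-zero, since it specialises at $t=1$ to the usual monotone Hurwitz number, which is positive.

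The inductive step runs on the cut-and-join recursion of \cref{thm:cutjoin}, read with left side $\mu_1\HN{g}{n}{\mu_1,\mu_S}$. I would use the standard closure properties of $\Lambda$-polynomials (see~\cite{bre90,zei89}): a positive multiple of a $\Lambda$-polynomial of darga $d$ is one; a non-zero sum of $\Lambda$-polynomials of darga $d$ is one; and a product of $\Lambda$-polynomials of dargas $d_1$ and $d_2$ — in particular the product of $1+t$ (darga $1$) with a $\Lambda$-polynomial of darga $d$ — is a $\Lambda$-polynomial of darga $d_1+d_2$. The aim is to exhibit the right side as a non-negative combination of $\Lambda$-polynomials each of darga $|\mu|$. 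Each Hurwitz number appearing on the right has $r$-value strictly less than $r$ (the terms $\HN{g}{n-1}{\mu_1+\mu_i,\mu_{S\setminus\{i\}}}$, $\HN{g}{n}{\mu_1-1,\mu_S}$ and $\HN{g-1}{n+1}{\alpha,\beta,\mu_S}$ all have $r$-value $r-1$, while in a quadratic term the two factors have $r$-values summing to $r$ with each at least $1$), so the inductive hypothesis applies. The join terms $(\mu_1+\mu_i)\HN{g}{n-1}{\mu_1+\mu_i,\mu_{S\setminus\{i\}}}$ and the genus-dropping terms $\alpha\beta\,\HN{g-1}{n+1}{\alpha,\beta,\mu_S}$ (the latter vanishing when $g=0$) involve partitions again of total size $|\mu|$, hence are $\Lambda$-polynomials of darga $|\mu|$; and a quadratic term $\alpha\beta\,\HN{g_1}{|I_1|+1}{\alpha,\mu_{I_1}}\HN{g_2}{|I_2|+1}{\beta,\mu_{I_2}}$ in which both first arguments yield partitions of size $\geq 2$ is a product of $\Lambda$-polynomials of dargas summing to $|\mu|$, hence of darga $|\mu|$.

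The terms not manifestly of this form are the anomalous term $(t-1)(\mu_1-1)\HN{g}{n}{\mu_1-1,\mu_S}$ together with the degenerate quadratic terms in which one factor is $\HN{0}{1}{1}=1$; the latter arise precisely from $\alpha=1,\,I_1=\emptyset,\,g_1=0$ (or symmetrically from the second factor) when $\mu_1\geq 2$, and each such term equals $(\mu_1-1)\HN{g}{n}{\mu_1-1,\mu_S}$. Collecting the (generically two) degenerate copies with the anomalous term produces $(t+1)(\mu_1-1)\HN{g}{n}{\mu_1-1,\mu_S}$; since $\HN{g}{n}{\mu_1-1,\mu_S}$ is by induction a $\Lambda$-polynomial of darga $|\mu|-1$ — its total size $|\mu|-1$ being $\geq 2$ whenever $\mu_1\geq 3$ or $n\geq 2$ — this regrouped term is a $\Lambda$-polynomial of darga $|\mu|$. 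Making this regrouping airtight is the main obstacle, and it is exactly what the mysterious $t-1$ factor in the cut-and-join recursion is engineered to allow: one must treat the boundary case $\mu_1=2,\,n=1$ separately (there the two degenerate copies coalesce into the single honest term $\HN{0}{1}{1}^2=\delta_{g,0}$, and a one-line computation from the recursion gives $\HN{g}{1}{2}=\tfrac{t}{2}$), and one must check throughout that every Hurwitz number on the right genuinely lies in the range $g\geq 0$, $n\geq 1$, $|\mu|\geq 2$ of the inductive hypothesis. Granting this, the right side is a non-negative combination of $\Lambda$-polynomials of darga $|\mu|$, so dividing by $\mu_1$ shows $\HN{g}{n}{\mu_1,\ldots,\mu_n}$ is a $\Lambda$-polynomial of darga $|\mu|$; with $\deg\leq|\mu|-1$ this already forces the lowest-degree term to be at least $t^1$.

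It remains to pin the degree to exactly $|\mu|-1$, which I would do in tandem with the induction above by showing the coefficient of $t^{|\mu|-1}$ on the right side is strictly positive. The ``genuine'' quadratic terms and the degenerate contributions come from products of Hurwitz numbers of total size $\leq|\mu|-1$, so contribute only to degrees $\leq|\mu|-2$; but by the inductive hypothesis the join terms, the genus-dropping terms, and the factor $\HN{g}{n}{\mu_1-1,\mu_S}$ inside the regrouped $(t+1)(\mu_1-1)(\cdots)$ term each have degree one less than their own total size, so each contributes a positive coefficient to $t^{|\mu|-1}$. At least one such term is always present: if $n\geq 2$ there is a join term, while if $n=1$ then $\mu_1=|\mu|\geq 2$ and either $\mu_1\geq 3$, so the regrouped $(t+1)(\mu_1-1)(\cdots)$ term is present and contributes, or $\mu_1=2$, which is the boundary case where $\HN{g}{1}{2}=\tfrac{t}{2}$ already has degree $1=|\mu|-1$. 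Hence $\deg\HN{g}{n}{\mu_1,\ldots,\mu_n}=|\mu|-1$, completing the induction.
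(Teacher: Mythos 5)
Your induction is the same as the paper's: strong induction on $r=|\mu|+2g-2+n$ with base case $\HN{0}{1}{2}=\tfrac{t}{2}$, the same regrouping of the two degenerate $\HN{0}{1}{1}$ products from the quadratic sum into the $(t-1)$ term to produce $(t+1)(\mu_1-1)\HN{g}{n}{\mu_1-1,\mu_S}$, and the same closure properties of $\Lambda$-polynomials under products and same-darga sums; so the darga-$|\mu|$ part of your argument is essentially the paper's proof, with the boundary case $\mu_1=2,\,n=1$ (which the paper absorbs into its ``possibly zero'' caveat, using $\HN{g}{1}{1}=0$ for $g\geq 1$) spelled out explicitly. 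Where you genuinely diverge is the degree statement: the paper exhibits, for any $\sigma\in S_k$ with $k\geq 2$, a transitive monotone factorisation with hive number $1$, so the linear coefficient is positive and the constant term vanishes, and then darga $|\mu|$ alone forces degree $|\mu|-1$; you instead bound the degree above by $|\mu|-1$ combinatorially (all $b_i\geq 2$, so $b(\bm{\tau})\leq|\mu|-1$) and prove positivity of the coefficient of $t^{|\mu|-1}$ inductively through the recursion (join term if $n\geq 2$, regrouped term if $n=1,\,\mu_1\geq 3$, direct check if $n=1,\,\mu_1=2$). Both routes are valid; the paper's is shorter and purely combinatorial, yours stays entirely inside the recursion at the cost of the extra case analysis. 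Two small slips that do not affect correctness: the $r$-values of the two factors in a quadratic term sum to $r-1$, not $r$ (still $<r$, so the inductive hypothesis applies); and your ``one-line computation'' $\HN{g}{1}{2}=\tfrac{t}{2}$ really gives $\HN{g}{1}{2}=\tfrac{1}{2}\HN{g-1}{2}{1,1}$, which suffices since the inductive hypothesis already makes $\HN{g-1}{2}{1,1}$ a $\Lambda$-polynomial of darga $2$ and degree $1$.
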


\begin{proof} 
We use strong induction on the value of $r = |\mu| + 2g - 2 + n \geq 1$. The only deformed monotone Hurwitz number corresponding to $r = 1$ is $\HN{0}{1}{2} = \frac{1}{2} t$, which is indeed a $\Lambda$-polynomial of degree $1$ and darga $2$. Now consider a deformed monotone Hurwitz number $\HN{g}{n}{\mu_1, \ldots, \mu_n}$ corresponding to $r \geq 2$ and rewrite the cut-and-join recursion of \cref{thm:cutjoin} as
\begin{align*}
\mu_1 \HN{g}{n}{\mu_1, \mu_S} &= \sum_{i=2}^n (\mu_1 + \mu_i) \HN{g}{n-1}{\mu_1 + \mu_i, \mu_{S \setminus\{i\}}} + (t + 1) (\mu_1-1) \HN{g}{n}{\mu_1-1, \mu_S} \nonumber \\
&+ \sum_{\alpha+\beta=\mu_1} \alpha \beta \Bigg[ \HN{g-1}{n+1}{\alpha, \beta, \mu_S} + \sum_{\substack{g_1+g_2 = g \\ I_1 \sqcup I_2= S}}^{\text{no } \HN{0}{1}{1}} \HN{g_1}{|I_1|+1}{\alpha, \mu_{I_1}} \, \HN{g_2}{|I_2|+1}{\beta, \mu_{I_2}} \Bigg].
\end{align*}
Here, we have simply removed the two terms including $\HN{0}{1}{1}$ from the final summation and incorporated them into the second term on the right side.

By the inductive hypothesis, all deformed monotone Hurwitz numbers appearing on the right side of the cut-and-join recursion are $\Lambda$-polynomials. To prove that $\HN{g}{n}{\mu_1, \ldots, \mu_n}$ is also one, we rely on the following closure properties of $\Lambda$-polynomials, proofs of which can be found in~\cite{sun-wan-zha15}.
\begin{itemize}
\item If $P$ and $Q$ are $\Lambda$-polynomials, then the product $PQ$ is a $\Lambda$-polynomial with $\mathrm{dar}(PQ) = \mathrm{dar}(P) + \mathrm{dar}(Q)$.
\item If $P$ and $Q$ are $\Lambda$-polynomials of the same darga $d$, then $P + Q$ is a $\Lambda$-polynomial of darga $d$.
\end{itemize}
It follows that each of the four terms on the right side of the equation above are $\Lambda$-polynomials of darga~$|\mu|$, or possibly equal to the zero polynomial. So the entire right side, and hence $\HN{g}{n}{\mu_1, \ldots, \mu_n}$, is a $\Lambda$-polynomial of darga $|\mu|$.

For any permutation $\sigma \in S_k$ for $k \geq 2$, one can construct a transitive monotone factorisation of $\sigma$ with hive number $1$. So the polynomial $\HN{g}{n}{\mu_1, \ldots, \mu_n}$ has a non-zero linear term, but no constant term. Combined with the fact that it is a $\Lambda$-polynomial of darga $|\mu|$, it follows that $\HN{g}{n}{\mu_1, \ldots, \mu_n}$ is a polynomial of degree $|\mu|-1$.
\end{proof}

\begin{remark}
We have thus far provided two interpretations for the polynomials that we refer to as deformed monotone Hurwitz numbers --- one as weighted counts of monotone factorisations (\cref{def:deformed}) and the other as coefficients in the large $N$ expansion of a matrix integral (\cref{prop:integral}). The symmetry of these polynomials does not appear to be immediately evident from either of these manifestations. In the matrix integral interpretation, the transformation $t \mapsto \frac{1}{t}$ encodes the transformation $M \mapsto N-M$. This is natural from the perspective of integration on the Grassmannian, since $\mathrm{Gr}(M,N) \cong \mathrm{Gr}(N-M,N)$, although further investigation of this symmetry is warranted. In the monotone factorisation interpretation, the symmetry of the polynomials implies that the number of transitive monotone factorisations of a permutation $\sigma \in S_k$ with hive number $h$ is equal to the number of transitive monotone factorisations of $\sigma$ with hive number $k-h$. This does not appear to be immediately obvious, so it would be interesting to have a direct combinatorial proof of this fact. It is worth remarking that the disconnected enumeration $\dHN{g}{n}{\mu_1, \ldots, \mu_n}$ does not lead to symmetric polynomials, so the transitivity condition is important here.
\end{remark}

\begin{remark}
There are various other properties of polynomial coefficients that are often mentioned alongside symmetry and unimodality. The deformed monotone Hurwitz numbers constitute a family of polynomials that appears to satisfy many of these. As examples, we pose the following questions. Do the coefficients of the deformed monotone Hurwitz numbers exhibit log-concavity? Do they exhibit asymptotic normality?
\end{remark}

As we will see in \cref{sec:TR}, deformed monotone Hurwitz numbers belong to a large class of enumerative problems governed by the topological recursion. In many such instances, the one-point invariants --- in other words, those with $n = 1$ --- are governed by a recursion that is linear, rather than quadratic like the cut-and-join recursion. In previous work of Chaudhuri and the second author~\cite{cha-do21}, it was shown that such one-point recursions exist for ``weighted Hurwitz numbers'', to use the terminology of Alexandrov, Chapuy, Eynard and Harnad~\cite{ACEH20}. Furthermore, they gave an explicit algorithmic approach to obtain these recursions that is effective in many examples. In the case of deformed monotone Hurwitz numbers, one obtains the following result.

\begin{theorem}[One-point recursion] \label{thm:onepoint}
The one-point deformed monotone Hurwitz numbers --- in other words, those with $n = 1$ --- satisfy
\[
d^2 \, \HN{g}{1}{d} = (d-1) (2d-3) (t+1) \, \HN{g}{1}{d-1} - (d-2) (d-3) (t-1)^2 \, \HN{g}{1}{d-2} + d^2(d-1)^2 \, \HN{g-1}{1}{d}.
\]
\end{theorem}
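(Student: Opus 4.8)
The plan is to follow the algorithmic strategy of Chaudhuri and the second author~\cite{cha-do21} for producing one-point recursions from a character/contents formula, adapted to carry the deformation parameter $t$. The starting point is the $n=1$ case of the character formula in \cref{prop:reptheory}. For a single part $\mu_1 = d$, only hook-shaped partitions $\lambda \vdash d$ contribute, since $\chi^\lambda_{(d)} = 0$ unless $\lambda$ is a hook. Writing the hook with arm $a$ and leg $d-1-a$, its multiset of contents is $\{-(d-1-a), \ldots, -1, 0, 1, \ldots, a\}$, and $\chi^\lambda_{(d)} = (-1)^{d-1-a}$, $\chi^\lambda_{(1^d)} = \binom{d-1}{a}$. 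Substituting these into \cref{prop:reptheory} gives
\[
d \, \dHN{g}{1}{d} = \big[\h^{2g-1+d}\big] \frac{1}{d} \sum_{a=0}^{d-1} (-1)^{d-1-a} \binom{d-1}{a} \prod_{c=-(d-1-a)}^{a} \frac{1 - \h c + t \h c}{1 - \h c}.
\]
Since $c=0$ contributes a factor of $1$, the product telescopes into a ratio of two ``shifted factorial'' type expressions in the variable $\h$, one running over positive contents $1,\ldots,a$ and one over $1,\ldots,d-1-a$. The first step is therefore to rewrite this as a single contour-integral or generating-function expression; experience with monotone Hurwitz numbers (where $t=1$ collapses the numerator to $1$) suggests packaging $\sum_a$ as the coefficient extraction from a product, giving a one-variable integral representation for the generating function $\sum_d \dHN{g}{1}{d} z^d$ or, better, for a suitably normalised version.

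The second step is to extract from this integral representation a linear ODE or difference equation. Here I would exploit that the integrand is built from $\frac{1-\h c + t\h c}{1-\h c} = 1 + \frac{(t-1)\h c}{1-\h c}$, so that ratios of consecutive terms (in $d$, or in the arm parameter $a$) are rational functions of bounded degree in $\h$ and in the content variable; this is exactly the structure that forces a finite-order recursion. Concretely, one forms the ratio of the summand for $d$ to that for $d-1$ (and $d-2$), clears denominators, and uses the creative-telescoping / WZ-style manipulation from~\cite{cha-do21} to produce an identity among the three generating series that, after extracting $[\h^{2g-1+d}]$, becomes the claimed recursion. The coefficients $(d-1)(2d-3)(t+1)$ and $-(d-2)(d-3)(t-1)^2$ and $d^2(d-1)^2$ should emerge from this clearing-of-denominators step; the factor $(t-1)^2$ in particular is the signal that the $t$-dependence enters only through $(t-1)\h$, so that the ``$t=1$'' specialisation kills the $\HN{g}{1}{d-2}$ term — a useful consistency check against the known monotone one-point recursion.

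The main obstacle I expect is the bookkeeping in the telescoping step: one must find the exact rational certificate relating the summands, and verify that after summing over the arm parameter $a$ the boundary terms vanish and the coefficients are precisely the stated polynomials in $d$ and $t$, with no lower-order corrections. A cleaner alternative, which I would pursue in parallel, is to bypass representation theory entirely and derive the recursion from the topological recursion / spectral curve $xy^2 + (t-1)xy - y + 1 = 0$ of \cref{thm:TR}: the $\omega_{g,1}$ are governed by a quantum curve — a second-order ODE in $x$ annihilating the wave function $\psi = \exp\sum \int \omega_{g,1}$ — and reading off the recursion for the coefficients of that ODE directly yields a linear three-term recursion of exactly this shape. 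Either route reduces the theorem to a finite, mechanical verification; the representation-theoretic route is more self-contained given what precedes in the paper, so I would present that as the main proof and relegate the quantum-curve derivation to a remark.
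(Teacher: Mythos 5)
Your plan follows essentially the same route as the paper, which states \cref{thm:onepoint} without a written-out proof and relies precisely on the algorithmic approach of Chaudhuri and Do~\cite{cha-do21} for weighted Hurwitz numbers --- that is, on extracting a linear three-term recursion from the hook-content form of the $n=1$ character formula (\cref{prop:reptheory}), exactly as you propose; your quantum-curve alternative is a reasonable supplement but is not the paper's route. One small point to make explicit in your write-up: \cref{prop:reptheory} computes the disconnected numbers, so you should note that for $n=1$ the connected and disconnected enumerations coincide, since any factorisation of a $d$-cycle automatically generates a transitive subgroup.
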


Observe that setting $t = 1$ above recovers the known one-point recursion for monotone Hurwitz numbers~\cite{cha-do21}. Furthermore, setting $g = 0$ and combining with \cref{eq:hurwitznarayana} recovers the known linear recursion for Narayana polynomials~\cite{fis06}.

\subsection{Interlacing phenomena} \label{subsec:interlacing}

\subsubsection*{Root conjectures}

The cut-and-join recursion of \cref{thm:cutjoin} can be used to effectively compute a large number of deformed monotone Hurwitz numbers, some of which are shown in \cref{app:data}. We previously stated in \cref{prop:symmetry} that the coefficients of these polynomials are symmetric and unimodal. We now turn our attention to their roots, which exhibit rather striking behaviour that remains largely conjectural at present. The most apparent property concerning roots of deformed monotone Hurwitz numbers is the following.

\begin{conjecture}[Real-rootedness] \label{con:realrooted}
For all $g \geq 0$, $n \geq 1$ and $\mu_1, \ldots, \mu_n \geq 1$, the deformed monotone Hurwitz number $\HN{g}{n}{\mu_1, \mu_2, \ldots, \mu_n}$ is a real-rooted polynomial in $t$.
\end{conjecture}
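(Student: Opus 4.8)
As \cref{con:realrooted} is open, what follows is the strategy I would attempt rather than a finished argument. The plan is to prove real-rootedness \emph{jointly} with the interlacing \cref{con:interlacing}, as one inductive package, with the induction on the weight $r = |\mu| + 2g - 2 + n$; by \cref{prop:symmetry} this quantity controls the degree $|\mu|-1$ of $\HN{g}{n}{\mu_1,\ldots,\mu_n}$, and every term on the right of both the cut-and-join recursion (\cref{thm:cutjoin}) and the one-point recursion (\cref{thm:onepoint}) has strictly smaller weight, so the induction is well founded. Coupling the two conjectures is the point: bare real-rootedness is not stable under the operations appearing in the recursions, whereas a sufficiently rich collection of interlacing relations is, and such relations are precisely what force a nonnegative linear combination of real-rooted polynomials to be real-rooted again.

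\textbf{The one-point tower.} For $(g,n)=(0,1)$, \eqref{eq:hurwitznarayana} identifies $(\mu+1)\HN{0}{1}{\mu+1}$ with the Narayana polynomial $\mathrm{Nar}_\mu(t)$, which is classically real-rooted with consecutive members interlacing, so this base case is known. For general $g$ with $n=1$ I would use the one-point recursion of \cref{thm:onepoint} as a double induction, outer on $g$ and inner on $d$. Each $\HN{g}{1}{d}$ is a $\Lambda$-polynomial of darga $d$ and degree $d-1$ (so divisible by $t$), and after removing that factor the standard folding that parametrises a palindromic polynomial by the sum and product of its reciprocal pairs of roots converts ``all roots real and negative'' into ordinary real-rootedness of an auxiliary polynomial on a half-line. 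Under this reduction the $g=0$ instance of the recursion becomes a three-term recurrence of the type handled by the classical real-rootedness criteria for such recurrences (for instance the unified criterion of Liu and Wang), reproving the Narayana case; for $g\ge 1$ the extra term $d^2(d-1)^2\HN{g-1}{1}{d}$ is, by the outer induction, real-rooted, and one must arrange that it interlaces the other two summands so that the whole right side is a nonnegative combination of polynomials sharing a common interleaver — hence real-rooted by the compatible-polynomials / common-interleaver principle (Fisk; Chudnovsky--Seymour) — and then check that the interlacing propagates to $d$.

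\textbf{The multi-point case.} For $n\ge 2$ I would run the cut-and-join recursion in the symmetrised form used in the proof of \cref{prop:symmetry} (with $(t+1)$ in place of the anomalous $(t-1)$ and the $\HN{0}{1}{1}$ terms absorbed). Products of real-rooted polynomials are automatically real-rooted, so the genus-splitting terms are individually fine; the work is to show that \emph{all} the terms on the right — the $(\mu_1+\mu_i)\HN{g}{n-1}{\mu_1+\mu_i,\mu_{S\setminus\{i\}}}$ terms, the $(t+1)(\mu_1-1)\HN{g}{n}{\mu_1-1,\mu_S}$ term, the $\HN{g-1}{n+1}{\alpha,\beta,\mu_S}$ terms, and the products — admit a common interleaver (a natural candidate being $\HN{g}{n}{\mu_1-1,\mu_S}$), so that their nonnegative combination is real-rooted and the output interlacings are exactly those asserted by \cref{con:interlacing}.

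\textbf{Main obstacle.} The crux, I expect, is that running the common-interleaver argument through the cut-and-join recursion requires, as inductive hypotheses, interlacing relations among deformed monotone Hurwitz numbers with \emph{several} different index patterns, some reaching beyond \cref{con:interlacing} itself; identifying the precise closed family of interlacing statements that is simultaneously \emph{needed} and \emph{provable}, and then verifying the common-interleaver hypotheses term by term, is the hard part — and is presumably why the statement is still conjectural. Secondary nuisances are the parity bookkeeping in the folding (consecutive one-point numbers have dargas of opposite parity, so the folded variable is not uniform along the induction) and the sign of the coefficient $(t+1)$, which changes at $t=-1$ inside the root locus. I would not expect the topological recursion of \cref{thm:TR} to give real-rootedness directly, although the spectral curve $xy^2 + (t-1)xy - y + 1 = 0$ may hint at the correct folding variable.
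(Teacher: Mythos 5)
The statement you were asked about is a conjecture: the paper offers no proof of it, only overwhelming numerical evidence plus a proof of the $(g,n)=(0,1)$ and $(1,1)$ cases (\cref{prop:genus01}, via the interlacing \cref{lem:interlacing}). Your proposal is likewise a strategy rather than a proof, and in the one-point base cases it essentially coincides with what the paper actually does: fold out the factor of $t$, treat the $g=0$ instance of \cref{thm:onepoint} as a three-term recurrence, and propagate real-rootedness and strict interlacing by the sign-alternation argument. So for $(0,1)$ your plan matches the paper; beyond that, the differences are where the gaps lie.

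Two concrete points. First, for $n=1$ and $g\geq 1$ you propose to keep the genus-lowering term $d^2(d-1)^2\,\HN{g-1}{1}{d}$ and absorb it by a common-interleaver argument. Note that the recursion has the term $-(d-2)(d-3)(t-1)^2\,\HN{g}{1}{d-2}$ with a \emph{negative} coefficient, so the right-hand side is not a nonnegative combination of real-rooted polynomials; the paper's \cref{lem:interlacing} instead runs an intermediate-value sign table at the roots of $\HN{g}{1}{d-1}$, and adding your third term to that table requires knowing the sign of $\HN{g-1}{1}{d}$ at those roots, i.e.\ a cross-genus interlacing statement that is not part of \cref{con:interlacing} and is nowhere established. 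The paper avoids this for $g=1$ by a different device: it proves a separate pure three-term recursion \cref{eq:genus1point} (no genus-lowering term), verified through the explicit $\omega_{1,1}$ supplied by \cref{thm:TR} and a differential-equation check, and then applies the lemma. Moreover, the remark following \cref{prop:genus01} states that no recursion of that shape exists for $g=2$, so the simple three-term route provably stops there; any successful version of your plan must genuinely handle the genus-lowering term, which is exactly the unproved cross-genus interlacing. Second, for $n\geq 2$ your common-interleaver scheme for the symmetrised cut-and-join recursion requires all terms — including every product $\HN{g_1}{|I_1|+1}{\alpha,\mu_{I_1}}\,\HN{g_2}{|I_2|+1}{\beta,\mu_{I_2}}$ over all splittings $\alpha+\beta=\mu_1$, $g_1+g_2=g$, $I_1\sqcup I_2=S$ — to share a common interleaver; this is far stronger than \cref{con:interlacing} and is not supplied by the induction you set up. You identify this yourself as the crux, correctly, but it means the proposal is a plausible programme rather than a proof, consistent with the statement remaining open in the paper.
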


The roots of the deformed monotone Hurwitz numbers are not only real, but also possess interesting structure relative to each other. We say that a polynomial $P$ {\em interlaces} a polynomial $Q$ if
\begin{itemize}
\item the degree of $P$ is $n$ and the degree of $Q$ is $n+1$ for some positive integer $n$;
\item $P$ has $n$ real roots $a_1 \leq a_2 \leq \cdots \leq a_n$ and $Q$ has $n+1$ real roots $b_1 \leq b_2 \leq \cdots \leq b_{n+1}$, allowing for multiplicity; and
\item $b_1 \leq a_1 \leq b_2 \leq a_2 \leq \cdots \leq b_n \leq a_n \leq b_{n+1}$.
\end{itemize}
If the inequalities above are strict, then we say that the polynomial $P$ {\em strictly interlaces} the polynomial $Q$. By convention, we will also say that a polynomial $P$ (strictly) interlaces a polynomial $Q$ if $P$ is constant and $Q$ is affine.

It is known that the sequence of Narayana polynomials interlace in the sense that $\mathrm{Nar}_\mu(t)$ interlaces $\mathrm{Nar}_{\mu+1}(t)$ for every positive integer $\mu$~\cite{fis06}. Given \cref{eq:hurwitznarayana}, one may wonder whether an analogous property persists more generally across the whole family of deformed monotone Hurwitz numbers. Overwhelming empirical evidence suggests that this is indeed the case and we have the following.

\begin{conjecture}[Interlacing] \label{con:interlacing}
The polynomial $\HN{g}{n}{\mu_1, \mu_2, \ldots, \mu_n}$ interlaces each of the $n$ polynomials
\[
\HN{g}{n}{\mu_1+1, \mu_2, \ldots, \mu_n}, \quad \HN{g}{n}{\mu_1, \mu_2+1, \ldots, \mu_n}, \quad \ldots, \quad \HN{g}{n}{\mu_1, \mu_2, \ldots, \mu_n+1}.
\]
\end{conjecture}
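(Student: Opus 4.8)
We have not been able to prove \cref{con:interlacing}, but the following is a plausible line of attack. The natural engine is the cut-and-join recursion (\cref{thm:cutjoin}), and the natural induction parameter is $r = |\mu| + 2g - 2 + n$, exactly as in the proof of \cref{prop:symmetry}. One should prove \cref{con:realrooted} and \cref{con:interlacing} \emph{simultaneously} by strong induction on $r$: in arguments of this flavour, real-rootedness of a polynomial assembled by a recursion is essentially never accessible without also propagating interlacing relations among the inputs. Throughout, write $f \preceq g$ to mean that $f$ is in \emph{proper position} relative to $g$, i.e.\ that $f$ interlaces $g$ in the sense of \cref{subsec:interlacing}, where we also allow the degenerate case $\deg g = \deg f$. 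By \cref{prop:symmetry}, $\HN{g}{n}{\mu}$ has degree $|\mu|-1$ and no constant term, so incrementing one $\mu_i$ raises the degree by exactly one and the statement is well posed; and since the Narayana polynomials have only negative real roots, it is natural to work inside the cone $\mathcal{C}$ of polynomials with nonnegative coefficients whose nonzero roots are all real and negative. The standard toolkit then applies: within $\mathcal{C}$, a product of two members again lies in $\mathcal{C}$ with dargas adding; if $f \preceq g_1$ and $f \preceq g_2$ (with compatible leading coefficients) then $f \preceq g_1 + g_2$; if $f \preceq g$ then $af + bg \in \mathcal{C}$ for all $a, b \geq 0$; $f \preceq g$ is equivalent to the Wronskian $f'g - fg'$ having constant sign; and $f \preceq (t+1)f$ for every $f \in \mathcal{C}$.

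As a warm-up, the one-point case $n = 1$ should be provable outright. The linear recursion of \cref{thm:onepoint},
\[
d^2 \, \HN{g}{1}{d} = (d-1)(2d-3)(t+1)\, \HN{g}{1}{d-1} - (d-2)(d-3)(t-1)^2\, \HN{g}{1}{d-2} + d^2(d-1)^2\, \HN{g-1}{1}{d},
\]
is of Sturm type once one records the sign pattern: a positive multiple of $t+1$ on the $\HN{g}{1}{d-1}$ term, a \emph{negative} multiple of the square $(t-1)^2$ on the $\HN{g}{1}{d-2}$ term, and a positive multiple of the genus-lowering term. For $g = 0$ this is precisely the recursion that makes $(\mathrm{Nar}_\mu)$ a real-rooted interlacing sequence~\cite{fis06}; for $g \geq 1$ one adds, at each step in $d$, a nonnegative multiple of $\HN{g-1}{1}{d}$, which by an inner induction on $g$ is already in proper position with the rest of the right-hand side, so that real-rootedness and the interlacing $\HN{g}{1}{d} \preceq \HN{g}{1}{d+1}$ propagate. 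The only delicate point is that the $-(t-1)^2$ term must not destroy the interlacing; it does not, precisely because $(t-1)^2 \geq 0$ while the moving roots are negative. This is the template for the general case.

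For general $n$, apply the recursion of \cref{eq:cutandjoin} with $\mu_1$ replaced by $\mu_1 + 1$, in the reformulation used in the proof of \cref{prop:symmetry} in which the two $\HN{0}{1}{1}$ factors are absorbed so that $t-1$ is replaced by the \emph{positive} factor $t+1$. Then $(\mu_1+1)\,\HN{g}{n}{\mu_1+1,\mu_S}$ becomes a nonnegative combination of: the ``join'' terms $(\mu_1+1+\mu_i)\,\HN{g}{n-1}{\mu_1+1+\mu_i,\mu_{S\setminus\{i\}}}$; the term $(t+1)\mu_1\,\HN{g}{n}{\mu_1,\mu_S}$; the ``genus'' terms $\HN{g-1}{n+1}{\alpha,\beta,\mu_S}$; and the ``splitting'' products $\HN{g_1}{|I_1|+1}{\alpha,\mu_{I_1}}\,\HN{g_2}{|I_2|+1}{\beta,\mu_{I_2}}$. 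By \cref{prop:symmetry} and the inductive hypothesis all of these lie in $\mathcal{C}$ with darga $|\mu|+1$, and the term $(t+1)\mu_1\,\HN{g}{n}{\mu_1,\mu_S}$ automatically has $\HN{g}{n}{\mu_1,\mu_S}$ as an interlacer. It therefore suffices to show that $\HN{g}{n}{\mu_1,\mu_S}$ is in proper position with each of the remaining summands; the closure lemma for common interlacers then yields $\HN{g}{n}{\mu_1,\mu_S} \preceq \HN{g}{n}{\mu_1+1,\mu_S}$, and relabelling disposes of the other $n-1$ interlacings demanded by the conjecture, while real-rootedness of $\HN{g}{n}{\mu_1+1,\mu_S}$ drops out as a by-product.

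The hard part — and the reason this remains conjectural — is precisely this last step, and it splits into two linked difficulties. First, a sum of products of \emph{a priori} unrelated interlacing families need not be real-rooted, so one must produce a single polynomial that is in proper position with every splitting product $\HN{g_1}{|I_1|+1}{\alpha,\mu_{I_1}}\HN{g_2}{|I_2|+1}{\beta,\mu_{I_2}}$ occurring for a fixed $(g,n,\mu)$ as $(\alpha,\beta)$, $(g_1,g_2)$ and $(I_1,I_2)$ vary; the plausible candidate is $\HN{g}{n}{\mu_1,\mu_S}$ itself, possibly times a linear factor, and establishing this universal proper-position property is the technical heart of the argument. Second, the single relation $\HN{g}{n}{\mu}\preceq\HN{g}{n}{\mu_1+1,\mu_S}$ is almost certainly not self-reproducing under the recursion: one will need to carry along a larger web of proper-position relations — at a minimum, $\HN{g}{n}{\mu_1,\mu_S}$ versus each of $\HN{g}{n-1}{\mu_1+1+\mu_i,\mu_{S\setminus\{i\}}}$ and $\HN{g-1}{n+1}{\alpha,\beta,\mu_S}$ and the splitting products — and close the entire package at once within the same induction on $r$, with base cases checked against the data in \cref{app:data}. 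Identifying the minimal self-reproducing family of such relations is where I expect nearly all of the genuine work, and the real risk, to lie; once it is in place, the $\Lambda$-polynomial closure lemmas of \cref{prop:symmetry} and routine Wronskian sign computations should carry the bookkeeping.
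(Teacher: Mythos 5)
You are addressing \cref{con:interlacing}, which the paper itself leaves as a conjecture: there is no complete proof in the paper, and your proposal, as you candidly state, does not supply one either. The paper's actual progress consists of \cref{lem:interlacing} together with \cref{prop:genus01}, which settle only the cases $(g,n)=(0,1)$ and $(1,1)$, plus extensive numerical verification. Your proposed framework for general $n$ --- a simultaneous strong induction on $r=|\mu|+2g-2+n$ through the cut-and-join recursion, carrying a self-reproducing web of proper-position relations --- is a reasonable roadmap, goes beyond anything attempted in the paper, and is honestly flagged as unresolved at exactly the points (common interlacers for the splitting products, closure of the web of relations) where the genuine difficulty lies.

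The one concrete claim you make that conflicts with the paper is your assertion that the one-point case ``should be provable outright'' for all $g$ by a Sturm-type argument with an inner induction on $g$. The obstruction is the term $d^2(d-1)^2\,\HN{g-1}{1}{d}$ in \cref{thm:onepoint}: it has the same degree $d-1$ as the target $\HN{g}{1}{d}$, so for $g\geq 1$ the recursion is not of the three-term form required by \cref{lem:interlacing}, and the sign of $\HN{g-1}{1}{d}$ at the roots of $\HN{g}{1}{d-1}$ is not controlled by the relations you propose to propagate --- it would require mixed proper-position statements such as $\HN{g}{1}{d-1}$ versus $\HN{g-1}{1}{d}$, which are neither part of the conjecture nor established anywhere. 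The paper's route at genus one is not to absorb the genus-lowering term but to eliminate it: it proves a separate pure genus-one three-term recursion, \cref{eq:genus1point}, with the new coefficients $(d-1)(2d-1)(t+1)$ and $(d-2)(d+1)(t-1)^2$ and no genus-lowering term, verified via the explicit $\omega_{1,1}$ of \cref{thm:TR} and the differential equation \cref{eq:diffeqw11}. Moreover, the remark following \cref{prop:genus01} shows that no recursion of this form exists for $\HN{2}{1}{d}$, so the inner induction on $g$ you sketch provably cannot continue past genus one without a genuinely new idea. In short: your general plan is a plausible but openly incomplete programme, and the one place where you claim a proof should go through is precisely where the paper documents that the method breaks.
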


\cref{con:interlacing} states that for fixed $(g,n)$, one obtains a lattice of interlacing polynomials. It has been checked computationally for many cases, including the following, amounting to 1430 independent checks:
\begin{itemize}
\item $g \in \{0, 1\}$, $n \in \{1, 2, 3, 4, 5\}$, $|\mu| \leq 15$;
\item $g \in \{2, 3\}$, $n \in \{1, 2, 3, 4, 5\}$, $|\mu| \leq 12$;
\item $g \in \{4, 5\}$, $n \in \{1, 2, 3, 4, 5\}$, $|\mu| \leq 10$.
\end{itemize}

One can observe rich structure in the roots of the deformed monotone Hurwitz numbers and write down further conjectures. As an example, consider the following, which asserts that the roots of $\HN{g}{n}{\mu_1, \ldots, \mu_n}$ behave well for fixed $\mu_1, \ldots, \mu_n$ and increasing $g$. A great deal of data can be generated to support this conjecture, some examples of which appear in the tables of \cref{table:gincreasing1,table:gincreasing2}.

\begin{conjecture} \label{con:largeg}
Fix positive integers $\mu_1, \ldots, \mu_n$ whose sum is $d$. As $g \to \infty$, the roots of the polynomial $\HN{g}{n}{\mu_1, \ldots, \mu_n}$, in order from smallest to largest, respectively approach the values
\[
- \frac{d-2}{1}, ~~ - \frac{d-3}{2}, ~~ - \frac{d-4}{3}, ~~ \ldots, ~~ - \frac{2}{d-3}, ~~ - \frac{1}{d-2}, ~~ 0.
\]
Moreover, the convergence to a number less than $-1$ is increasing from below, while the convergence to a non-zero number greater than $-1$ is decreasing from above.
\end{conjecture}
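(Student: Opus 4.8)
Write $d = |\mu| = \mu_1 + \cdots + \mu_n$ and assume $d \geq 2$ (for $d = 1$ there is nothing to prove). The plan is to pin down the large-genus asymptotics of $\HN{g}{n}{\mu_1, \ldots, \mu_n}$; the conjecture will then follow by continuity of roots. Concretely, I expect a constant $\kappa(\mu) > 0$, independent of $g$, with
\[
(d-1)^{-2g}\,\HN{g}{n}{\mu_1, \ldots, \mu_n} \longrightarrow \kappa(\mu)\,t\prod_{k=1}^{d-2}\bigl(kt + d-1-k\bigr) \qquad \text{as } g \to \infty,
\]
coefficientwise. The right-hand side is a polynomial of degree $d-1$ --- the same degree as $\HN{g}{n}{\mu}$ by \cref{prop:symmetry} --- with the $d-1$ distinct roots $0$ and $-\tfrac{d-1-k}{k}$ for $k = 1, \ldots, d-2$; in increasing order these are exactly $-\tfrac{d-2}{1}, -\tfrac{d-3}{2}, \ldots, -\tfrac{1}{d-2}, 0$. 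As they are simple, the $d-1$ roots of $\HN{g}{n}{\mu}$ converge to them, giving the first assertion.

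To establish the displayed limit I would begin with the disconnected numbers and the character formula of \cref{prop:reptheory}. For fixed $\lambda \vdash d$ the factor $\prod_{\square \in \lambda}\frac{1 - \h c(\square) + t\h c(\square)}{1 - \h c(\square)}$ is a rational function of $\h$ with poles at $\h = 1/c$ over the nonzero contents $c$ of $\lambda$, so by partial fractions its coefficient of $\h^{d+2g-2+n}$ grows, as $g \to \infty$, like a constant times $\bigl(\max_c |c|\bigr)^{d+2g-2+n}$; moreover the extreme poles $\h = \pm 1/(d-1)$ and $\h = \pm 1/(d-2)$ are always simple, so no polynomial-in-$g$ factors arise there. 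The largest content over all $\lambda \vdash d$ is $d-1$, attained only by the row $\lambda = (d)$, and the most negative is $-(d-1)$, attained only by the column $\lambda = (1^d)$; hence only these two partitions contribute at the top exponential order $(d-1)^{d+2g-2+n}$. A residue computation at $\h = \pm 1/(d-1)$ shows each contributes a positive multiple of $t\prod_{k=1}^{d-2}(kt+d-1-k)$ times $\bigl(\pm(d-1)\bigr)^{d+2g-2+n}$, and since $\chi^{(d)} \equiv 1$, $\chi^{(1^d)} = \operatorname{sgn}$, and $(-1)^{d-n}(-1)^{d+2g-2+n} = 1$, the two contributions reinforce rather than cancel. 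This gives the asymptotics of $\dHN{g}{n}{\mu}$ with an explicit $\kappa(\mu) > 0$.

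Passing to the connected numbers is the next step, using the exponential relation of \cref{prop:integral}: $\dHN{g}{n}{\mu}$ is a sum over set partitions of products of connected numbers, and every summand other than $\HN{g}{n}{\mu}$ itself contains a connected component for which the sum of parts is at most $d-1$ and which carries essentially all of the genus, so it grows at most like $(d-2)^{2g}$. Since $\HN{g}{n}{\mu}$ has non-negative coefficients it lies between $0$ and $\dHN{g}{n}{\mu}$ on $[0,\infty)$, hence has the same leading asymptotics, which proves the displayed limit and the first half of the conjecture. For the ``moreover'' clause one needs the second-order term: write $\HN{g}{n}{\mu} = \kappa(\mu)L_d(t)(d-1)^{2g} + D(\mu;t)(d-2)^{2g} + o\bigl((d-2)^{2g}\bigr)$ with $L_d(t) = t\prod_{k=1}^{d-2}(kt+d-1-k)$; then the root of $\HN{g}{n}{\mu}$ near a simple root $t^\ast = -\tfrac{d-1-k_0}{k_0}$ of $L_d$ equals $t^\ast + \delta_g$ with $\delta_g \sim -\frac{D(\mu;t^\ast)}{\kappa(\mu)\,L_d'(t^\ast)}\bigl(\tfrac{d-2}{d-1}\bigr)^{2g}$, which is monotone in $g$ because $\bigl(\tfrac{d-2}{d-1}\bigr)^{2g}$ is, and one-sided. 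In the one-point case $n = 1$ one can carry this through: extracting the coefficient of $(d-2)^{2g}$ from the one-point recursion of \cref{thm:onepoint} yields $D\bigl((d);t\bigr) = -\frac{(d-1)(d-2)^2(t+1)}{d^2}\,\kappa\bigl((d-1)\bigr)L_{d-1}(t)$, and then $\operatorname{sign}L_d'(t^\ast) = (-1)^{d-1-k_0}$, $\operatorname{sign}L_{d-1}(t^\ast) = (-1)^{d-1-k_0}$, $t^\ast < 0$ and $t^\ast + 1 = \tfrac{2k_0-(d-1)}{k_0}$ collapse everything to $\operatorname{sign}(\delta_g) = \operatorname{sign}(t^\ast+1)$, which is precisely ``increasing from below'' when $t^\ast < -1$ and ``decreasing from above'' when $-1 < t^\ast < 0$; at the borderline $t^\ast = -1$ (possible only for odd $d$) one has $D((d);-1) = 0$ and the conjecture makes no claim, consistently.

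The main obstacle is the ``moreover'' clause for general $n$, where $D(\mu;t)$ is not governed by a single recursion: it combines the subleading poles at $\h = \pm 1/(d-2)$ of the character contributions of $\lambda \in \{(d), (1^d), (d-1,1), (2,1^{d-2})\}$ with the order-$(d-2)^{2g}$ part of the disconnected-to-connected correction --- the latter coming from splitting off one marked point that carries a part equal to $1$. Each ingredient is an explicit multiple of $L_{d-1}(t)$ or $(t+1)L_{d-1}(t)$, so what remains is to assemble them and verify, at each root $t^\ast$, the sign identity that makes the displayed $\delta_g$ one-sided with $\operatorname{sign}(\delta_g) = \operatorname{sign}(t^\ast + 1)$; I expect this bookkeeping to be the crux. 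A cleaner alternative would be to run the whole argument by induction on $n$ via the cut-and-join recursion of \cref{thm:cutjoin}: at order $(d-1)^{2g}$ the parameter $t$ drops out of that recursion except through its base cases, so the limiting polynomial must be a positive multiple of $L_d(t)$ for every $(g,n,\mu)$ with $|\mu| = d$, and the next order can be controlled in the same way. In either route, the final ``continuity of roots'' step needs only the elementary fact that simple real roots of a coefficientwise-convergent sequence of real polynomials of fixed degree converge, together with $\deg \HN{g}{n}{\mu} = d-1$ from \cref{prop:symmetry}; establishing $\kappa(\mu) > 0$ throughout is a routine induction.
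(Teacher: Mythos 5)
First, a point of comparison that matters: the paper does not prove this statement. \cref{con:largeg} is offered purely as a conjecture supported by numerical data (the tables in the figures accompanying it), so there is no proof of the authors' to measure your proposal against; a completed version of your argument would go beyond what the paper establishes. On its merits, the first half of your strategy looks sound and is, to my knowledge, the natural route: in the character formula of \cref{prop:reptheory} the coefficient extraction is an exact finite sum over the poles $\h = 1/c(\square)$, the extreme contents $\pm(d-1)$ occur only for $\lambda = (d)$ and $\lambda = (1^d)$ and only once each, your sign bookkeeping $(-1)^{d-n}(-1)^{d+2g-2+n} = 1$ is correct, and the residue at the extreme poles is indeed a positive multiple of $t\prod_{k=1}^{d-2}(kt+d-1-k)$, whose roots are exactly the conjectured limits. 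Together with the disconnected-to-connected comparison via \cref{prop:integral} and continuity of simple roots, this would plausibly yield the convergence statement, provided you replace the sandwich ``$0 \leq \HN{g}{n}{\mu} \leq \dHN{g}{n}{\mu}$ on $[0,\infty)$'' (which says nothing about roots, all of which are negative) by the coefficientwise estimate you in fact need, and prove the induction on $d$ controlling the components in the exponential formula.

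The genuine gap is the ``moreover'' clause, and you acknowledge it. For general $n$ you have not assembled the subleading term $D(\mu;t)$ at order $(d-2)^{2g}$, nor verified the sign identity $\operatorname{sign}(\delta_g)=\operatorname{sign}(t^\ast+1)$ at every root, nor shown $D(\mu;t^\ast)\neq 0$ (without which the argument gives no one-sidedness at all); even for $n=1$ the identity you extract from \cref{thm:onepoint} is asserted rather than derived, and the non-vanishing of $L_{d-1}(t^\ast)$ at the relevant roots needs a (short) argument. Moreover, your mechanism only ever gives \emph{eventual} one-sided monotone behaviour --- $\delta_g \sim C\bigl(\tfrac{d-2}{d-1}\bigr)^{2g}$ controls the sign and monotonicity for $g$ large, not for all $g$, whereas the paper's data (and the most natural reading of the conjecture) exhibit monotonicity from the outset; closing that gap would require uniform, not just asymptotic, control of the lower-order terms. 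The disconnected-to-connected correction at order $(d-2)^{2g}$ also needs a case split you currently gloss over: it is present only when some $\mu_i=1$ (splitting off an $\HN{0}{1}{1}$ factor), and otherwise the correction is of strictly smaller exponential order. So: a promising and essentially correct plan for the convergence claim, but the interlacing-in-$g$/monotonicity half of \cref{con:largeg} remains unproven in your proposal, as it does in the paper.
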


\begin{figure} [ht!]
\begin{tabularx}{\textwidth}{cccYccY} \toprule
$(\mu_1, \ldots, \mu_n)$ & $\alpha_1$ & $\alpha_2$ & $\alpha_3$ & $\alpha_4$ & $\alpha_5$ & $\alpha_6$ \\ \midrule
$(7)$ & -5.0012679418 & -2.0003283655 & $-1$ & -0.49991792208 & -0.19994929518 & $0$ \\
$(6, 1)$ & -5.0012679418 & -2.0003283655 & $-1$ & -0.49991792208 & -0.19994929518 & $0$ \\
$(5, 2)$ & -5.0010564352 & -2.0002736067 & $-1$ & -0.49993160767 & -0.19995775151 & $0$ \\
$(5, 1, 1)$ & -5.0012326847 & -2.0003192382 & $-1$ & -0.49992020316 & -0.19995070476 & $0$ \\
$(4, 3)$ & -5.0010564383 & -2.0002736143 & $-1$ & -0.49993160576 & -0.19995775139 & $0$ \\
$(4, 2, 1)$ & -5.0010564362 & -2.0002736092 & $-1$ & -0.49993160703 & -0.19995775147 & $0$ \\
$(4, 1, 1, 1)$ & -5.0011739286 & -2.0003040277 & $-1$ & -0.49992400462 & -0.19995305387 & $0$ \\
$(3, 3, 1)$ & -5.0010564383 & -2.0002736143 & $-1$ & -0.49993160576 & -0.19995775139 & $0$ \\
$(3, 2, 2)$ & -5.0008802353 & -2.0002279852 & $-1$ & -0.49994301017 & -0.19996479678 & $0$ \\
$(3, 2, 1, 1)$ & -5.0010270659 & -2.0002660064 & $-1$ & -0.49993350723 & -0.19995892579 & $0$ \\
$(3, 1, 1, 1, 1)$ & -5.0011004921 & -2.0002850163 & $-1$ & -0.49992875605 & -0.19995599000 & $0$ \\
$(2, 2, 2, 1)$ & -5.0008802353 & -2.0002279852 & $-1$ & -0.49994301017 & -0.19996479678 & $0$ \\
$(2, 2, 1, 1, 1)$ & -5.0009781178 & -2.0002533320 & $-1$ & -0.49993667500 & -0.19996088293 & $0$ \\
$(2, 1, 1, 1, 1, 1)$ & -5.0010189060 & -2.0002638930 & $-1$ & -0.49993403543 & -0.19995925206 & $0$ \\
$(1, 1, 1, 1, 1, 1, 1)$ & -5.0010189060 & -2.000263893081 & $-1$ & -0.49993403543 & -0.19995925206 & $0$ \\ \bottomrule
\end{tabularx}
\caption{The roots $\alpha_1 \leq \alpha_2 \leq \cdots \leq \alpha_6$ of $\HN{20}{n}{\mu_1, \ldots, \mu_n}$ for all $\mu_1, \ldots, \mu_n$ satisfying $|\mu| = 7$, to 10 decimal places. Observe the proximity of the numbers in each column to $-\frac{5}{1}, -\frac{4}{2}, -\frac{3}{3}, -\frac{2}{4}, -\frac{1}{5}, 0$, as predicted by \cref{con:largeg}.}
\label{table:gincreasing1}
\end{figure}

\begin{figure} [ht!]
\begin{tabularx}{\textwidth}{cccYccY} \toprule
$g$ & $\alpha_1$ & $\alpha_2$ & $\alpha_3$ & $\alpha_4$ & $\alpha_5$ & $\alpha_6$ \\ \midrule
$10$ & $-5.041604716958$ & $-2.010612015758$ & $-1$ & $-0.4973609986225$ & $-0.1983495446670$ & $0$ \\
$11$ & $-5.028663679645$ & $-2.007345500817$ & $-1$ & $-0.4981703446630$ & $-0.1988599882007$ & $0$ \\
$12$ & $-5.019792253540$ & $-2.005088769701$ & $-1$ & $-0.4987310363066$ & $-0.1992114313684$ & $0$ \\
$13$ & $-5.013688662391$ & $-2.003527619463$ & $-1$ & $-0.4991196479078$ & $-0.1994539484474$ & $0$ \\
$14$ & $-5.009478345470$ & $-2.002446572250$ & $-1$ & $-0.4993891042376$ & $-0.1996215835335$ & $0$ \\
$15$ & $-5.006568518035$ & $-2.001697414872$ & $-1$ & $-0.4995760061286$ & $-0.1997376039891$ & $0$ \\
$16$ & $-5.004554730409$ & $-2.001177961094$ & $-1$ & $-0.4997056830732$ & $-0.1998179765971$ & $0$ \\
$17$ & $-5.003159687696$ & $-2.000817629297$ & $-1$ & $-0.4997956762061$ & $-0.1998736923107$ & $0$ \\
$18$ & $-5.002192595245$ & $-2.000567599393$ & $-1$ & $-0.4998581404112$ & $-0.19991233463311$ & $0$ \\
$19$ & $-5.001521834116$ & $-2.000394067634$ & $-1$ & $-0.4999015024988$ & $-0.1999391451575$ & $0$ \\
$20$ & $-5.001056436287$ & $-2.000273609283$ & $-1$ & $-0.4999316070356$ & $-0.1999577514750$ & $0$ \\ \bottomrule
\end{tabularx}
\caption{The roots $\alpha_1 \leq \alpha_2 \leq \cdots \leq \alpha_6$ of $\HN{g}{3}{4, 2, 1}$ for $g = 10, 11, 12, \ldots, 20$, to 12 decimal places. Observe the monotonic convergence of the numbers in each column to $-\frac{5}{1}, -\frac{4}{2}, -\frac{3}{3}, -\frac{2}{4}, -\frac{1}{5}, 0$, as predicted by \cref{con:largeg}.}
\label{table:gincreasing2}
\end{figure}

\subsubsection*{An interlacing result}

There is a rich theory concerning interlacing polynomials, as evidenced by the tome of Fisk~\cite{fis06}. The following lemma is a slight adaptation of~\cite[Lemma~1.82]{fis06}, and will be used to prove the $(g,n) = (0,1)$ and $(g,n) = (1,1)$ cases of \cref{con:realrooted,con:interlacing}.

\begin{lemma} \label{lem:interlacing}
Let $f_1(t), f_2(t), \ldots$ be a sequence of polynomials with non-negative coefficients, such that $\deg f_i = i$. Suppose that $f_1(t)$ strictly interlaces $f_2(t)$, and that the sequence of polynomials satisfies the relation
\begin{equation} \label{eq:interlacingrecursion}
f_{d+1}(t) = \ell_d(t) \, f_{d}(t) - q_d(t) \, f_{d-1}(t),
\end{equation}
where $\ell_d(t)$ is an affine function and $q_d(t)$ is a quadratic function that is positive for $t \leq 0$. Then all polynomials in the sequence are real-rooted and $f_i(t)$ strictly interlaces $f_{i+1}(t)$ for every positive integer $i$.
\end{lemma}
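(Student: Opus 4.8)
The plan is to prove both assertions simultaneously by induction on $d$, with the inductive statement being: $f_{d-1}$ and $f_d$ are real-rooted, and $f_{d-1}$ strictly interlaces $f_d$. The base case $d = 2$ is precisely the hypothesis of the lemma (a strict interlacing of $f_1$ and $f_2$ in particular records that both are real-rooted). Before the inductive step I would record the elementary but crucial fact that, because each $f_i$ has non-negative coefficients and positive leading coefficient, every real root of $f_i$ lies in the interval $(-\infty, 0]$; in particular all roots of $f_d$ lie in the region where $q_d$ is, by hypothesis, strictly positive.

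For the inductive step, write the roots of $f_d$ as $b_1 < b_2 < \cdots < b_d$ (these are genuinely distinct, since $f_{d-1}$ strictly interlaces $f_d$). Evaluating the recursion \eqref{eq:interlacingrecursion} at $t = b_j$ kills the $\ell_d f_d$ term and leaves $f_{d+1}(b_j) = -q_d(b_j) \, f_{d-1}(b_j)$; since $q_d(b_j) > 0$, the sign of $f_{d+1}$ at $b_j$ is the opposite of the sign of $f_{d-1}$ at $b_j$. The strict interlacing of $f_{d-1}$ with $f_d$ places exactly one root of $f_{d-1}$ in each open interval $(b_j, b_{j+1})$, so $f_{d-1}$ --- and hence $f_{d+1}$ --- changes sign between consecutive $b_j$'s. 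This yields $d - 1$ roots of $f_{d+1}$, one in each of $(b_1,b_2), \ldots, (b_{d-1}, b_d)$.

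To finish I would locate the two outermost roots by a sign count at the ends. On the right, $b_d$ lies beyond all roots of $f_{d-1}$, so $f_{d-1}(b_d) > 0$ and thus $f_{d+1}(b_d) < 0$; as $f_{d+1}$ has positive leading coefficient it tends to $+\infty$, giving a root in $(b_d, \infty)$. On the left, $b_1$ lies below all roots of $f_{d-1}$, so $f_{d-1}(b_1)$ has sign $(-1)^{d-1}$ and $f_{d+1}(b_1)$ has sign $(-1)^{d}$, whereas $f_{d+1}(t)$ has sign $(-1)^{d+1}$ as $t \to -\infty$; this opposite sign gives a root in $(-\infty, b_1)$. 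We have now exhibited $d+1$ real roots of $f_{d+1}$ in disjoint intervals; since $\deg f_{d+1} = d+1$, these are all of them and they are simple, and since $f_{d+1}(b_j) \neq 0$ for each $j$ they are strictly separated by the $b_j$. That is exactly the statement that $f_{d+1}$ is real-rooted and that $f_d$ strictly interlaces $f_{d+1}$, completing the induction.

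I expect the argument to be essentially routine; the one place that needs care is the bookkeeping of signs at $b_1$, $b_d$, and at $\pm\infty$, together with the observation that confines all roots of the $f_i$ to $(-\infty, 0]$ --- it is precisely this confinement that lets the weaker hypothesis ``$q_d$ positive on $t \leq 0$'' do the work of global positivity, so one must not forget to invoke it. If one also wishes to cover the degenerate convention flagged just before the lemma, where $f_1$ is constant and $f_2$ affine, the same sign computation applies verbatim after starting the induction one step earlier.
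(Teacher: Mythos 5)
Your proposal is correct and follows essentially the same argument as the paper: evaluate the recursion at the roots of $f_d$ (which lie in $(-\infty,0]$ by non-negativity of coefficients, so $q_d$ is positive there), use the sign alternation of $f_{d-1}$ at those roots coming from strict interlacing, and conclude by the intermediate value theorem and sign counts at $\pm\infty$ that $f_{d+1}$ has $d+1$ real roots strictly interlaced by those of $f_d$, then induct. Your treatment of the outermost roots is just a more explicit version of the sign table in the paper's proof.
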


\begin{proof}
Suppose that $f_{d-1}(t)$ strictly interlaces $f_d(t)$. Since $f_d(t)$ has non-negative integer coefficients, its roots can be written as $0 \geq r_1 > r_2 > \cdots > r_d$. Then \cref{eq:interlacingrecursion} evaluated at one of these roots yields
\[
f_{d+1}(r_i) = - q_d(r_i) \, f_{d-1}(r_i),
\]
which implies that $f_{d+1}(r_i)$ and $f_{d-1}(r_i)$ have different signs. Since $f_{d-1}(t)$ strictly interlaces $f_d(t)$, its values at $r_1, r_2, \ldots, r_d$ must alternate in sign, leading to the following table. The entries indicate the signs of $f_{d-1}(t)$ and $f_{d+1}(t)$, evaluated at $r_1, r_2, \ldots, r_d$ and in the limit $t \to \pm\infty$.

\begin{center}
\begin{tabular}{cccccccc} \toprule
 & $\infty$ & $r_1$ & $r_2$ & $r_3$ & $\cdots$ & $r_d$ & $-\infty$ \\ \midrule
$f_{d-1}(t)$ & $+$ & $+$ & $-$ & $+$& $\cdots$ & $(-)^{d-1}$ & $(-)^{d-1}$ \\
$f_{d+1}(t)$ & $+$ & $-$ & $+$ & $-$ & $\cdots$ & $(-)^d$ & $(-)^{d+1}$ \\ \bottomrule
\end{tabular}
\end{center}

The intermediate value theorem implies that $f_{d+1}(t)$ has $d+1$ real roots and that $f_d(t)$ strictly interlaces $f_{d+1}(t)$. The result then follows by induction on $d$.
\end{proof}

\begin{proposition} \label{prop:genus01}
\cref{con:realrooted,con:interlacing} hold for $(g,n) = (0,1)$ and $(g,n) = (1,1)$.
\end{proposition}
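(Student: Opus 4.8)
The plan is to reduce both cases to three-term recursions of the shape handled by \cref{lem:interlacing}, after first dividing out the common factor of $t$. By \cref{prop:symmetry}, for every $d \geq 2$ the polynomial $\HN{g}{1}{d}$ has degree $d-1$ and vanishing constant term, so $\HN{g}{1}{d} = t \cdot P^{[g]}_{d-2}(t)$ with $P^{[g]}_{d-2}$ a non-negative polynomial of degree $d-2$; real-rootedness and interlacing for the $\HN{g}{1}{d}$ then follow from the corresponding statements for the $P^{[g]}_j$ by re-attaching the root $0$, with the remaining tiny cases ($d=1$, and $\HN{g}{1}{1}$ against $\HN{g}{1}{2}$) covered by the constant/affine convention. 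Accordingly I set $f_j := \HN{0}{1}{j+2}/t$ and $g_j := \HN{1}{1}{j+2}/t$, each of degree $j$ with non-negative coefficients, and $f_0 = g_0 = \tfrac12$.

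For $(g,n)=(0,1)$ I would divide the one-point recursion of \cref{thm:onepoint} (with $g=0$, valid for index at least $3$, with base cases $\HN{0}{1}{1}=1$ and $\HN{0}{1}{2}=\tfrac12 t$) by $t$ and reindex to obtain
\begin{equation*}
(d+3)^2\, f_{d+1} = (d+2)(2d+3)(t+1)\, f_d - (d+1)d\,(t-1)^2\, f_{d-1}, \qquad d \geq 1 .
\end{equation*}
The coefficient of $f_d$ is affine with positive leading coefficient, the coefficient of $f_{d-1}$ is a positive constant times $(t-1)^2$ (hence positive for $t\leq 0$), and one checks by hand that $f_1=\tfrac13(t+1)$ strictly interlaces $f_2=\tfrac14(t^2+3t+1)$ (roots $-1$ versus $\tfrac{-3\pm\sqrt 5}{2}$). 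So \cref{lem:interlacing} applies verbatim, giving that each $f_j$ is real-rooted and $f_j$ strictly interlaces $f_{j+1}$; translating back settles \cref{con:realrooted,con:interlacing} for $(0,1)$.

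For $(g,n)=(1,1)$ the same manipulation yields
\begin{equation*}
(d+3)^2\, g_{d+1} = (d+2)(2d+3)(t+1)\, g_d - (d+1)d\,(t-1)^2\, g_{d-1} + (d+3)^2(d+2)^2\, f_{d+1}, \qquad d \geq 0,
\end{equation*}
i.e. the $f$-recursion plus a positive multiple of $f_{d+1}$. I would prove by induction on $d$ the conjunction of: (a) $g_d$ is real-rooted; (b) $g_{d-1}$ strictly interlaces $g_d$; and (c) $g_{d-1}$ strictly interlaces $f_d$. The inductive step re-runs the sign-chase from the proof of \cref{lem:interlacing}: one first establishes (c) at level $d+1$, then evaluates the $g$-recursion at a root $\rho$ of $g_d$, which kills the $(t+1)g_d$ term; the homogeneous remainder $-(\text{positive})(\rho-1)^2 g_{d-1}(\rho)$ carries exactly the sign dictated by (b) for $g_d$ to interlace $g_{d+1}$, and the extra term, a positive multiple of $f_{d+1}(\rho)$, carries the same sign (or vanishes) precisely by (c) at level $d+1$ — equivalently, $g_d$ interlaces a sum of two polynomials that it interlaces individually, by the cone property of interlacing. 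Real-rootedness of $g_{d+1}$ is then automatic from being interlaced, and re-attaching $0$ gives \cref{con:realrooted,con:interlacing} for $(1,1)$.

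The crux, and the step I expect to be the main obstacle, is establishing statement (c), namely that $g_d$ strictly interlaces $f_{d+1}$, and in particular propagating it through the induction. Attempting to pin down the sign of $f_{d+1}$ at the roots of $g_d$ via either the $f$-recursion or the $g$-recursion leaves an affine factor $(t+1)$ multiplying a degree-$d$ polynomial whose sign at those roots is not forced by interlacing alone, since the zero $t=-1$ lies in the interior of the root locus of all these polynomials. Overcoming this will require a finer positional analysis — locating the roots of $g_d$ relative to $-1$ and to the roots of $f_d$, very likely by carrying an additional auxiliary relation such as ``$f_{d-1}$ strictly interlaces $g_d$'', so that $g_d$ and $f_d$ share the two interlacers $f_{d-1}$ and $g_{d-1}$ and one can play them off against each other. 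This is bookkeeping rather than a conceptual hurdle; the numerical data (for instance, $g_1$ interlaces $f_2$, $g_2$ interlaces $f_3$, and $g_3$ interlaces $f_4$) makes clear that the induction closes.
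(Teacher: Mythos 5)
Your $(0,1)$ argument is essentially the paper's own: divide the $g=0$ case of \cref{thm:onepoint} by $t$, check the small base case by hand, and invoke \cref{lem:interlacing}; that half is fine. The $(1,1)$ half, however, contains a genuine gap, and it is exactly the step you flag yourself. Your induction runs through the \emph{inhomogeneous} recursion obtained from \cref{thm:onepoint} with $g=1$, whose extra term is a positive multiple of the genus-zero polynomial $f_{d+1}$, and the sign-chase closes only if you know statement (c): that $g_d$ strictly interlaces $f_{d+1}$. You never prove this cross-family interlacing, and your own diagnosis shows why it is not ``bookkeeping'': the sign of $f_{d+1}$ at the roots of $g_d$ is not forced by any of the interlacing data you carry, because the factor $t+1$ sits strictly inside the root locus of all these polynomials. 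The proposed remedy --- carrying further auxiliary relations such as ``$f_{d-1}$ strictly interlaces $g_d$'' --- consists of more unproven cross-interlacings of the same kind, so the induction as described does not close. (Granting (c), the rest would indeed go through, since a common interlacer passes to positive combinations; the missing ingredient is (c) itself.)

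The paper sidesteps this entirely with a different input: it establishes a second, \emph{homogeneous} three-term recursion satisfied by the genus-one one-point invariants alone, namely $d(d-2)\,\HN{1}{1}{d} = (d-1)(2d-1)(t+1)\,\HN{1}{1}{d-1} - (d-2)(d+1)(t-1)^2\,\HN{1}{1}{d-2}$ (\cref{eq:genus1point}), whose coefficients differ from those of the genus-zero recursion. This identity is not obtained by combinatorial manipulation of \cref{thm:onepoint}; it is proved by recasting it as a differential equation for $w_{1,1}(x)$ and verifying that equation against the closed form of $\omega_{1,1}$ provided by \cref{thm:TR}, via a computer-algebra check. With \cref{eq:genus1point} in hand, \cref{lem:interlacing} applies verbatim, exactly as in genus zero, with no reference to the genus-zero family. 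That this extra recursion is a substantive ingredient (and not a formality your sign-chasing could replace) is underlined by the paper's remark that no recursion of this homogeneous shape exists for $\HN{2}{1}{d}$. To complete your proof you would need either to prove your claim (c) or to supply something playing the role of \cref{eq:genus1point}; as written, the $(1,1)$ case remains open.
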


\begin{proof}
Setting $g = 0$ in the one-point recursion of \cref{thm:onepoint} yields
\begin{equation} \label{eq:genus0point}
d^2 \HN{0}{1}{d} = (d-1) (2d-3) (t+1) \, \HN{0}{1}{d-1} - (d-2) (d-3) (t-1)^2 \, \HN{0}{1}{d-2}.
\end{equation}
Divide both sides by $d^2t$ to obtain a recursion governing the polynomials $\frac{\HN{0}{1}{1}}{t}, \frac{\HN{0}{1}{2}}{t}, \frac{\HN{0}{1}{3}}{t}, \ldots$ of the form of \cref{eq:interlacingrecursion}. It follows from \cref{lem:interlacing} that this sequence is real-rooted and strictly interlacing. Therefore, the sequence $\HN{0}{1}{1}, \HN{0}{1}{2}, \HN{0}{1}{3}, \ldots$ is a sequence of real-rooted polynomials in which each polynomial interlaces the next.

For the genus 1 case, we argue in exactly the same way, using the as yet unproven claim that
\begin{equation} \label{eq:genus1point}
d(d-2) \, \HN{1}{1}{d} = (d-1) (2d-1) (t+1) \, \HN{1}{1}{d-1} - (d-2) (d+1) (t-1)^2 \, \HN{1}{1}{d-2}.
\end{equation}

It remains to prove this relation, which we do using a generating function approach. Multiply both sides by $x^{d-1}$ and sum over all positive integers $d$. Setting $w_{1,1}(x) = \sum_{d=1}^\infty d \, \HN{1}{1}{d} \, x^{d-1}$, the claim is equivalent to
\begin{equation} \label{eq:diffeqw11}
-x \left[ (t-1)^2 x^2 - 2(t+1) x + 1 \right] \frac{\partial}{\partial x} w_{1,1}(x) = \left[ 4(t-1)^2 x^2 - 3(t+1) x - 1 \right] w_{1,1}(x).
\end{equation}

We borrow \cref{thm:TR} from the next section, which allows us to explicitly compute via $w_{1,1}(x) = \frac{\omega_{1,1}(z)}{\dd x}$ that
\begin{equation} \label{eq:w11}
w_{1,1}(x) = -\frac{t z (z-1) (1 - z + tz)^4}{(tz^2 - z^2 + 2z - 1)^5},
\end{equation}
where the coordinates $x$ and $z$ are related by $x = \frac{z(1 - z)}{1-z+tz}$. One can now check that the expression for $w_{1,1}(x)$ of \cref{eq:w11} satisfies the differential equation of \cref{eq:diffeqw11} using a computer algebra system, thereby proving \cref{eq:genus1point}.
\end{proof}

\begin{remark}
It is natural to seek higher genus analogues of \cref{eq:genus0point,eq:genus1point}, in order to extend the result and proof of \cref{prop:genus01}. However, one can prove that there exists no recursion of the form
\[
d A(d) \, \HN{2}{1}{d} = (d-1) B(d) f(t) \, \HN{2}{1}{d-1} - (d-2) C(d) g(t) \, \HN{2}{1}{d-2},
\]
where $A(d), B(d), C(d)$ are affine functions of $d$, $f(t)$ is an affine function of $t$, and $g(t)$ is a quadratic function of $t$. So the exact argument used in the proof of \cref{prop:genus01} cannot extend to genus 2 and higher without some alteration.
\end{remark}

\section{Topological recursion} \label{sec:TR}

\subsection{Topological recursion for deformed monotone Hurwitz numbers}

The topological recursion arose in the mathematical physics literature as a formalism to capture loop equations in matrix models~\cite{che-eyn06, eyn-ora07}. There is now an extensive theory around topological recursion and it is known to govern a wide range of enumerative-geometric problems beyond the original matrix model context. These include: Hurwitz numbers and various generalisations~\cite{bou-mar08, BHLM14, do-lei-nor16, eyn-mul-saf11, BDKLM22, DKPS19}, monotone Hurwitz numbers~\cite{do-dye-mat17}, lattice points in moduli spaces of curves~\cite{nor13, cha-do-mos19}, intersection theory on moduli spaces of curves~\cite{eyn-ora07, eyn14}, Gromov--Witten theory of $\mathbb{CP}^1$~\cite{nor-sco14, DOSS14}, Gromov--Witten theory of toric Calabi--Yau threefolds~\cite{BKMP09, eyn-ora15, fan-liu-zon20}, and cohomological field theories~\cite{DOSS14}. The topological recursion is also conjectured to govern certain quantum knot invariants, such as the large $N$ asymptotics of coloured Jones polynomials~\cite{dij-fuj-man11, bor-eyn15} and coloured HOMFLY-PT polynomials~\cite{GJKS15}.

In its original formulation due to Chekhov, Eynard and Orantin~\cite{che-eyn06, eyn-ora07}, the topological recursion takes as input a spectral curve $(\mathcal{C}, x, y, \omega_{0,2})$ comprising a compact Riemann surface $\mathcal{C}$ equipped with two meromorphic functions $x, y: \mathcal{C} \to \mathbb{CP}^1$ and a bidifferential $\omega_{0,2}$ with a double pole along the diagonal. These are required to satisfy mild technical assumptions, which we do not mention here. Indeed, rather than fully define the topological recursion, we refer the reader to the many existing expositions in the literature~\cite{do-dye-mat17, eyn-ora09}. For the present discussion, it suffices to note that the topological recursion uses the initial data to define the base cases $\omega_{0,1}(z_1) = y(z_1) \, \dd x(z_1)$ and $\omega_{0,2}(z_1, z_2)$, and then produces so-called correlation differentials via the following recursive formula.\footnote{The reader should note that the various appearances of the topological recursion formula in the literature differ subtly, particularly in terms of the expression for $\omega_{0,1}$, the definition of the recursion kernel $K(z_1, z)$, and the choice of sign for $y$. Ultimately, the first two differences are inconsequential and a change of sign for $y$ simply sends each correlation differential $\omega_{g,n}$ to $(-1)^n \omega_{g,n}$.}
\begin{align*}
\omega_{g,n}(z_1, \ldots, z_n) = \sum_{\alpha} \mathop{\mathrm{Res}}_{z = \alpha} K(z_1, z) \Bigg[ &\omega_{g-1,n+1}(z, \sigma_\alpha(z), z_2, \ldots, z_n) \\
&+ \sum_{\substack{g_1+g_2=g \\ I_1 \sqcup I_2 = \{2, \ldots, n\}}}^\circ \omega_{g_1,|I_1|+1}(z, z_{I_1}) \, \omega_{g_2,|I_2|+1}(\sigma_\alpha(z), z_{I_2}) \Bigg]
\end{align*}
In various settings, the correlation differential $\omega_{g,n}$ acts as a generating function for enumerative-geometric quantities, where $g$ represents the genus of a surface and $n$ its number of punctures, or boundaries.

It was previously shown that the monotone Hurwitz numbers are governed by the topological recursion in the following sense~\cite{do-dye-mat17}. Topological recursion on the spectral curve\footnote{The topological recursion is not sensitive to reparametrisation of the underlying plane curve, whose global description in this case is $xy^2 - y + 1 = 0$. We have expressed the spectral curve here using a different rational parametrisation to that appearing in~\cite{do-dye-mat17}, in order to align more closely with the statement and proof of \cref{thm:TR}.}
\[
\mathcal{C} = \mathbb{CP}^1, \quad x(z) = z(1 - z), \quad y(z) = \frac{1}{1 - z}, \quad \omega_{0,2}(z_1, z_2) = \frac{\dd z_1 \, \dd z_2}{(z_1 - z_2)^2}
\]
produces correlation differentials satisfying
\[
\omega_{g,n}(z_1, \ldots, z_n) = \dd_1 \cdots \dd_n \sum_{\mu_1, \ldots, \mu_n=1}^{\infty} \vec{H}_{g,n}(\mu_1, \ldots, \mu_n) \, x(z_1)^{\mu_1} \cdots x(z_n)^{\mu_n} + \delta_{g,0} \delta_{n,2} \, \frac{\dd x(z_1) \, \dd x(z_2)}{\left( x(z_1) - x(z_2) \right)^2}.
\]
Here, $\dd_i$ represents the exterior derivative in the $i$th coordinate and $\vec{H}_{g,n}(\mu_1, \ldots, \mu_n) = \big[ \HN{g}{n}{\mu_1, \ldots, \mu_n} \big]_{t=1}$ denotes a monotone Hurwitz number.

It is thus natural to consider whether the deformed monotone Hurwitz numbers satisfy the topological recursion on a deformation of the spectral curve above. Perhaps unsurprisingly, this is indeed the case and follows from the representation-theoretic interpretation for deformed monotone Hurwitz numbers of \cref{cor:characterformula} and a powerful result of Bychkov, Dunin-Barkowski, Kazarian and Shadrin~\cite{BDKS21}, which builds on previous work of Alexandrov, Chapuy, Eynard and Harnad~\cite{ACEH20}.

\begin{theorem} \label{thm:TR}
Topological recursion on the spectral curve
\[
\mathcal{C} = \mathbb{CP}^1, \quad x(z) = \frac{z(1 - z)}{1-z+tz}, \quad y(z) = \frac{1-z+tz}{1 - z}, \quad \omega_{0,2}(z_1, z_2) = \frac{\dd z_1 \, \dd z_2}{(z_1 - z_2)^2}
\]
produces correlation differentials satisfying
\[
\omega_{g,n}(z_1, \ldots, z_n) = \dd_1 \cdots \dd_n \sum_{\mu_1, \ldots, \mu_n=1}^{\infty} \HN{g}{n}{\mu_1, \ldots, \mu_n} \, x(z_1)^{\mu_1} \cdots x(z_n)^{\mu_n} + \delta_{g,0} \delta_{n,2} \, \frac{\dd x(z_1) \, \dd x(z_2)}{\left( x(z_1) - x(z_2) \right)^2}.
\]
\end{theorem}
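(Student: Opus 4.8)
The plan is to reduce the statement to the main theorem of Bychkov, Dunin-Barkowski, Kazarian and Shadrin~\cite{BDKS21}, which provides a topological recursion statement for a broad class of weighted Hurwitz numbers given by an arbitrary rational ``weight generating function''. The first step is to rewrite the character formula for disconnected deformed monotone Hurwitz numbers from \cref{prop:reptheory} in the standard weighted Hurwitz form. Concretely, one checks that
\[
\dHN{g}{n}{\mu_1, \ldots, \mu_n} = \frac{1}{\mu_1 \cdots \mu_n} \big[ \h^{|\mu|+2g-2+n} \big] \sum_{\lambda \vdash |\mu|} \frac{\chi^\lambda_{1^{|\mu|}} \, \chi^\lambda_\mu}{|\mu|!} \prod_{\square \in \lambda} \psi\big(\h \, c(\square)\big),
\qquad
\psi(u) = \frac{1 - u + tu}{1 - u} = 1 + t \sum_{k \geq 1} u^k \cdot \frac{1}{1} \text{-type series},
\]
so that the weight per box content $c$ is a product of factors $\frac{1+(t-1)\h c}{1-\h c}$, exactly the shape of a rational weighted Hurwitz enumeration with weight generating function $G(u) = \frac{1+(t-1)u}{1-u}$ (equivalently, the ``content product'' formulation of~\cite{ACEH20}). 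At $t=1$ this reduces to $G(u) = \frac{1}{1-u}$, which is the monotone Hurwitz case treated in~\cite{do-dye-mat17}, giving a useful consistency check.

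Next I would invoke~\cite{BDKS21} to read off the spectral curve directly from $G$. Their result expresses $x(z)$ and $y(z)$ in terms of $G$ via explicit formulas: roughly, with $\psi$ playing the role of the ``disconnected'' generating series, one has $y(z)$ built from $G$ evaluated at the natural variable and $x(z)$ determined by the requirement that $\omega_{0,1} = y\,\dd x$ reproduce the one-point, genus-zero data $\HN{0}{1}{\mu}$. Here I would use \cref{eq:hurwitznarayana}: the generating function $\sum_\mu (\mu+1)\HN{0}{1}{\mu+1} x^\mu = \sum_\mu \mathrm{Nar}_\mu(t) x^\mu$ is a known algebraic function, and inverting the relation $x = x(z)$ from the claimed curve $x(z) = \frac{z(1-z)}{1-z+tz}$ should recover precisely this Narayana generating series as $\sum_\mu \HN{0}{1}{\mu} x^\mu$ (up to the derivative normalisation $\omega_{g,n} = \dd_1\cdots\dd_n \sum \HN{g}{n}{\vec\mu}\prod x(z_i)^{\mu_i}$). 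Equivalently, one verifies that the plane curve $xy^2 + (t-1)xy - y + 1 = 0$ from \cref{thmx:recursions} admits the stated rational parametrisation and that $y(z) = \frac{1-z+tz}{1-z}$ is the correct branch; this is a routine algebraic manipulation. The bidifferential $\omega_{0,2} = \frac{\dd z_1 \dd z_2}{(z_1-z_2)^2}$ is the canonical one on $\mathbb{CP}^1$, matching the genus-zero standing assumption in~\cite{BDKS21}.

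The remaining work is bookkeeping: one must confirm that the normalisations, signs, and the ``$x^\mu$ versus $x^{\mu}$-after-derivative'' conventions in~\cite{BDKS21} match those in the statement of \cref{thm:TR}, including the $\delta_{g,0}\delta_{n,2}$ correction term, which is the usual discrepancy between $\omega_{0,2}$ and the naive generating-function expression. I would also note that topological recursion is insensitive to the rational reparametrisation, so any convenient choice of uniformiser $z$ is permissible, and I would record the change of variables explicitly so that formulas like \cref{eq:w11} in the proof of \cref{prop:genus01} can be extracted.

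The main obstacle is purely one of matching conventions: the results of~\cite{BDKS21} and~\cite{ACEH20} are stated with particular choices of variables (often the ``$x$'' there is related to our $x$ by an inversion or a sign) and particular normalisations of the weight generating function, and one must carefully track these through to be certain the spectral curve one obtains is \emph{exactly} $(x(z), y(z)) = \big(\frac{z(1-z)}{1-z+tz}, \frac{1-z+tz}{1-z}\big)$ and not some reparametrised or reflected variant. Once the dictionary is pinned down — in particular once the genus-zero one-point check against the Narayana polynomials~\eqref{eq:hurwitznarayana} goes through — the theorem follows immediately.
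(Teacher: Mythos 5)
Your proposal follows essentially the same route as the paper: use the character formula of \cref{prop:reptheory} to identify the (disconnected) deformed monotone Hurwitz numbers as hypergeometric-type weighted Hurwitz numbers with rational weight $f(u)=\frac{1-u+tu}{1-u}$, then invoke the theorem of Bychkov--Dunin-Barkowski--Kazarian--Shadrin, whose explicit formulas $x(z)=z/f(\tilde{y}(z))$ and $y(z)=\tilde{y}(z)/x(z)$ with $\tilde{y}(z)=z$ yield exactly the stated spectral curve. The paper's remaining work is just the connected/disconnected passage and the $|\mathrm{Aut}(\mu)|$ bookkeeping, so your additional Narayana consistency check is harmless but not needed.
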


\begin{proof}
The main theorem of Bychkov, Dunin-Barkowski, Kazarian and Shadrin in~\cite{BDKS21} states that the topological recursion governs the coefficients of certain KP tau functions of hypergeometric type. We present here a simplified version of their result that is fit for purpose.

Let $\tilde{y}(z) = \sum_{i=1}^\infty y_i \, z^i$ be the series expansion of a rational function satisfying $\tilde{y}(0) = 0$ and let $f(z)$ be a rational function satisfying $f(0) = 1$. From this data, define the generating function
\[
Z(p_1, p_2, \ldots; \h) = \sum_{\lambda \in \mathcal{P}} s_\lambda(p_1, p_2, \ldots) \, s_\lambda \Big( \frac{y_1}{\h}, \frac{y_2}{\h}, \ldots \Big) \prod_{\square \in \lambda} f(\h \, c(\square)),
\]
where $\mathcal{P}$ is the set of partitions, $s_\lambda$ represents the Schur function expressed in terms of power sum symmetric functions, and $c(\square)$ denotes the content of the box $\square$ in the Young diagram of a partition. This is a KP tau function of hypergeometric type and there exist ``weighted Hurwitz numbers'' $X_{g,n}(\mu_1, \ldots, \mu_n)$ for $g \geq 0$, $n \geq 1$ and $\mu_1, \ldots, \mu_n \geq 1$ such that
\begin{equation} \label{eq:partitionexpansion}
Z(p_1, p_2, \ldots; \h) = \exp \bigg( \sum_{g=0}^\infty \sum_{n=1}^\infty \frac{\h^{2g-2+n}}{n!} \sum_{\mu_1, \ldots, \mu_n=1}^\infty X_{g,n}(\mu_1, \ldots, \mu_n) \, p_{\mu_1} \cdots p_{\mu_n} \bigg).
\end{equation}

Bychkov, Dunin-Barkowski, Kazarian and Shadrin~\cite{BDKS21} prove that topological recursion on the spectral curve
\[
\mathcal{C} = \mathbb{CP}^1, \quad x(z) = \frac{z}{f(\tilde{y}(z))}, \quad y(z) = \frac{\tilde{y}(z)}{x(z)}, \quad \omega_{0,2}(z_1, z_2) = \frac{\dd z_1 \, \dd z_2}{(z_1 - z_2)^2}
\]
produces correlation differentials satisfying
\[
\omega_{g,n}(z_1, \ldots, z_n) = \dd_1 \cdots \dd_n \sum_{\mu_1, \ldots, \mu_n=1}^{\infty} X_{g,n}(\mu_1, \ldots, \mu_n) \, x(z_1)^{\mu_1} \cdots x(z_n)^{\mu_n} + \delta_{g,0} \delta_{n,2} \, \frac{\dd x(z_1) \, \dd x(z_2)}{\left( x(z_1) - x(z_2) \right)^2}.
\]

We simply specialise this result to $\tilde{y}(z) = z$ and $f(z) = \frac{1-z+tz}{1-z}$. It remains to check that the monotone deformed Hurwitz numbers agree with the weighted Hurwitz numbers with this particular choice of data.

The exponential of \cref{eq:partitionexpansion} transforms the connected generating function to the disconnected generating function, so it suffices to show that the disconnected deformed monotone Hurwitz numbers satisfy
\[
\dHN{g}{n}{\mu_1, \ldots, \mu_n} = |\mathrm{Aut}(\mu)| \, \big[ \h^{2g-2+n} p_{\mu_1} \cdots p_{\mu_n} \big] Z(p_1, p_2, \ldots; \h).
\]
Here, $\mathrm{Aut}(\mu)$ denotes the subgroup of permutations in $S_n$ that fix $(\mu_1, \ldots, \mu_n)$. This automorphism factor arises because the summation in \cref{eq:partitionexpansion} is over all tuples of positive integers, rather than partitions.

Now use $s_\lambda(p_1, p_2, \ldots) = \displaystyle\sum_{\mu \vdash |\lambda|} \frac{\chi^\lambda_\mu}{|\mathrm{Aut}(\mu)|} \prod \frac{p_{\mu_i}}{\mu_i}$ in the expression
\[
Z(p_1, p_2, \ldots; \h) = \sum_{\lambda \in \mathcal{P}} s_\lambda(p_1, p_2, \ldots) \, s_\lambda ( \tfrac{1}{\h}, 0, 0, \ldots ) \prod_{\square \in \lambda} \frac{1 - \h \, c(\square) + t \h \, c(\square)}{1 - \h \, c(\square)}.
\]
to obtain
\begin{multline*}
|\mathrm{Aut}(\mu)| \, \big[ \h^{2g-2+n} p_{\mu_1} \cdots p_{\mu_n} \big] Z(p_1, p_2, \ldots; \h) \\
= \frac{1}{\mu_1 \cdots \mu_n} \, \big[ \h^{|\mu|+2g-2+n} \big] \sum_{\lambda \vdash |\mu|} \frac{\chi^\lambda_{11 \cdots 1} \, \chi^\lambda_\mu}{|\mu|!} \prod_{\square \in \lambda} \frac{1 - \h \, c(\square) + t \h \, c(\square)}{1 - \h \, c(\square)}.
\end{multline*}
That this is equal to $\dHN{g}{n}{\mu_1, \ldots, \mu_n}$ is precisely the content of \cref{prop:reptheory}, and this concludes the proof.
\end{proof}

\begin{remark}
The meromorphic functions $x, y: \mathbb{CP}^1 \to \mathbb{CP}^1$ of \cref{thm:TR} satisfy
\[
xy^2 + (t-1)xy - y + 1 = 0,
\]
which is the global form of the spectral curve. The general theory of topological recursion asserts that, for $(g,n) \neq (0,1)$ or $(0,2)$, the correlation differential $\omega_{g,n}$ is a rational multidifferential on this curve with poles only at the zeroes of $\dd x$ and further conditions on the pole structure~\cite{eyn-ora07}. This in turn leads to a polynomiality structure theorem for the deformed monotone Hurwitz numbers. Furthermore, the relation between topological recursion and cohomological field theories should lead to an interpretation of deformed monotone Hurwitz numbers as intersection numbers on moduli spaces of curves~\cite{eyn14, DOSS14}. However, we do not pursue this investigation further in the present work.
\end{remark}

\subsection{Topologising sequences of polynomials}

\subsubsection*{From Catalan curves to Narayana curves}

As previously mentioned in \cref{eq:hurwitznarayana}, the $(g,n) = (0,1)$ case of the deformed monotone Hurwitz numbers recovers the Narayana polynomials in the following way.
\[
(\mu+1) \, \HN{0}{1}{\mu+1} = \mathrm{Nar}_{\mu}(t) := \sum_{i=1}^{\mu} \frac{1}{\mu} \binom{\mu}{i} \binom{\mu}{i-1} \, t^i
\]
Hence, we consider the deformed monotone Hurwitz numbers to be a ``topological generalisation'' of the Narayana polynomials. This viewpoint appears throughout the literature, such as in Dumitrescu and Mulase's exposition on generalised Catalan numbers~\cite{dum-mul18}.

We propose that the topological recursion can be used as a mechanism to ``topologise'' sequences of polynomials. One would apply topological recursion to a spectral curve with a deformation parameter~$t$ to obtain a family of polynomials $X_{g,n}^t(\mu_1, \ldots, \mu_n)$ for $g \geq 0$, $n \geq 1$ and $\mu_1, \ldots, \mu_n \geq 1$ such that $X^t_{0,1}(1), X^t_{0,1}(2), \ldots$ recovers the original sequence of polynomials in $t$. Furthermore, we claim that interesting properties of the original sequence of polynomials can persist into the topological generalisation. Examples of such properties are the symmetry and unimodality of coefficients as well as the interlacing of roots. These both hold for Narayana polynomials and are respectively proven (\cref{prop:symmetry}) and conjectured (\cref{con:interlacing}) to hold for deformed monotone Hurwitz numbers more generally.

There is more than one way to topologise a given sequence of polynomials. In order to obtain a topological generalisation of the Narayana numbers, it is natural to consider spectral curves that store Catalan numbers in the correlation differential $\omega_{0,1}$ and to consider a suitable $t$-deformation. The literature suggests three natural candidates for such ``Catalan spectral curves'' and these curves govern monotone Hurwitz numbers, map enumeration, and dessin d'enfant enumeration. This leads to the following three ``Narayana spectral curves'' with $\mathcal{C} = \mathbb{CP}^1$, $\omega_{0,2} = \frac{\dd z_1 \, \dd z_2}{(z_1 - z_2)^2}$, and the meromorphic functions $x, y: \mathcal{C} \to \mathbb{CP}^1$ satisfying the following.
\begin{itemize}
\item {\em Monotone Hurwitz numbers.} We have
\[
y(x) = \sum_{\mu=0}^\infty \mathrm{Cat}_\mu \, x^\mu \quad \Rightarrow \quad x y^2 - y + 1 = 0,
\]
which is the global form of the spectral curve for monotone Hurwitz numbers~\cite{do-dye-mat17}. Promoting the Catalan numbers to the Narayana polynomials, one obtains
\[
y(x) = \sum_{\mu=0}^\infty \mathrm{Nar}_\mu(t) \, x^\mu \quad \Rightarrow \quad xy^2 + (t-1)xy - y + 1 = 0.
\]
This is the global form of the spectral curve for deformed monotone Hurwitz numbers, which appeared above in \cref{thm:TR}.

\item {\em Map enumeration.} We have
\[
y(x) = \sum_{\mu=0}^\infty \mathrm{Cat}_\mu \, x^{-2\mu-1} \quad \Rightarrow \quad y^2 - xy + 1 = 0,
\]
which is the global form of the spectral curve for the enumeration of maps~\cite{che-eyn06, eyn-ora07}. Promoting the Catalan numbers to the Narayana polynomials, one obtains
\[
y(x) = \sum_{\mu=0}^\infty \mathrm{Nar}_\mu(t) \, x^{-2\mu-1} \quad \Rightarrow \quad xy^2 - x^2y + (t-1) y + x = 0.
\]
To the best of our knowledge, this is the global form of a spectral curve that has not yet been studied in the literature. We do not investigate it further in the present work, largely due to the fact that it has genus 1 for generic values of $t$. The topological recursion on spectral curves of positive genus is much less straightforward than on those of genus 0. Nevertheless, it would be interesting to understand this spectral curve and whether it governs a natural enumerative problem.

\item {\em Dessin d'enfant enumeration.} We have
\[
y(x) = \sum_{\mu=0}^\infty \mathrm{Cat}_\mu \, x^{-\mu-1} \quad \Rightarrow \quad x y^2 - x y + 1 = 0,
\]
which is the global form of the spectral curve for the enumeration of dessins d'enfant~\cite{do-nor18, kaz-zog15}. Promoting the Catalan numbers to the Narayana polynomials, one obtains
\[
y(x) = \sum_{\mu=0}^\infty \mathrm{Nar}_\mu(t) \, x^{-\mu-1} \quad \Rightarrow \quad x y^2 - x y + (t-1)y + 1 = 0.
\]
This is the global form of a spectral curve with genus 0. It leads to a topological generalisation of the Narayana polynomials that differs from the deformed monotone Hurwitz numbers. We will consider its properties in the next section.
\end{itemize}

\begin{remark}
We certainly do not claim that the spectral curves above provide the only natural topological generalisations of the Catalan numbers or the Narayana polynomials. Indeed, one could study spectral curves arising from $y(x) = \sum_{\mu=0}^\infty \mathrm{Cat}_\mu \, x^{a\mu+b}$ or $y(x) = \sum_{\mu=0}^\infty \mathrm{Nar}_\mu(t) \, x^{a\mu+b}$ for judicious choices of $a$ and~$b$, and it is not unlikely that these would lead to interesting enumerative problems.
\end{remark}

\subsubsection*{Deforming the dessin d'enfant enumeration}

The previous discussion motivates the study of the spectral curve whose global form is
$x y^2 - x y + (t-1)y + 1 = 0$. In parametrised form, this corresponds to the spectral curve
\[
\mathcal{C} = \mathbb{CP}^1, \quad x(z) = \frac{1-z+tz}{z(1-z)}, \quad y(z) = z, \quad \omega_{0,2}(z_1, z_2) = \frac{\dd z_1 \, \dd z_2}{(z_1 - z_2)^2}.
\]
By construction, this spectral curve should govern a $t$-deformation of the usual dessin d'enfant enumeration. In the following, we prove that this enumeration arises by attaching a multiplicative weight $t$ to each black vertex of a dessin d'enfant. Moreover, we observe that the polynomials arising from this construction empirically satisfy real-rootedness and interlacing properties analogous to those observed for deformed monotone Hurwitz numbers in \cref{con:realrooted,con:interlacing}. Although the weighted enumeration of dessins d'enfant has been studied previously~\cite{kaz-zog15}, these real-rootedness and interlacing properties have not been observed in the literature, to the best of our knowledge. Our discussion of the weighted enumeration of dessins d'enfant should be considered as a further case study promoting the viewpoint that topological recursion produces interesting topological generalisations of sequences of polynomials.

First, let us define dessins d'enfant --- in other words, bicoloured maps --- and their enumeration.

\begin{definition} A {\em map} is a finite graph --- possibly with loops or multiple edges --- embedded in a compact oriented surface such that the complement of the graph is a disjoint union of topological disks. We refer to these disks as {\em faces} and require that they are labelled $1, 2, 3, \ldots, n$, where $n$ denotes the number of faces.

A {\em dessin d'enfant} is a map whose vertices have been coloured black or white such that each edge is incident to one black vertex and one white vertex. The {\em degree} of a face is defined to be half the number of edges incident to it.

An {\em equivalence} between two maps (or dessins d'enfant) is a bijection between their respective vertices, edges and faces that is realised by an orientation-preserving homeomorphism of their underlying surfaces and preserves all adjacencies, incidences, labels (and colours). An {\em automorphism} of a map (or dessin d'enfant) is an equivalence from the map (or dessin d'enfant) to itself.
\end{definition}

\begin{definition} \label{def:dessincount}
For $g \geq 0$, $n \geq 1$ and $\mu_1, \ldots, \mu_n \geq 1$, let $D^t_{g,n}(\mu_1, \ldots, \mu_n) \in \mathbb{Q}[t]$ denote the weighted enumeration of connected genus $g$ dessins d'enfant with $n$ labelled faces of degrees $\mu_1, \ldots, \mu_n$. The weight attached to a dessin d'enfant $\Gamma$ is $\frac{t^{b(\Gamma)}}{|\mathrm{Aut}(\Gamma)|}$, where $b(\Gamma)$ denotes the number of black vertices of $\Gamma$ and $\mathrm{Aut}(\Gamma)$ denotes the automorphism group of $\Gamma$. Let $D^{t\bullet}_{g,n}(\mu_1, \ldots, \mu_n) \in \mathbb{Q}[t]$ denote the analogous count, including possibly disconnected dessins d'enfant.
\end{definition}

\begin{example}
Consider the dessin d'enfant enumeration $D^t_{0,2}(2,2)$. The only dessins d'enfant that contribute are the five pictured below. We have represented these as plane graphs, with the face labelled 1 corresponding to the bounded region and the face labelled 2 corresponding to the unbounded region.

\begin{center}
\begin{tikzpicture} [line width = 0.6pt]
\def\x{0.6}
\def\y{0.5}
\def\r{0.08}
\begin{scope}
	\draw (0,0) circle [x radius = \x, y radius = \y];
	\draw (0,0) -- (2*\x,0);
	\filldraw[fill=black] (-\x,0) circle (\r);
	\filldraw[fill=black] (0,0) circle (\r);
	\filldraw[fill=white] (\x,0) circle (\r);
	\filldraw[fill=black] (2*\x,0) circle (\r);
\end{scope}
\begin{scope} [xshift = 80]
	\draw (0,0) circle [x radius = \x, y radius = \y];
	\draw (-\x,0) -- (0,0);
	\draw (\x,0) -- (2*\x,0);
	\filldraw[fill=white] (-\x,0) circle (\r);
	\filldraw[fill=black] (0,0) circle (\r);
	\filldraw[fill=black] (\x,0) circle (\r);
	\filldraw[fill=white] (2*\x,0) circle (\r);
\end{scope}
\begin{scope} [xshift = 160]
	\draw (0,0) circle [x radius = \x, y radius = \y];
	\filldraw[fill=white] (-\x,0) circle (\r);
	\filldraw[fill=white] (\x,0) circle (\r);
	\filldraw[fill=black] (0,-\y) circle (\r);
	\filldraw[fill=black] (0,\y) circle (\r);
\end{scope}
\begin{scope} [xshift = 240]
	\draw (0,0) circle [x radius = \x, y radius = \y];
	\draw (-\x,0) -- (0,0);
	\draw (\x,0) -- (2*\x,0);
	\filldraw[fill=black] (-\x,0) circle (\r);
	\filldraw[fill=white] (0,0) circle (\r);
	\filldraw[fill=white] (\x,0) circle (\r);
	\filldraw[fill=black] (2*\x,0) circle (\r);
\end{scope}
\begin{scope} [xshift = 320]
	\draw (0,0) circle [x radius = \x, y radius = \y];
	\draw (0,0) -- (2*\x,0);
	\filldraw[fill=white] (-\x,0) circle (\r);
	\filldraw[fill=white] (0,0) circle (\r);
	\filldraw[fill=black] (\x,0) circle (\r);
	\filldraw[fill=white] (2*\x,0) circle (\r);
\end{scope}
\end{tikzpicture}
\end{center}

The central dessin d'enfant has two automorphisms, while the remaining dessins d'enfant have one, so we have $D^t_{0,2}(2,2) = \big( \frac{t^3}{1} \big) + \big( \frac{t^2}{1} + \frac{t^2}{2} + \frac{t^2}{1} \big) + \big( \frac{t^1}{1} \big) = t^3 + \frac{5}{2} t^2 + t$.
\end{example}

\begin{proposition}
If $|\mu| < 2g + n$, then $D_{g,n}^t(\mu_1, \ldots, \mu_n) = 0$. If $|\mu| \geq 2g + n$, then $D_{g,n}^t(\mu_1, \ldots, \mu_n) \in \mathbb{Q}[t]$ is a polynomial of degree $|\mu| + 1 - 2g - n$ whose coefficients are symmetric.
\end{proposition}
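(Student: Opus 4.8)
The plan is to combine an Euler-characteristic count, the colour-swapping involution on dessins d'enfant, and a realisability statement for the extremal dessins.

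First I would set up the numerology. For a connected genus-$g$ dessin $\Gamma$ with $n$ faces of degrees $\mu_1, \ldots, \mu_n$, write $b$ and $w$ for its numbers of black and white vertices. Each edge contributes two edge-sides, and the sides incident to face $i$ number $2\mu_i$, so the number of edges is $E = |\mu|$. Euler's formula for the cell embedding gives $(b+w) - |\mu| + n = 2 - 2g$, hence
\[
b + w = |\mu| + 2 - 2g - n =: N.
\]
Since $|\mu| \geq n \geq 1$ there is at least one edge, so $b \geq 1$ and $w \geq 1$; thus $N \geq 2$, i.e. $|\mu| \geq 2g + n$. When $|\mu| < 2g + n$ no such dessin exists and $D^t_{g,n}(\mu_1, \ldots, \mu_n) = 0$.

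Next I would exploit the involution $\Gamma \mapsto \overline{\Gamma}$ that exchanges the two vertex colours. It preserves the underlying map, the labelled faces and their degrees, the genus, and the automorphism group, while replacing $b(\Gamma)$ by $w(\Gamma) = N - b(\Gamma)$. Summing the weights $t^{b(\Gamma)}/|\mathrm{Aut}(\Gamma)|$ over this involution yields the functional equation
\[
D^t_{g,n}(\mu_1, \ldots, \mu_n) = t^{N} \, D^{1/t}_{g,n}(\mu_1, \ldots, \mu_n).
\]
Consequently the coefficient sequence of $D^t_{g,n}$ is palindromic, so the polynomial is symmetric, and its top and bottom exponents $\max_\Gamma b(\Gamma)$ and $\min_\Gamma b(\Gamma)$ add up to $N$. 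As $\min_\Gamma b(\Gamma) \geq 1$, this already gives $\deg D^t_{g,n} \leq N - 1 = |\mu| + 1 - 2g - n$, with equality exactly when some admissible dessin has a single black vertex (equivalently, by the involution, a single white vertex).

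It remains to produce such an extremal dessin whenever $|\mu| \geq 2g + n$. Encoding dessins with $|\mu|$ edges as transitive triples $(\sigma_0, \sigma_1, \sigma_\infty)$ in $S_{|\mu|}$ with $\sigma_0 \sigma_1 \sigma_\infty = \mathrm{id}$ — the cycles of $\sigma_0, \sigma_1, \sigma_\infty$ being the black vertices, white vertices and faces, with cycle lengths recording degrees — a one-black-vertex dessin amounts to choosing $\sigma_0$ to be a single $|\mu|$-cycle and $\sigma_\infty$ of cycle type $\mu$, with $\sigma_1 = \sigma_0^{-1}\sigma_\infty^{-1}$; transitivity is automatic, and Euler's formula forces $\sigma_1$ to have $|\mu| + 1 - n - 2g$ cycles. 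So the claim reduces to: for a fixed $|\mu|$-cycle $c$ and a permutation $\pi$ of cycle type $\mu$, the product $c\pi$ can have exactly $|\mu| + 1 - n - 2g$ cycles. This is a classical genus-realisation fact for factorisations of a long cycle — the possible cycle numbers of $c\pi$ fill a full interval of the appropriate parity whose maximum is the genus-zero value $|\mu| + 1 - n$, and $|\mu| + 1 - n - 2g \geq 1$ by our hypothesis, so it is attained. Concretely, genus zero is realised by taking the cycles of $\sigma_\infty$ to be the consecutive blocks $\{1, \ldots, \mu_1\}, \{\mu_1 + 1, \ldots, \mu_1 + \mu_2\}, \ldots$, and the intermediate genera follow by a standard handle-attaching move, or from the well-known positivity of the relevant Hurwitz-type counts. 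I expect this realisability step to be the only real obstacle: the Euler count and the colour-swap involution are essentially bookkeeping, whereas pinning down the exact degree requires knowing that the maximal black-vertex count $N - 1$ is actually achieved for every admissible $(g, n, \mu)$.
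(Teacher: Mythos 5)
Your argument has the same skeleton as the paper's: the Euler-characteristic count $b+w=|\mu|+2-2g-n$ gives the vanishing statement, the colour-swapping involution gives the palindromic symmetry, and everything then hinges on exhibiting one extremal dessin (a single white vertex, or equivalently by the involution a single black vertex) for every admissible $(g,n,\mu)$. Where you diverge is precisely at that last step. The paper settles it by a short explicit construction: identify opposite sides of a $(4g+2)$-gon with alternately coloured vertices to get a genus~$g$ dessin with one black and one white vertex and a single face of degree $2g+1$, then add $n-1$ black--white edges to split faces and degree-one black vertices at the white vertex to reach arbitrary admissible $\mu$. You instead translate the question into permutation triples and invoke a realisability statement for products of a long cycle with a permutation of cycle type $\mu$. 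That statement is true, and your genus-zero base case (consecutive blocks, i.e.\ the non-crossing factorisation) is fine, so your route does work if you cite the interval property for the cycle count of $c\pi$ properly. But as written the justification of the intermediate genera is the weak point: the ``standard handle-attaching move'' does not obviously preserve the one-black-vertex condition, since merging two white cycles of $\sigma_1$ by a transposition $\tau$ replaces $\sigma_0$ by $\sigma_0\tau$, and multiplying a full cycle by a transposition always splits it into two cycles; and ``positivity of the relevant Hurwitz-type counts'' is exactly the fact to be proved, not a reason for it. The cleanest repair is either a genuine citation of the classical result that the cycle counts of $c\pi$ fill the full interval of the correct parity, or simply the paper's polygon construction (with colours swapped), which proves the needed existence in a few lines and avoids the appeal altogether. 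In short: correct structure, same first two steps, and a defensible but under-justified substitute for the paper's explicit extremal construction.
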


\begin{proof}
Consider a dessin d'enfant with genus $g$ and $n$ faces with degrees $\mu_1, \ldots, \mu_n$. By an Euler characteristic calculation, the number of vertices is $|\mu| + 2 - 2g - n$. Since there must be at least one black vertex and at least one white vertex, we must have have $|\mu| \geq 2g + n$.

We will show that, under the assumption $|\mu| \geq 2g + n$, there exists a dessin d'enfant with exactly one white vertex. Take a polygon with $4g+2$ sides, whose vertices are alternately coloured black and white. By identifying opposite edges, one obtains a dessin d'enfant on a genus $g$ surface, with one black vertex, one white vertex, and one face with degree $2g+1$. By adding $n-1$ appropriate edges from the black vertex to the white vertex, one can obtain a genus $g$ dessin d'enfant with one black vertex, one white vertex, and $n$~faces with any degrees $\mu_1, \ldots, \mu_n$ satisfying $|\mu| = 2g + n$. We can relax this condition to $|\mu| \geq 2g + n$ by adding appropriate black vertices with degree 1 that are adjacent to the unique white vertex. Such a dessin d'enfant contributes to degree $|\mu| + 1 - 2g - n$ in $D_{g,n}^t(\mu_1, \ldots, \mu_n)$, and there can be no contributions in higher degree.

Finally, observe that switching the colours of the vertices of a dessin d'enfant changes the number of black vertices from $b$ to $(|\mu| + 2 - 2g - n) - b$, leading to the desired symmetry in the coefficients of $D_{g,n}^t(\mu_1, \ldots, \mu_n)$.
\end{proof}

There is an analogue of the cut-and-join recursion for deformed monotone Hurwitz numbers of \cref{thm:cutjoin} in the context of the dessin d'enfant enumeration. We omit the proof, since it can be found in the literature~\cite{kaz-zog15}.

\begin{proposition} [Cut-and-join recursion] \label{prop:dessincutjoin}
Apart from the initial condition $D^t_{0,1}(1) = 1$, the dessin d'enfant enumeration satisfies
\begin{align*}
\mu_1 D^t_{g,n}(\mu_1, \mu_S) &= \sum_{i=2}^n (\mu_1+\mu_i-1) \, D^t_{g,n-1}(\mu_1+\mu_i-1, \mu_{S \setminus \{i\}}) + (t+1) (\mu_1-1) \, D^t_{g,n}(\mu_1-1, \mu_S) \\
&+ \sum_{\alpha + \beta = \mu_1-1} \alpha \beta \Bigg[ D^t_{g-1,n+1}(\alpha, \beta, \mu_S) + \sum_{\substack{g_1+g_2=g \\ I_1 \sqcup I_2 = S}} D^t_{g_1,|I_1|+1}(\alpha, \mu_{I_1}) \, D^t_{g_2,|I_2|+1}(\beta, \mu_{I_2}) \Bigg],
\end{align*}
where we use the notation $S = \{2, 3, \ldots, n\}$ and $\mu_I = \{\mu_{i_1}, \mu_{i_2}, \ldots, \mu_{i_k}\}$ for $I = \{i_1, i_2, \ldots, i_k\}$.
\end{proposition}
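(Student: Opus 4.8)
\emph{Proof proposal.} The plan is to recover this identity by the classical ``root-edge deletion'' analysis for maps, in the spirit of Tutte and as carried out for Grothendieck's dessin counting by Kazarian and Zograf~\cite{kaz-zog15}; the argument runs parallel to the combinatorial proof of \cref{thm:cutjoin}. Marking one of the $\mu_1$ black corners of face~$1$ rigidifies a dessin (a rooted map has no non-trivial automorphism), so the left-hand side $\mu_1 D^t_{g,n}(\mu_1, \mu_S)$ equals the sum of $t^{b(\Gamma)}$ over \emph{rooted} connected genus~$g$ dessins with $n$ labelled faces of degrees $\mu_1, \ldots, \mu_n$, the root being a black corner of face~$1$ and the ``root edge'' $e$ the edge immediately following it in cyclic order. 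One deletes $e$ and sorts the rooted dessins into cases by the local picture at $e$, checking in each case that the operation changes $t^{b(\cdot)}$ only by an explicit factor and is a bijection onto all rooted dessins of the appropriate target type.

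If the face on the other side of $e$ is some $i \neq 1$, then both endpoints of $e$ have degree at least $2$, so deleting $e$ removes no vertex, keeps the genus and the black count, and merges faces $1$ and $i$ into a face of degree $\mu_1 + \mu_i - 1$; recording the new root and where the old edge lay gives a bijection onto rooted dessins counted by $(\mu_1 + \mu_i - 1)\,D^t_{g,n-1}(\mu_1 + \mu_i - 1, \mu_{S \setminus \{i\}})$, and summing over $i$ yields the first term. If $e$ is pendant --- necessarily with face~$1$ on both sides --- deleting $e$ together with its degree-$1$ endpoint drops the degree of face~$1$ by one and alters nothing else, except that the black count drops by one exactly when that endpoint is black; after arranging that the root is carried to the surviving endpoint rather than destroyed, the black and white sub-cases combine into $(t+1)(\mu_1 - 1)\,D^t_{g,n}(\mu_1 - 1, \mu_S)$, the factor $t$ recording the lost black vertex and the factor $1$ the case it survives.

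In the remaining case $e$ has face~$1$ on both sides but is not pendant, so the two passages through $e$ cut the boundary walk of face~$1$ into two closed subwalks of positive degrees $\alpha, \beta$ with $\alpha + \beta = \mu_1 - 1$; deleting $e$ removes no vertex (hence preserves the black count) and splits face~$1$ into faces of degrees $\alpha$ and $\beta$. An Euler-characteristic count gives a dichotomy: either the new map stays connected, and then its genus is $g-1$ and it has $n+1$ faces, or it breaks into two components whose genera sum to $g$ and whose remaining faces partition $S$, with the two new faces landing one in each component. Marking a black corner on each new face (the $\alpha\beta$ factor) and recording where $e$ was gives bijections onto the rooted dessins counted by $\sum_{\alpha+\beta=\mu_1-1}\alpha\beta\,D^t_{g-1,n+1}(\alpha,\beta,\mu_S)$ and by $\sum_{\alpha+\beta=\mu_1-1}\alpha\beta\sum_{g_1+g_2=g,\, I_1\sqcup I_2=S} D^t_{g_1,|I_1|+1}(\alpha,\mu_{I_1})\,D^t_{g_2,|I_2|+1}(\beta,\mu_{I_2})$. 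Collecting the cases and dividing by $\mu_1$ gives the recursion, with the single-edge dessin as base case.

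The main obstacle is bookkeeping rather than ideas. One must fix a notion of root --- perhaps a marked edge-side rather than a corner --- that makes every deletion above invertible, in particular so the root is never lost in the pendant case; one must check that each inverse operation is a genuine bijection onto \emph{all} rooted dessins of the target type, absorbing the normalisation $1/|\mathrm{Aut}(\Gamma)|$, which passage to rooted objects linearises; and one must justify the connected-versus-disconnected dichotomy, i.e.\ that deleting an edge incident to a single face on both sides either cuts a handle or separates the surface. A shorter but less transparent route, effectively the one in~\cite{kaz-zog15}, is to observe that the disconnected generating series of the $D^{t\bullet}_{g,n}(\mu_1,\ldots,\mu_n)$ is a hypergeometric KP tau function --- it admits a character-sum expression analogous to that of \cref{prop:reptheory}, with a content-product weight encoding the dessin spectral curve of \cref{thm:TR} --- and then to apply the associated cut-and-join (Virasoro) operator to the tau function, which reproduces the recursion at once.
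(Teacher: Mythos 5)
The paper offers no proof of \cref{prop:dessincutjoin} at all --- it explicitly omits the argument and cites Kazarian--Zograf~\cite{kaz-zog15} --- so your root-edge-deletion argument is necessarily a different, self-contained route (your closing ``shorter but less transparent'' tau-function/Virasoro alternative is essentially the cited one). The combinatorial skeleton you give is correct: rooting at one of the $\mu_1$ black corners of face~1 linearises the $1/|\mathrm{Aut}|$ factor (automorphisms of a connected map act freely on corners), and the three cases for the root edge --- other side a face $i\neq 1$, pendant, non-pendant with face~1 on both sides --- produce exactly the three families of terms, with the degree count $2\mu_1+2\mu_i-2$ versus $2\alpha+2\beta=2\mu_1-2$, the Euler-characteristic dichotomy $g\mapsto g-1$ or $g_1+g_2=g$, and the black-vertex weight behaving as you say; the resulting recursion checks against the paper's data tables. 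One small point worth recording: the single-edge base case gives $D^t_{0,1}(1)=t$ (one black vertex), which is what the recursion actually requires and what the data table lists, notwithstanding the proposition's stated initial condition.

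Your flagged ``bookkeeping'' items do need the following specific resolutions, none of which is a genuine obstruction. In the pendant case with a black leaf, the root corner is the unique corner at the deleted black vertex and the surviving endpoint of $e$ is \emph{white}, so the root cannot be ``carried to the surviving endpoint''; instead transport it to the black corner immediately following the merged white corner at that white vertex, and invert by attaching a black leaf inside the white corner preceding the root --- this yields the clean $t\,(\mu_1-1)\,D^t_{g,n}(\mu_1-1,\mu_S)$ contribution, and the white-leaf case gives the companion factor $1$. In the splitting case, the reinserted edge must join a black vertex to a white vertex, so mark a black corner on the new face in the component containing the root's black vertex and a \emph{white} corner on the other new face (or compose with the canonical black-to-white corner correspondence along a face boundary); with ``a black corner on each new face'' taken literally, the inverse insertion is not defined, though the count $\alpha\beta$ is unaffected. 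Finally, in the face-merging case no extra record of ``where the old edge lay'' is needed: among the $\mu_1+\mu_i-1$ white corners of the merged face, exactly one reinsertion returns a face~1 of degree $\mu_1$ with the new edge as root edge, which is precisely why the coefficient is $(\mu_1+\mu_i-1)$ and nothing more.
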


It is well-known that dessins d'enfant correspond to pairs of permutations acting on the edges --- one permutation rotates edges around black vertices and the other rotates edges around white vertices. (Equivalent descriptions in the literature often use triples of permutations that compose to give the identity.) This leads to an expression for the disconnected enumeration of dessins d'enfant in terms of characters of the symmetric group, via Frobenius's formula. For details, we recommend the book of Lando and Zonkin~\cite{lan-zvo04}. To incorporate the parameter $t$, we observe that its exponent should equal the number of cycles in the permutation that rotates edges around black vertices. As a consequence of Jucys's results described in part (a) of \cref{prop:jucys}, we obtain the following result, for which we omit the proof.

\begin{proposition} \label{prop:dessinsrep}
The disconnected enumeration of dessins d'enfant is given by the formula
\[
D_{g,n}^{t\bullet}(\mu_1, \ldots, \mu_n) = \frac{1}{\mu_1 \cdots \mu_n} \, \big[ \h^{2g-2+n} \big] \sum_{\lambda \in \mathcal{P}} \chi^\lambda_\mu \, s_\lambda(\tfrac{1}{\h}, \tfrac{1}{\h}, \ldots) \prod_{\square \in \lambda} \left( t + \h \, c(\square) \right).
\]
\end{proposition}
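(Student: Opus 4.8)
The plan is to translate the weighted dessin d'enfant enumeration into a sum over pairs of permutations and then evaluate that sum by the character-theoretic argument that proves \cref{cor:characterformula}, with part~(a) of \cref{prop:jucys} producing the weight $t$. The starting point is the standard encoding of maps by permutations (see~\cite{lan-zvo04}): a genus $g$ dessin d'enfant $\Gamma$ with $k = |\mu|$ edges and $n$ faces of degrees $\mu_1, \ldots, \mu_n$ corresponds, up to simultaneous conjugation by $\SG_k$, to a pair $(\sigma_\bullet, \sigma_\circ) \in \SG_k \times \SG_k$ whose cycles index the black vertices, the white vertices and, for $\sigma_\infty := (\sigma_\bullet \sigma_\circ)^{-1}$, the faces, with cycle lengths equal to the corresponding degrees, together with a labelling of the cycles of $\sigma_\infty$ by $1, \ldots, n$ compatible with $\mu$. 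Here $b(\Gamma) = \#\mathrm{cyc}(\sigma_\bullet)$, the subgroup $\langle \sigma_\bullet, \sigma_\circ \rangle$ is transitive precisely when $\Gamma$ is connected, and Euler's formula (with the additive Euler characteristic convention of \cref{def:deformed}) gives $2g - 2 + n = k - \#\mathrm{cyc}(\sigma_\bullet) - \#\mathrm{cyc}(\sigma_\circ)$. Since weighting isomorphism classes by $1/|\mathrm{Aut}(\Gamma)|$ and summing equals summing over the associated set of labelled objects and dividing by $k!$, one obtains
\[
\sum_{g} \h^{2g-2+n} D^{t\bullet}_{g,n}(\mu_1, \ldots, \mu_n) = \frac{1}{k!} \sum_{(\sigma_\bullet, \sigma_\circ) \in \SG_k^2} L(\sigma_\infty) \, t^{\#\mathrm{cyc}(\sigma_\bullet)} \, \h^{\, k - \#\mathrm{cyc}(\sigma_\bullet) - \#\mathrm{cyc}(\sigma_\circ)},
\]
where $L(\sigma_\infty)$, the number of compatible cycle-labellings of $\sigma_\infty$, equals $\prod_j m_j!$ if $\sigma_\infty$ has cycle type $\mu$ --- with $m_j$ the number of parts of $\mu$ equal to $j$ --- and is $0$ otherwise; dropping transitivity gives the disconnected count at hand.

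Next I would perform the sum over $\sigma_\circ$ with $\sigma_\bullet$ fixed. Substituting $\sigma_\infty = (\sigma_\bullet\sigma_\circ)^{-1}$, which turns $\#\mathrm{cyc}(\sigma_\circ)$ into $\#\mathrm{cyc}(\sigma_\infty\sigma_\bullet)$, and using that $L$ is constant on $C_\mu$ and vanishes off it, the right-hand side becomes $\frac{\h^k \prod_j m_j!}{k!}$ times the central group-algebra quantity
\[
\sum_{\sigma_\bullet \in \SG_k} \sum_{\sigma_\infty \in C_\mu} \left( \frac{t}{\h} \right)^{\#\mathrm{cyc}(\sigma_\bullet)} \h^{-\#\mathrm{cyc}(\sigma_\infty \sigma_\bullet)}.
\]
By part~(a) of \cref{prop:jucys}, $\sum_{\sigma \in \SG_k} x^{\#\mathrm{cyc}(\sigma)} \sigma = \prod_{i=1}^k(x + J_i)$ lies in the centre, so this quantity is the value, under the pairing $\langle \sum a_\sigma \sigma, \sum b_\sigma \sigma \rangle = \sum_\sigma a_\sigma b_\sigma$ on $\mathbb{C}[\SG_k]$, of the two central elements $\prod_i(t/\h + J_i)$ and $\big(\sum_{\sigma \in C_\mu}\sigma\big)\prod_i(1/\h + J_i)$. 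Diagonalising over irreducible characters exactly as in the proof of \cref{cor:characterformula} --- part~(b) of \cref{prop:jucys} makes $\prod_i(x + J_i)$ act on the $\chi^\lambda$-component by the scalar $\prod_{\square \in \lambda}(x + c(\square))$, the class sum acts by $|C_\mu|\,\chi^\lambda_\mu/\chi^\lambda(\mathrm{id})$, and character orthogonality $\sum_\sigma \chi^\lambda(\sigma)\chi^{\lambda'}(\sigma) = k!\,\delta_{\lambda\lambda'}$ collapses the double partition sum --- shows the quantity above equals $\dfrac{|C_\mu|}{k!}\sum_{\lambda \vdash k}\chi^\lambda(\mathrm{id})\,\chi^\lambda_\mu\prod_{\square \in \lambda}\big(\tfrac t\h + c(\square)\big)\big(\tfrac1\h + c(\square)\big)$, and the emerging factor $|C_\mu|$ combines with $\prod_j m_j!$ via $|C_\mu|\prod_j m_j! = k!/(\mu_1 \cdots \mu_n)$.

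What remains is bookkeeping: substitute back, rewrite $\prod_\square(t/\h + c(\square)) = \h^{-k}\prod_\square(t + \h\,c(\square))$ and likewise the other factor, and recognise through the hook-content formula together with $\chi^\lambda(\mathrm{id}) = k!/\prod_\square h(\square)$ that
\[
s_\lambda\big(\tfrac1\h, \tfrac1\h, \ldots\big) = \prod_{\square \in \lambda}\frac{\tfrac1\h + c(\square)}{h(\square)} = \frac{\chi^\lambda(\mathrm{id})}{k!}\,\h^{-k}\prod_{\square \in \lambda}\big(1 + \h\,c(\square)\big);
\]
the surplus factor $\h^{-k}$ then converts extraction of the coefficient of $\h^{\, k + 2g - 2 + n}$ into extraction of the coefficient of $\h^{\, 2g-2+n}$, which is the formula of \cref{prop:dessinsrep}. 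I expect no genuine difficulty in the algebraic half, which is a near-verbatim repetition of \cref{cor:characterformula}; the step demanding care --- and the reason the computation is worth writing out --- is the first paragraph, namely checking that the automorphism weights $t^{b(\Gamma)}/|\mathrm{Aut}(\Gamma)|$, the factor $\prod_j m_j!$ of compatible face-labellings, the division by $k!$, and the normalisation $1/(\mu_1 \cdots \mu_n)$ assemble into precisely the stated constants.
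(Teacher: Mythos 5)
Your proposal is correct and follows exactly the route the paper indicates (and then omits): the encoding of dessins by pairs of permutations with the $1/|\mathrm{Aut}|$-versus-$1/k!$ labelled count, the weight $t^{\#\mathrm{cyc}(\sigma_\bullet)}$ handled through part (a) of \cref{prop:jucys}, Frobenius-type character diagonalisation, and the hook-content specialisation $s_\lambda\big|_{p_r = 1/\h} = \prod_{\square}\frac{1/\h + c(\square)}{h(\square)}$, with the constants $|C_\mu|\prod_j m_j! = k!/(\mu_1\cdots\mu_n)$ assembling as you say. The bookkeeping checks out (the $\h^{\pm k}$ factors cancel so that one extracts the coefficient of $\h^{2g-2+n}$ directly), so your write-up is a valid filling-in of the proof the paper leaves to the reader.
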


We are now in a position to prove that topological recursion governs 	the weighted enumeration of dessins d'enfant.\footnote{Topological recursion for a more general weighted enumeration of dessins d'enfant was previously studied by Kazarian and Zograf~\cite{kaz-zog15}, although they obtain a different form for the spectral curve.}

\begin{theorem} \label{thm:dessinTR}
Topological recursion on the spectral curve
\[
\mathcal{C} = \mathbb{CP}^1, \quad x(z) = \frac{1 - z + tz}{z(1-z)}, \quad y(z) = z, \quad \omega_{0,2} = \frac{\dd z_1 \, \dd z_2}{(z_1-z_2)^2}
\]
produces correlation differentials satisfying
\[
\omega_{g,n}(z_1, \ldots, z_n) = \dd_1 \cdots \dd_n \sum_{\mu_1, \ldots, \mu_n=1}^{\infty} D^t_{g,n}(\mu_1, \ldots, \mu_n) \, x(z_1)^{-\mu_1} \cdots x(z_n)^{-\mu_n} + \delta_{g,0} \delta_{n,2} \, \frac{\dd x(z_1) \, \dd x(z_2)}{\left( x(z_1) - x(z_2) \right)^2}.
\]
\end{theorem}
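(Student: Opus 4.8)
The plan is to mirror the proof of \cref{thm:TR}: we recognise the disconnected generating function for the weighted dessin d'enfant enumeration as a KP tau function of hypergeometric type, and then invoke the theorem of Bychkov, Dunin-Barkowski, Kazarian and Shadrin~\cite{BDKS21}, which builds on~\cite{ACEH20}, to read off the governing spectral curve. The starting point is \cref{prop:dessinsrep}, which already presents $D^{t\bullet}_{g,n}(\mu_1, \ldots, \mu_n)$ in the requisite Schur-function form: the content weight is $\prod_{\square \in \lambda} (t + \h \, c(\square))$ and the second Schur argument is the all-power-sums specialisation $s_\lambda(\tfrac{1}{\h}, \tfrac{1}{\h}, \ldots)$, which corresponds to the Miwa data $\tilde{y}(z) = \sum_{k \geq 1} z^k = \tfrac{z}{1-z}$. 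Summing \cref{prop:dessinsrep} over cycle types, exactly as in the first half of the proof of \cref{thm:TR}, exhibits $\sum_{g,n} \tfrac{\h^{2g-2+n}}{n!} \sum_\mu D^{t\bullet}_{g,n}(\mu) \, p_{\mu_1} \cdots p_{\mu_n}$ as a tau function $Z$ of the form appearing in~\cite{BDKS21}, up to normalisation.

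The first technical step is to normalise the content weight so that it equals $1$ at the origin, as the hypothesis of~\cite{BDKS21} requires: writing $t + \h c = t \bigl(1 + \tfrac{\h}{t} c\bigr)$ and using the homogeneity of $s_\lambda$ (degree $|\lambda|$ in its variables, equivalently $p_k \mapsto \alpha^k p_k$ multiplies by $\alpha^{|\lambda|}$) to absorb the factor $t^{|\lambda|}$ into a rescaling of $\tilde{y}$, we arrive at the data $f(w) = 1 + \tfrac{w}{t}$ together with a suitably rescaled $\tilde{y}$. The second step is to feed this data into the BDKS theorem; since the dessin enumeration is a ``map-type'' problem --- its correlation differentials expand in negative powers $x(z)^{-\mu_i}$ rather than in $x(z)^{\mu_i}$ --- one must invoke the version of the statement appropriate to that regime, in which $x$ acquires a pole, rather than a zero, at the distinguished point. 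The third step is to simplify the resulting rational expressions for $x(z)$ and $y(z)$ and to check that, possibly after a Möbius reparametrisation of $\mathbb{CP}^1$ (to which topological recursion is insensitive), they coincide with $x(z) = \tfrac{1-z+tz}{z(1-z)}$ and $y(z) = z$; one then verifies that $\omega_{0,1}(z_1) = y(z_1) \, \dd x(z_1)$ and $\omega_{0,2}$ reproduce the $\delta_{g,0}\delta_{n,2}$ correction term as in \cref{thm:TR}, and that the content formula coming out of the specialised BDKS statement is precisely the formula of \cref{prop:dessinsrep}, so that the weighted Hurwitz numbers it produces agree with $D^{t\bullet}_{g,n}$. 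As an independent consistency check, one can compare with the spectral curve obtained by Kazarian and Zograf~\cite{kaz-zog15} for their more general weighted dessin enumeration, specialised to our weights; the two curves should be related by a rational change of coordinate.

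The main obstacle is the second step: invoking the correct ``map-type'' form of the BDKS/\cite{ACEH20} machinery and carrying out the normalisation bookkeeping --- the $f(0) = 1$ adjustment, the accompanying rescaling of $\tilde{y}$, and the placement of powers of $\h$ --- so that the spectral curve emerges in exactly the stated parametrised form. The combinatorial and representation-theoretic input is already in place via \cref{prop:dessinsrep,prop:jucys}, so the remaining work is essentially a matching of conventions between our setup and the hypergeometric-tau-function theorem being cited, requiring no genuinely new ideas beyond those used for \cref{thm:TR}.
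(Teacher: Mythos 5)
Your first half coincides with the paper's proof: recognise, via \cref{prop:dessinsrep}, the disconnected weighted dessin enumeration as a hypergeometric KP tau function with $\tilde{y}(z) = \tfrac{z}{1-z}$ and content weight $t + \h\,c(\square)$, then apply the theorem of~\cite{BDKS21}. Your normalisation $t + \h c = t(1 + \tfrac{\h}{t}c)$, absorbing $t^{|\lambda|}$ into the Schur specialisation, is legitimate and in fact reproduces (after the M\"obius change $z \mapsto z/t$) exactly the intermediate curve the paper obtains by taking $f(z) = t+z$ directly, namely $x(z) = \tfrac{z(1-z)}{t-tz+z}$, $y(z) = \tfrac{t-tz+z}{(1-z)^2}$, with correlation differentials expanded in \emph{positive} powers of $x$ at $x = 0$.

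The gap is in your second step. The statement in \cref{thm:dessinTR} stores the enumeration in \emph{negative} powers $x(z_i)^{-\mu_i}$, i.e.\ as an expansion at $x = \infty$, and with the specific parametrisation $x(z) = \tfrac{1-z+tz}{z(1-z)}$, $y(z) = z$. You dispose of this by proposing to ``invoke the version of the statement appropriate to that regime, in which $x$ acquires a pole,'' but no such variant is part of the result as cited (the simplified statement used for \cref{thm:TR} only produces expansions in positive powers of $x$), and treating the passage as mere convention-matching understates what is needed. The paper closes this gap with a substantive argument: starting from the intermediate curve it performs three explicit transformations, $(x,y,t) \mapsto (x^{-1}, x^2 y, t)$ to move the expansion point from $0$ to $\infty$, then $(x,y,t) \mapsto (x, y+tx, t)$, which leaves the correlation differentials unchanged by~\cite[Section~4.2]{eyn-ora09}, and finally $(x,y,t) \mapsto (tx, y, t^{-1})$, which preserves the expansion coefficients only because the coefficients of $D^t_{g,n}(\mu_1,\ldots,\mu_n)$ are palindromic in $t$ (the proposition preceding the cut-and-join recursion for dessins) combined with the homogeneity property of topological recursion~\cite[Section~4.1]{eyn-ora09}. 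Your proposal neither supplies these transformations nor identifies the symmetry of the coefficients as an input, so the identification of the stated spectral curve and of the $x^{-\mu}$ expansion is not established; the appeal to~\cite{kaz-zog15} is a consistency check, not a substitute for this step.
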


\begin{proof}
We again invoke the result of Bychkov, Dunin-Barkowski, Kazarian and Shadrin~\cite{BDKS21}, noting that the more restricted result of Alexandrov, Chapuy, Eynard and Harnad~\cite{ACEH20} would suffice in this particular context. See the proof of \cref{thm:TR} above for a statement of the result.

The representation-theoretic interpretation of the dessin d'enfant enumeration given in \cref{prop:dessinsrep} implies that they are weighted Hurwitz numbers with the choice of data $\tilde{y}(z) = \sum_{i=1}^\infty z^i = \frac{z}{1-z}$ and $f(z) = t + z$. Hence, topological recursion on the spectral curve
\[
\mathcal{C} = \mathbb{CP}^1, \quad x(z) = \frac{z}{f(\tilde{y}(z))} = \frac{z(1-z)}{t-tz+z}, \quad y(z) = \frac{\tilde{y}(z)}{x(z)} = \frac{t-tz+z}{(1-z)^2}, \quad \omega_{0,2}(z_1, z_2) = \frac{\dd z_1 \, \dd z_2}{(z_1 - z_2)^2}
\]
produces correlation differentials satisfying
\[
\omega_{g,n}(z_1, \ldots, z_n) = \dd_1 \cdots \dd_n \sum_{\mu_1, \ldots, \mu_n=1}^{\infty} D^t_{g,n}(\mu_1, \ldots, \mu_n) \, x(z_1)^{\mu_1} \cdots x(z_n)^{\mu_n} + \delta_{g,0} \delta_{n,2} \, \frac{\dd x(z_1) \, \dd x(z_2)}{\left( x(z_1) - x(z_2) \right)^2}.
\]

This does not yet match our spectral curve, since the enumeration is stored as an expansion at $x(z) = 0$ rather than at $x(z) = \infty$. We obtain the desired spectral curve by performing the following three transformations in order:
\[
(x, y, t) \mapsto (x^{-1}, x^2y, t), \qquad (x, y, t) \mapsto (x, y+tx, t), \qquad (x, y, t) (tx, y, t^{-1}).
\]
The first transformation changes the expansion at $x(z) = 0$ to an expansion at $x(z) = \infty$; the second transformation preserves the correlation differentials~\cite[Section~4.2]{eyn-ora09}; and the third transformation preserves the coefficients of the expansion, due to the symmetry of the coefficients of $D^t_{g,n}(\mu_1, \ldots, \mu_n)$ and the homogeneity property of topological recursion~\cite[Section~4.1]{eyn-ora09}. Thus, we arrive at the desired result.
\end{proof}

Although the enumeration of dessins d'enfant has been studied previously~\cite{kaz-zog15}, the following conjectural properties have not yet been observed in the literature, to the best of our knowlege.

\begin{conjecture} [Real-rootedness and interlacing] \label{con:dessins}
For all $g \geq 0$, $n \geq 1$ and $\mu_1, \ldots, \mu_n \geq 1$, the dessin d'enfant enumeration $D_{g,n}^t(\mu_1, \mu_2, \ldots, \mu_n)$ is a real-rooted polynomial in $t$. Furthermore, the polynomial $D_{g,n}^t(\mu_1, \mu_2, \ldots, \mu_n)$ interlaces each of the $n$ polynomials
\[
D_{g,n}^t(\mu_1+1, \mu_2, \ldots, \mu_n), \quad D_{g,n}^t(\mu_1, \mu_2+1, \ldots, \mu_n), \quad \ldots, \quad D_{g,n}^t(\mu_1, \mu_2, \ldots, \mu_n+1).
\]
\end{conjecture}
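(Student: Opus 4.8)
The plan is to mirror the strategy of \cref{prop:genus01}: settle the one-point cases in low genus using \cref{lem:interlacing}, and treat the general case as the principal obstruction. The one new tool required is a linear one-point recursion for the dessin d'enfant enumeration $D^t_{g,1}(d)$, analogous to that of \cref{thm:onepoint}. Such a recursion can be produced either from the representation-theoretic formula of \cref{prop:dessinsrep} by the algorithmic method of Chaudhuri and the second author~\cite{cha-do21}, or directly from the topological recursion of \cref{thm:dessinTR} by analysing $\omega_{g,1}$ on the genus-zero spectral curve $xy^2 - xy + (t-1)y + 1 = 0$; either route should also yield the explicit three-term recursions needed below.

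\textbf{The case $(g,n)=(0,1)$.} By \cref{thm:dessinTR} we have $\omega_{0,1}(z) = y(z)\,\dd x(z)$ on the curve $x(z) = \frac{1-z+tz}{z(1-z)}$, $y(z) = z$, and comparing with the generating-function identity $y(x) = \sum_{\mu \geq 0} \mathrm{Nar}_\mu(t)\,x^{-\mu-1}$ recorded in the discussion preceding \cref{thm:dessinTR} shows that $D^t_{0,1}(\mu)$ agrees, up to the obvious normalisation, with $\mathrm{Nar}_{\mu-1}(t)$. Thus the $(0,1)$ case of \cref{con:dessins} reduces to the classical facts that each Narayana polynomial is real-rooted and that $\mathrm{Nar}_\mu$ interlaces $\mathrm{Nar}_{\mu+1}$, both of which are in Fisk~\cite{fis06}; no further work is needed.

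\textbf{The case $(g,n)=(1,1)$.} Following the template of \cref{prop:genus01}, one first conjectures, from computed data, an explicit three-term recursion for $d\,D^t_{1,1}(d)$; after dividing by the appropriate factor and, if necessary, factoring out a power of $t$ so that the resulting sequence $f_1(t), f_2(t), \ldots$ has non-negative coefficients with $\deg f_i = i$, it should take the form of \cref{eq:interlacingrecursion} with $\ell_d$ affine and $q_d$ quadratic. One then verifies the recursion rigorously: compute $w_{1,1}(x) = \omega_{1,1}(z)/\dd x(z)$ as an explicit rational function of $z$ via topological recursion, translate the recursion into a linear ODE for $w_{1,1}(x)$, and check it in a computer algebra system, exactly as in the derivation of \cref{eq:w11}. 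Finally one checks the hypotheses of \cref{lem:interlacing}, namely that $q_d(t) > 0$ for $t \leq 0$ and that $f_1$ strictly interlaces $f_2$; these are finite verifications.

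\textbf{The main obstacle.} The genuinely difficult part is everything else: the cases $n \geq 2$, and the one-point cases in genus $g \geq 2$. For $n \geq 2$ the only available structural tool is the cut-and-join recursion of \cref{prop:dessincutjoin}, and although it respects the grading by degree, its quadratic (disconnected) terms involve \emph{products} of the polynomials in question; since products of real-rooted polynomials need not be real-rooted, there is no naive closure argument, and a successful proof would presumably require exhibiting a common interleaver for the relevant families or an interlacing-preserving linear operator compatible with the recursion. For $g \geq 2$, $n = 1$, one expects — by analogy with the obstruction recorded in the remark after \cref{prop:genus01} — that no one-point recursion of the simple three-term shape demanded by \cref{lem:interlacing} exists, so even this case needs a new idea. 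I therefore expect \cref{con:dessins} to remain open in general, just as \cref{con:realrooted,con:interlacing} do for the deformed monotone Hurwitz numbers, with the one-point, low-genus cases being the only parts amenable to the present techniques.
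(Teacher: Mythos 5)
The statement you are addressing is \cref{con:dessins}, which the paper leaves as a conjecture: it offers no proof, only extensive numerical evidence generated from the cut-and-join recursion of \cref{prop:dessincutjoin} and the topological recursion of \cref{thm:dessinTR}. So the honest framing of your proposal --- partial results in the one-point, low-genus cases plus a clear statement of the obstruction for $n \geq 2$ and $g \geq 2$ --- is consistent with the paper's own position, and your diagnosis of why the cut-and-join recursion cannot give a naive closure argument for real-rootedness (its quadratic terms are products, and interlacing is not preserved under such operations the way the $\Lambda$-polynomial property is in \cref{prop:symmetry}) is accurate. Your $(0,1)$ case is correct: from $\omega_{0,1} = y\,\dd x$ on the curve of \cref{thm:dessinTR} one gets $\mu \, D^t_{0,1}(\mu) = \mathrm{Nar}_{\mu}(t)$ (note the index: it is $\mathrm{Nar}_\mu$, not $\mathrm{Nar}_{\mu-1}$ --- compare the shift in \cref{eq:hurwitznarayana} for the monotone case, which is absent here; this is visible in the data tables and is harmless for the interlacing conclusion), and real-rootedness and interlacing of consecutive Narayana polynomials are classical.

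The genuine gap, even relative to the modest partial claims you make, is the $(1,1)$ case. Unlike \cref{prop:genus01}, where the paper proves the needed genus-one three-term recursion by verifying an explicit differential equation against the closed form of $w_{1,1}$, you have not exhibited the analogous recursion for $D^t_{1,1}(d)$, nor checked that after the proposed normalisation the hypotheses of \cref{lem:interlacing} hold (non-negative coefficients, $\deg f_i = i$, $q_d(t) > 0$ for $t \leq 0$, and $f_1$ strictly interlacing $f_2$). These are all plausibly verifiable by the route you describe, but until the recursion is written down and checked, the $(1,1)$ case is a programme rather than a proof; note also that the remark following \cref{prop:genus01} warns that such three-term recursions can simply fail to exist (as happens for the deformed monotone Hurwitz numbers at genus $2$), so existence at genus $1$ for the dessin enumeration must be established, not assumed. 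As for the conjecture in full generality, neither you nor the paper proves it, and your assessment that it remains open is the correct one.
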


As with the deformed monotone Hurwitz numbers, we can effectively calculate $D_{g,n}^t(\mu_1, \ldots, \mu_n)$ using the cut-and-join recursion of \cref{prop:dessincutjoin} and explicitly check for real-rootedness and interlacing. Again, one finds a wealth of data to support \cref{con:dessins}. We consider this as evidence that the real-rootedness and interlacing observed for deformed monotone Hurwitz numbers is not simply a sporadic artefact, but may be a more widespread phenomenon with a deep reason underlying it.

\begin{remark}
As previously mentioned, dessins d'enfant correspond to pairs of permutations acting on the edges, or equivalently, triples of permutations that compose to give the identity. It follows from \cref{prop:jucys} that each permutation has a unique {\em strictly monotone factorisation}, defined analogously to a monotone factorisation in \cref{def:monotone}, but with the stronger monotonicity requirement that $b_1 < b_2 < \cdots < b_r$. Using this result on the permutation that rotates edges around black vertices leads to the fact that the enumeration of dessins d'enfant can be interpreted as strictly monotone Hurwitz numbers.\end{remark}

\appendix

\section{Data} \label{app:data}

\subsubsection*{Deformed monotone Hurwitz numbers}

The deformed monotone Hurwitz numbers can be computed using the cut-and-join recursion or the topological recursion. These were both implemented in SageMath to produce the following table~\cite{sagemath}.

\begin{center}
\begin{tabularx}{\textwidth}{cXX} \toprule
$(\mu_1, \ldots, \mu_n)$ & $\mu_1 \cdots \mu_n \, \HN{0}{n}{\mu_1, \ldots, \mu_n}$ & $\mu_1 \cdots \mu_n \, \HN{1}{n}{\mu_1, \ldots, \mu_n}$ \\ \midrule
$(1)$ & $1$ & $0$ \\
$(2)$ & $t$ & $t$ \\
$(3)$ & $t^2 + t$ & $5t^2 + 5t$ \\
$(4)$ & $t^3 + 3t^2 + t$ & $15t^3 + 40t^2 + 15t$ \\
$(5)$ & $t^4 + 6t^3 + 6t^2 + t$ & $35t^4 + 175t^3 + 175t^2 + 35t$ \\
$(6)$ & $t^5 + 10t^4 + 20t^3 + 10t^2 + t$ & $70t^5 + 560t^4 + 1050t^3 + 560t^2 + 70t$ \\ \midrule
$(1,1)$ & $t$ & $t$ \\
$(2,1)$ & $2t^2 + 2t$ & $10t^2 + 10t$ \\
$(3,1)$ & $3t^3 + 9t^2 + 3t$ & $45t^3 + 120t^2 + 45t$ \\
$(2,2)$ & $4t^3 + 10t^2 + 4t$ & $50t^3 + 128t^2 + 50t$ \\
$(4,1)$ & $4t^4 + 24t^3 + 24t^2 + 4t$ & $140t^4 + 700t^3 + 700t^2 + 140t$ \\
$(3,2)$ & $6t^4 + 30t^3 + 30t^2 + 6t$ & $168t^4 + 792t^3 + 792t^2 + 168t$ \\
$(5,1)$ & $5t^5 + 50t^4 + 100t^3 + 50t^2 + 5t$ & $350t^5 + 2800t^4 + 5250t^3 + 2800t^2 + 350t$ \\
$(4,2)$ & $8t^5 + 68t^4 + 128t^3 + 68t^2 + 8t$ & $448t^5 + 3348t^4 + 6128t^3 + 3348t^2 + 448t$ \\
$(3,3)$ & $9t^5 + 72t^4 + 138t^3 + 72t^2 + 9t$ & $462t^5 + 3432t^4 + 6312t^3 + 3432t^2 + 462t$ \\ \midrule
$(1,1,1)$ & $4t^2 + 4t$ & $20t^2 + 20t$ \\
$(2,1,1)$ & $10t^3 + 28t^2 + 10t$ & $140t^3 + 368t^2 + 140t$ \\
$(3,1,1)$ & $18t^4 + 102t^3 + 102t^2 + 18t$ & $588t^4 + 2892t^3 + 2892t^2 + 588t$ \\
$(2,2,1)$ & $24t^4 + 120t^3 + 120t^2 + 24t$ & $672t^4 + 3168t^3 + 3168t^2 + 672t$ \\
$(4,1,1)$ & $28t^5 + 268t^4 + 528t^3 + 268t^2 + 28t$ & $1848t^5 + 14548t^4 + 27128t^3 + 14548t^2 + 1848t$ \\
$(3,2,1)$ & $42t^5 + 348t^4 + 660t^3 + 348t^2 + 42t$ & $2268t^5 + 16908t^4 + 31008t^3 + 16908t^2 + 2268t$ \\
$(2,2,2)$ & $56t^5 + 424t^4 + 768t^3 + 424t^2 + 56t$ & $2688t^5 + 19128t^4 + 34416t^3 + 19128t^2 + 2688t$ \\ \bottomrule
\end{tabularx}
\end{center}

~

\begin{center}
\begin{tabularx}{\textwidth}{cXX} \toprule
$(\mu_1, \ldots, \mu_n)$ & $\mu_1 \cdots \mu_n \, \HN{2}{n}{\mu_1, \ldots, \mu_n}$ & $\mu_1 \cdots \mu_n \, \HN{3}{n}{\mu_1, \ldots, \mu_n}$ \\ \midrule
(1) & $0$ & $0$ \\
(2) & $t$ & $t$ \\
(3) & $21t^2 + 21t$ & $85t^2 + 85t$ \\
(4) & $161t^3 + 413t^2 + 161t$ & $1555t^3 + 3930t^2 + 1555t$ \\
(5) & $777t^4 + 3612t^3 + 3612t^2 + 777t$ & $14575t^4 + 65505t^3 + 65505t^2 + 14575t$ \\ \midrule
(1, 1) & $t$ & $t$ \\
(2, 1) & $42t^2 + 42t$ & $170t^2 + 170t$ \\
(3, 1) & $483t^3 + 1239t^2 + 483t$ & $4665t^3 + 11790t^2 + 4665t$ \\
(2, 2) & $504t^3 + 1278t^2 + 504t$ & $4750t^3 + 11956t^2 + 4750t$ \\
(4, 1) & $3108t^4 + 14448t^3 + 14448t^2 + 3108t$ & $58300t^4 + 262020t^3 + 262020t^2 + 58300t$ \\
(3, 2) & $3402t^4 + 15450t^3 + 15450t^2 + 3402t$ & $61116t^4 + 271764t^3 + 271764t^2 + 61116t$ \\ \midrule
(1, 1, 1) & $84t^2 + 84t$ & $340t^2 + 340t$ \\
(2, 1, 1) & $1470t^3 + 3756t^2 + 1470t$ & $14080t^3 + 35536t^2 + 14080t$ \\
(3, 1, 1) & $12726t^4 + 58794t^3 + 58794t^2 + 12726t$ & $236016t^4 + 1057824t^3 + 1057824t^2 + 236016t$ \\
(2, 2, 1) & $13608t^4 + 61800t^3 + 61800t^2 + 13608t$ & $244464t^4 + 1087056t^3 + 1087056t^2 + 244464t$ \\ \bottomrule
\end{tabularx}
\end{center}

\subsubsection*{Dessin d'enfant enumeration}

The weighted dessin d'enfant enumeration can be computed using the cut-and-join recursion or the topological recursion. These were both implemented in SageMath to produce the following table~\cite{sagemath}.

\begin{center}
\begin{tabularx}{\textwidth}{cXX} \toprule
$(\mu_1, \ldots, \mu_n)$ & $\mu_1 \cdots \mu_n \, D_{0,n}^t(\mu_1, \ldots, \mu_n)$ & $\mu_1 \cdots \mu_n \, D_{1,n}^t(\mu_1, \ldots, \mu_n)$ \\ \midrule
$(1)$ & $t$ & $0$ \\
$(2)$ & $t^2 + t$ & $0$ \\
$(3)$ & $t^3 + 3t^2 + t$ & $t$ \\
$(4)$ & $t^4 + 6t^3 + 6t^2 + t$ & $5t^2 + 5t$ \\
$(5)$ & $t^5 + 10t^4 + 20t^3 + 10t^2 + t$ & $15t^3 + 40t^2 + 15t$ \\
$(6)$ & $t^6 + 15t^5 + 50t^4 + 50t^3 + 15t^2 + t$ & $35t^4 + 175t^3 + 175t^2 + 35t$ \\ \midrule
$(1,1)$ & $t$ & $0$ \\
$(2,1)$ & $2t^2 + 2t$ & $0$ \\
$(3,1)$ & $3t^3 + 9t^2 + 3t$ & $3t$ \\
$(2,2)$ & $4t^3 + 10t^2 + 4t$ & $2t$ \\
$(4,1)$ & $4t^4 + 24t^3 + 24t^2 + 4t$ & $20t^2 + 20t$ \\
$(3,2)$ & $6t^4 + 30t^3 + 30t^2 + 6t$ & $18t^2 + 18t$ \\
$(5,1)$ & $5t^5 + 50t^4 + 100t^3 + 50t^2 + 5t$ & $75t^3 + 200t^2 + 75t$ \\
$(4,2)$ & $8t^5 + 68t^4 + 128t^3 + 68t^2 + 8t$ & $80t^3 + 200t^2 + 80t$ \\
$(3,3)$ & $9t^5 + 72t^4 + 138t^3 + 72t^2 + 9t$ & $75t^3 + 198t^2 + 75t$ \\ \midrule
$(1,1,1)$ & $2t$ & $0$ \\
$(2,1,1)$ & $6t^2 + 6t$ & $0$ \\
$(3,1,1)$ & $12t^3 + 36t^2 + 12t$ & $12t$ \\
$(2,2,1)$ & $16t^3 + 40t^2 + 16t$ & $8t$ \\
$(4,1,1)$ & $20t^4 + 120t^3 + 120t^2 + 20t$ & $100t^2 + 100t$ \\
$(3,2,1)$ & $30t^4 + 150t^3 + 150t^2 + 30t$ & $90t^2 + 90t$ \\
$(2,2,2)$ & $40t^4 + 176t^3 + 176t^2 + 40t$ & $80t^2 + 80t$ \\ \bottomrule
\end{tabularx}
\end{center}

\subsubsection*{Weingarten functions}

The value of $\SW(\sigma)$ can be calculated by inverting the orthogonality relations or by the character formula. The value of $\UW(\sigma)$ is the leading coefficient of $\SW(\sigma)$, considered as a polynomial in $M$.

\begin{center}
\begin{tabularx}{\textwidth}{cp{40mm}X} \toprule
$\sigma$ & $\UW(\sigma)$ & $\SW(\sigma)$ \\ \midrule
$(\,)$ & $1$ & $1$ \\ \midrule
$(1)$ & $\frac{1}{N}$ & $\frac{M}{N}$ \\ \midrule
$(1)(2)$ & $\frac{1}{N^2-1}$ & $\frac{M(MN-1)}{N (N^2-1)}$ \\
$(12)$ & $\frac{-1}{N(N^2-1)}$ & $\frac{-M(M - N)}{N (N^2-1)}$ \\ \midrule
$(1)(2)(3)$ & $\frac{N^2-2}{N (N^2-1) (N^2-4)}$ & $\frac{M(M^2N^2 - 2M^2 - 3MN + 4)}{N (N^2-1) (N^2-4)}$ \\
$(12)(3)$ & $\frac{-1}{(N^2-1) (N^2-4)}$ & $\frac{-M (M-N) (MN-2)}{N (N^2-1) (N^2-4)}$ \\
$(123)$ & $\frac{2}{N (N^2-1) (N^2-4)}$ & $\frac{M (M-N) (2M-N)}{N (N^2-1) (N^2-4)}$ \\ \midrule
$(1)(2)(3)(4)$ & $\frac{N^4 - 8N^2 + 6}{N^2 (N^2-1) (N^2-4) (N^2-9)}$ & $\frac{M(M^3N^4 - 8M^3N^2 + 6M^3 - 6M^2N^3 + 24M^2N + 19MN^2 - 6M - 30N)}{N^2 (N^2-1) (N^2-4) (N^2-9)}$ \\
$(12)(3)(4)$ & $\frac{-1}{N (N^2-1) (N^2-9)}$ & $\frac{-M(M - N)(M^2N^2 - 4M^2 - 5MN + 10)}{N (N^2-1) (N^2-4) (N^2-9)}$ \\
$(12)(34)$ & $\frac{N^2 + 6}{N^2 (N^2-1) (N^2-4) (N^2-9)}$ & $\frac{M(M - N)(M^2N^2 + 6M^2 - MN^3 - 6MN + 4N^2 - 6)}{N^2 (N^2-1) (N^2-4) (N^2-9)}$ \\
$(123)(4)$ & $\frac{2N^2 - 3}{N^2 (N^2-1) (N^2-4) (N^2-9)}$ & $\frac{M(M - N)(2M^2N^2 - 3M^2- MN^3 - 6MN + 3N^2 + 3)}{N^2 (N^2-1) (N^2-4) (N^2-9)}$ \\
$(1234)$ & $\frac{-5}{N (N^2-1) (N^2-4) (N^2-9)}$ & $\frac{-M(M - N)(5M^2 - 5MN + N^2 + 1)}{N (N^2-1) (N^2-4) (N^2-9)}$ \\ \bottomrule
\end{tabularx}
\end{center}

\bibliographystyle{plain}
\bibliography{deformed-monotone-hurwitz}

\end{document}